\newtheorem*{rep@theorem}{\rep@title}
\newcommand{\newreptheorem}[2]{%
	\newenvironment{rep#1}[1]{%
		\def\rep@title{#2 \ref{##1}}%
		\begin{rep@theorem}}%
		{\end{rep@theorem}}}
\newtheorem{theorem}{Theorem}[section]
\newtheorem{lemma}[theorem]{Lemma}
\newtheorem{corollary}[theorem]{Corollary}
\newtheorem{proposition}[theorem]{Proposition}
\theoremstyle{definition}
\newtheorem{remark}[theorem]{Remark}
\newtheorem{definition}[theorem]{Definition}
\newtheorem{example}[theorem]{Example}
\newtheorem{conjecture}[theorem]{Conjecture}
\definecolor{ao}{rgb}{0.0, 0.5, 0.0}
\definecolor{lasallegreen}{rgb}{0.0, 0.3, 0.0}
\let\oldnorm\norm
\def\norm{\@ifstar{\oldnorm}{\oldnorm*}}
\newcommand{\pa} {\partial}
\newcommand{\De} {\Delta}
\newcommand{\Om} {\Omega}
\newcommand{\la} {\lambda}
\newcommand{\si} {\sigma}
\newcommand{\Lom} {\mathcal{L}}
\newcommand{\R}{{\mathbb R}}
\newcommand{\N}{{\mathbb N}}
\numberwithin{equation}{section}
\setlist{nosep}
\begin{document}
\singlespacing

\title[Reverse Faber-Krahn inequalities for the Logarithmic potential operator]{Reverse Faber-Krahn inequalities for the Logarithmic potential operator}

\author[T.V.~Anoop]{T.V.~Anoop}
\author[Jiya Rose Johnson]{Jiya Rose Johnson}

\address[T.V.~Anoop]{\newline\indent
	Department of Mathematics,
	Indian Institute of Technology Madras, 
	\newline\indent
	Chennai 36, India
	\newline\indent
	\orcidlink{0000-0002-2470-9140} 0000-0002-2470-9140 
}
\email{anoop@iitm.ac.in}
\address[Jiya Rose Johnson]{\newline\indent
	Department of Mathematics,
	Indian Institute of Technology Madras, 
	\newline\indent
	Chennai 36, India
}
\email{jiyarosejohnson@gmail.com}








\subjclass[2020]{
    35P05, 
    35P15, 
    47G40, 
    47A75 
   }
\keywords{Logarithmic potential operator, Reverse Faber-Krahn inequalities, Riesz potential operator, Polarization, Transfinite diameter}

\begin{abstract} For a bounded open set $\Omega \subset \R^2,$ we consider the largest eigenvalue $\tau_1(\Om)$ of the Logarithmic potential operator $\Lom$. If $diam(\Omega)\le 1$, we prove reverse Faber-Krahn type inequalities for  $\tau_1(\Om)$  under polarization and Schwarz symmetrization. Further, we establish the monotonicity of $\tau_1(\Om\setminus\mathcal{O})$ with respect to certain translations and rotations of the obstacle $\mathcal{O}$ within $\Omega$. The analogous results are also stated for the largest eigenvalue of the Riesz potential operator. Furthermore, we investigate properties of the smallest eigenvalue $\Tilde{\tau}_1(\Om)$ for a domain whose transfinite diameter is greater than 1. Finally, we characterize the eigenvalues of $\Lom$ on $B_R$, including the  $\Tilde{\tau}_1(B_R)$ when $R>1$.
\end{abstract}

\maketitle
\definecolor{lblack}{gray}{0.3}
\definecolor{mygray}{gray}{0.9}
\definecolor{vlgray}{gray}{0.96}
\definecolor{medgray}{gray}{0.8}
\definecolor{dgray}{gray}{0.7}
\begin{quote}	
	\setcounter{tocdepth}{1}
	\tableofcontents
	\addtocontents{toc}{\vspace*{0ex}}
\end{quote}
\section{Introduction}
\par In 1877, Lord Rayleigh, in his famous book \textit{"The theory of sound\cite{rayleigh}"} conjectured that \textit{"among all planar domains of a given area, disc minimizes the first Dirichlet eigenvalue"}. This conjecture was independently proved by Faber  \cite{faber}  and Krahn\cite{krahn1}. Later, Krahn \cite{krahn2}  extended this result for the domains in higher dimensions. For a bounded domain $\Omega$ in $\R^N$, let $\lambda_1(\Omega)$ be denote the first Dirichlet eigenvalue of Laplacian on $\Omega$. 
Then, Faber-Krahn inequality states that
\begin{equation}\label{FK}
    \la_1(\Omega^*)\leq \la_1(\Omega),
\end{equation}
where $\Omega^*$ is the open ball centred at the origin with the same measure as $\Omega$. The above inequality is known as Faber-Krahn inequality. 
For $N=2$, Faber and Krahn also proved that equality holds in \eqref{FK} only when $\Omega$ is a disc. For $N\ge 3$,  Kawohl \cite{kawohl1985} and Kesavan \cite{kes-fk-unique} extended this result to smooth domains,  and Daners-Kennedy \cite{fk-unique-2007}  for general bounded domains. A simple proof \eqref{FK} can be given using Schwarz symmetrization and P\'{o}lya-Szeg\"{o}  inequality\cite{polyaszego}. 
It states that, for $1\leq p<\infty$ and for a non-negative function $u\in W_0^{1,p}(\Omega),$
\begin{equation}\label{polyaszego}
    \int_{\Omega^*} |\nabla u^*|^p dx\leq \int_\Omega |\nabla u|^p dx,
\end{equation}
where $u^*$ is the Schwarz symmetrization of the function $u$. For various proofs of \eqref{polyaszego}, see \cite{bandle_c},\cite{keijohilden},\cite{liebpolyaszego}. 

 \par In \cite{Anoop-Ashok2023}, authors proved a Faber-Krhan type inequality for the first Dirichlet eigenvalue of $p$-Laplacian on $\Omega$ and its Polarization. The Polarization is one of the simplest rearrangements on $\R^N$. Polarization for sets was introduced by Wolontis\cite{wolontis}, and later Ahlfors\cite{ahlfors}  advanced this rearrangement for function transformations. For more details on polarization, refer to \cite{brockpol}, \cite{Burchard2009ASC}, \cite{dubinin}, and \cite{a_y_solynin}.  
 \begin{definition}{\textbf{(Polarization).}}
    A polarizer $H$ is an open affine halfspace in $\mathbb{R}^N.$ Let us denote the reflection with respect to the boundary $\partial H$ by $\sigma_H$. For a set $\Omega$ in $\R^N,$  the polarization $P_H(\Omega)$ with respect to $H$ is defined as below:
    \begin{equation*}
        P_H(\Omega)=[(\Omega\cup \sigma_H(\Omega))\cap H]\cup [\Omega\cap\sigma_H(\Omega)]
    \end{equation*}
\end{definition}
 A polarization $P_H$ preserves the Lebesgue measure and the set inclusions, and it takes open sets to open sets and closed sets to closed sets.
 \\
 \begin{minipage}[t]{0.475\textwidth} 
      \begin{center}
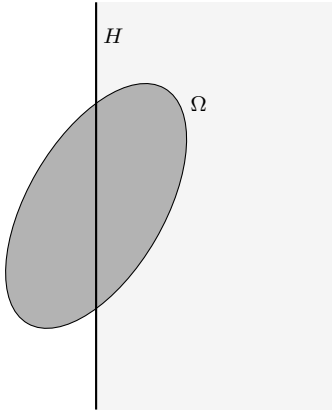

      \captionsetup{type=figure}
      \begin{tikzpicture}[scale=0.9]
         \fill[fill=vlgray] (0,-3) rectangle (3.5,3);
         \def\firstellipse{(0,0) ellipse [x radius=2, y radius=1, rotate=60]};
         \fill[dgray] \firstellipse;
         \draw[black] \firstellipse;
         
         \draw[-, thick] (0,-3) -- (0,3);
         \begin{scriptsize}    
          
             \draw (0.25,2.5) node {$H$};
             \draw (1.5,1.5) node {$\Omega$};
         \end{scriptsize}
      \end{tikzpicture}
      \captionof{figure}{ The grey region is $\Omega$}
  \end{center}
  \end{minipage}
  \begin{minipage}[t]{0.525\textwidth} 
  \begin{center}
  \captionsetup{type=figure}
      \begin{tikzpicture}[scale=0.9]
         \fill[fill=vlgray] (0,-3) rectangle (3.5,3);
         \def\firstellipse{(0,0) ellipse [x radius=2, y radius=1, rotate=60]};
         \draw[black] \firstellipse;
         \fill[dgray] \firstellipse;
         \def\secellipse{(0,0) ellipse [x radius=2, y radius=1, rotate=120]};
         \fill[dgray] \secellipse;
         \draw[black] \secellipse;
         \begin{scope}
         \clip \firstellipse;
         \fill[darkgray] \secellipse;
         \end{scope}  
         \def\rect{(0,-3) rectangle (3.5,3)}
         \begin{scope}
         \clip \rect;
         \fill[darkgray] \secellipse;
         \draw[black] \secellipse;
         \end{scope}  
         \begin{scope}
         \clip \rect;
         \fill[darkgray] \firstellipse;
        
         \end{scope}  
        \draw[black] \firstellipse;
         \draw[black] \secellipse;
           
         \draw[-, thick] (0,-3) -- (0,3);
         \begin{scriptsize}    
           
             \draw (0.25,2.5) node {$H$};
              \draw (1.75,1.5) node {$P_H(\Omega$)};
         \end{scriptsize}
      \end{tikzpicture}
      \begin{center}
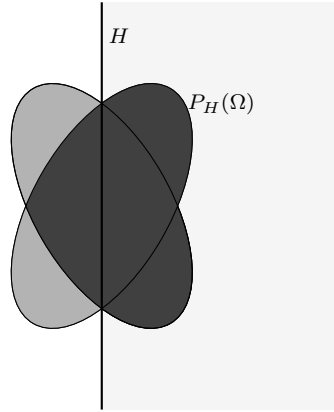

          \captionof{figure}{ The dark region is $P_H(\Omega)$}
      \end{center}
      
  \end{center}
 \end{minipage}
\begin{definition}{\textbf{(Polarization of a function).}}
    For a measurable function $u:\mathbb{R}^N\rightarrow \mathbb{R}$, the polarization $P_H(u)$ with respect to $H$ is defined as 
   \begin{equation*}
       P_H(u)(x)=\begin{cases}
           \max\{u(x),u(\sigma_H(x)\},& \text{   for }x\in H\\
           \min\{u(x),u(\sigma_H(x))\},& \text{   for }x\in \mathbb{R}^N\setminus H
       \end{cases}
   \end{equation*}
   Let $u:\Omega\rightarrow \mathbb{R}$ and let $\Tilde{u}$ be its zero extension to $\mathbb{R}^N$. The polarization $P_H(u)$ is defined as the restriction of $P_H(\Tilde{u})$ to $P_H(\Omega)$.
\end{definition}

One can notice that, if $u=\chi_\Om$, then $P_H(u)=\chi_{P_H(\Om)}$. Moreover, for $1\leq p<\infty$ and for a nonnegative function $u\in W^{1,p}_0 (\Omega)$, we have $P_H(u)\in W^{1,p}_0 (P_H(\Om))$ and  the following equality in analogy to $\eqref{polyaszego}$ (see \cite[Corollary 5.1]{brockpol}):
\begin{equation}\label{polyaszego_pol}
\int_{P_H(\Omega)} |\nabla P_H(u)|^p dx = \int_\Omega |\nabla u|^p dx.
\end{equation}
The above equality, together with the variational characterization of the first Dirichlet eigenvalue of the $p-$Laplacian, easily gives the following inequality:
\begin{equation}\label{ashokresult}
    \la_1(P_H(\Omega))\leq \la_1(\Omega).
\end{equation}
 In \cite{Anoop-Ashok2023}, it is proved that the equality is attained in equation \eqref{ashokresult} if only if $P_H(\Omega)=\Omega$ or $P_H(\Omega)=\sigma_H(\Omega)$. 

\par In this article, we study the above inequality and the equality cases for the eigenvalues of the Logarithmic potential operator.  
\begin{definition}{\bf (Logarithmic potential operator).}
 For a bounded open set $\Om$ in $\R^2$, the Logarithmic potential operator $\Lom$, on $L^2(\Omega)$ is defined as: 
\begin{align}
    \Lom u(x)&= \frac{1}{2\pi}\int_\Omega \log\frac{1}{|x-y|}u(y)dy.
\end{align}
\end{definition}


\noindent The operator $\Lom$ is self-adjoint and compact on $L^2(\Omega)$. By the spectral theorem, the set of eigenvalues of $\Lom$ is countably infinite with zero as its only limit point. The corresponding eigenfunctions are orthogonal in $L^2(\Om)$. Clearly, each eigenpair $(\tau, u)$ satisfy:
\begin{equation}\label{var_char}
    \frac{1}{2\pi}\iint\limits_{\Om\;\Om}\log\frac{1}{|x-y|} u(x)\phi(y)dxdy=\tau \int_\Omega u(x)\phi(x)dx,\,\; \forall\, \phi\in L^2(\Omega).
\end{equation}
For $u\in L^2(\Om)$, we define
\begin{equation*}
    E(u):=\frac{1}{2\pi}\iint\limits_{\Om\;\Om}\log\frac{1}{|x-y|} u(x)u(y)dxdy.
\end{equation*}
The largest eigenvalue of $\Lom$ is denoted by $\tau_1(\Om)$ and has the following variational characterization:
\begin{equation}\label{pevpos}
    \tau_1(\Om):=\sup \left\{ E(u):u\in L^2(\Omega),\|u\|_{L^2(\Omega)}=1\right\}.
\end{equation}

\noindent Our first theorem establishes a reverse Faber-Krahn type inequality for $\tau_1$ under Polarization. To state the theorem, we need the following definition.
\begin{definition}\textbf{(Almost equal).}
    Let $A$ and $B$ be two measurable subsets of $\R^N$. We say that $A$ is almost equal to $B$ (denoted by $A\cong B$) if $|A\triangle B|=0$, where $\triangle$ is the symmetric difference of the sets.
\end{definition}
\begin{theorem}\label{fklog}
    Let $\Omega\subset \R^2$ be a bounded open set and let $H$ be a polarizer such that an eigenfunction corresponding to $\tau_1(\Omega)$  is positive on $\Om\setminus \sigma_H(\Om)$. Then,
    \begin{equation}\label{pollog}
            \tau_1(\Omega)\leq \tau_1(P_H(\Omega)).
        \end{equation}
         The equality holds in $\eqref{pollog}$ only if $P_H(\Omega)\cong\Omega$ or $P_H(\Omega)\cong\sigma_H(\Omega)$.
\end{theorem}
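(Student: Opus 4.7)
The strategy is to test the polarized principal eigenfunction in the variational characterization \eqref{pevpos} of $\tau_1(P_H(\Om))$. Let $u$ be an $L^2$-normalized eigenfunction for $\tau_1(\Om)$; from the positivity hypothesis and a Krein--Rutman argument applied to $\Lom$ (which is compact with a nonnegative kernel under the standing assumption $\mathrm{diam}(\Om)\le 1$), we may assume $u\ge 0$ on $\Om$. Extend by zero to $\tilde u:\R^2\to[0,\infty)$; then $P_H(\tilde u)$ is supported in $P_H(\Om)$ and $\|P_H(\tilde u)\|_{L^2}=\|\tilde u\|_{L^2}=1$, so $P_H(\tilde u)$ is admissible in \eqref{pevpos} for $\tau_1(P_H(\Om))$. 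Hence it suffices to prove the energy comparison $E(P_H(\tilde u))\ge E(\tilde u)$, and then extract the set-theoretic conclusion from the equality case.

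The energy comparison reduces to a pointwise four-point rearrangement. For a.e.\ $(x,y)\in H\times H$ with $x\neq \sigma_H y$, set
\[
A:=\log\tfrac{1}{|x-y|}=\log\tfrac{1}{|\sigma_H x-\sigma_H y|}, \qquad B:=\log\tfrac{1}{|x-\sigma_H y|}=\log\tfrac{1}{|\sigma_H x - y|},
\]
and note $A\ge B$ because $|x-\sigma_H y|^2-|x-y|^2 = 4\,\mathrm{dist}(x,\partial H)\,\mathrm{dist}(y,\partial H)\ge 0$. Writing $a=\tilde u(x),\ b=\tilde u(\sigma_H x),\ c=\tilde u(y),\ d=\tilde u(\sigma_H y)$ and expanding the contribution of the four-point cluster $\{x,\sigma_H x\}\times\{y,\sigma_H y\}$, the identity $(a+b)(c+d)=ac+bd+ad+bc$ collapses the local difference to
\[
(A-B)\bigl(\max\{a,b\}\max\{c,d\}+\min\{a,b\}\min\{c,d\}-ac-bd\bigr)\ \ge\ 0,
\]
the nonnegativity following from the classical two-sequence rearrangement inequality. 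Integrating over $H\times H$ and using Fubini gives $E(P_H(\tilde u))\ge E(\tilde u)$, hence \eqref{pollog}.

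For the equality case, suppose $\tau_1(\Om)=\tau_1(P_H(\Om))$. Then equality must hold in the four-point inequality for a.e.\ $(x,y)\in H\times H$. Since $A>B$ strictly whenever $x,y$ lie in the interior of $H$, this forces $(u(x)-u(\sigma_H x))(u(y)-u(\sigma_H y))\ge 0$ a.e., i.e., $u-u\circ\sigma_H$ does not change sign on $H$. The hypothesis that $u$ is positive on $\Om\setminus\sigma_H(\Om)$ pins down this sign at the set level: on $(\Om\setminus\sigma_H(\Om))\cap H$ one has $u-u\circ\sigma_H>0$, while on $(\sigma_H(\Om)\setminus\Om)\cap H$ one has $u-u\circ\sigma_H<0$. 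Compatibility with a single sign forces one of these two sets to be null---which, unwinding the definition of $P_H$, is precisely $P_H(\Om)\cong\Om$ or $P_H(\Om)\cong\sigma_H(\Om)$.

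The main obstacle is the equality discussion: the qualitative sign information from the rearrangement step must be combined carefully with the positivity hypothesis on $\Om\setminus\sigma_H(\Om)$ to extract a measure-theoretic identification of $\Om$ with $P_H(\Om)$ or $\sigma_H(\Om)$. Everything else---the two-point energy comparison and the variational argument---is standard once the symmetry and monotonicity of the logarithmic kernel under $\sigma_H$ are noted.
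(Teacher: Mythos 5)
Your approach --- test the polarized eigenfunction $P_H(\tilde u)$ in the variational characterization of $\tau_1(P_H(\Om))$, reduce the energy comparison to a four-point rearrangement inequality for the strictly decreasing kernel $\log\frac{1}{|\cdot|}$, and extract the set-theoretic conclusion from the equality case --- is essentially the paper's; the paper simply isolates your inline rearrangement step as the standalone Riesz-type inequality of Proposition \ref{riezpol} and then applies it. Your factorization of the local four-point difference as $(A-B)\bigl(\max\{a,b\}\max\{c,d\}+\min\{a,b\}\min\{c,d\}-ac-bd\bigr)$ agrees, case by case, with the paper's $(f(x)-f(\overline x))(f(\overline y)-f(y))\bigl(K(|x-y|)-K(|x-\overline y|)\bigr)$ on the mixed block $A\times B$ and vanishes on $A\times A$ and $B\times B$; your equality analysis --- sign-constancy of $u-u\circ\sigma_H$ on $H$, then using positivity on $\Om\setminus\sigma_H(\Om)$ to force one of $(\Om\setminus\sigma_H(\Om))\cap H$ or $(\sigma_H(\Om)\setminus\Om)\cap H$ to be null, which is exactly $P_H(\Om)\cong\Om$ or $P_H(\Om)\cong\sigma_H(\Om)$ --- is a correct unpacking of what the paper's equality case encodes.

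The one genuine gap is in your opening normalization. You invoke a ``Krein--Rutman argument \dots under the standing assumption $\mathrm{diam}(\Om)\le 1$'' to conclude $u\ge 0$ on all of $\Om$, but Theorem \ref{fklog} does \emph{not} assume $\mathrm{diam}(\Om)\le 1$; that bound appears only in the ensuing corollary and in Proposition \ref{prop-non-neg} as a \emph{sufficient} condition for the theorem's actual hypothesis, which is merely that some eigenfunction for $\tau_1(\Om)$ is positive on $\Om\setminus\sigma_H(\Om)$. As stated, therefore, ``$u\ge 0$ on $\Om$'' is not available to you. It is, however, also not needed: positivity on $\Om\setminus\sigma_H(\Om)$ is precisely what is required to show $P_H(\tilde u)=0$ on $P_H(\Om)^c$ (for $x\in P_H(\Om)^c\cap H^c$, whichever of $x$ and $\sigma_H x$ happens to belong to $\Om$ in fact lies in $\Om\setminus\sigma_H(\Om)$, so the defining minimum is zero, while on $P_H(\Om)^c\cap H$ both values vanish trivially), which is all you use it for in making $P_H(\tilde u)$ an admissible normalized test function for $\tau_1(P_H(\Om))$; and the four-point rearrangement inequality you write is valid for functions of arbitrary sign. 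Replacing the spurious Krein--Rutman normalization with the stated hypothesis in the support step closes the gap and recovers the paper's proof.
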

If  $diam(\Omega)\leq 1$, we establish that  $\tau_1(\Om)$ admits an eigenfunction  which is positive on entire $\Om$ (see Proposition \ref{prop-non-neg}). Now, as a consequence of the above theorem, we obtain the following corollary for open sets $\Om$ with $diam(\Om)\leq 1$:
\begin{corollary}
    Let $\Omega\subset \R^2$ be a bounded open set with $diam(\Om)\leq 1$. Then, for any polarizer $H$, \eqref{pollog} holds. Furthermore, the equality holds in $\eqref{pollog}$ only if $P_H(\Omega)\cong\Omega$ or $P_H(\Omega)\cong\sigma_H(\Omega)$.
\end{corollary}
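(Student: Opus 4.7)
The plan is to deduce the corollary directly from Theorem \ref{fklog} by verifying its positivity hypothesis under the geometric constraint $diam(\Om)\le 1$.

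First, I would invoke Proposition \ref{prop-non-neg}, announced in the paragraph preceding the corollary, to obtain an eigenfunction $u$ of $\Lom$ corresponding to $\tau_1(\Om)$ that is strictly positive on all of $\Om$. In particular, $u>0$ on the (possibly smaller) subset $\Om\setminus \sigma_H(\Om)$ for every polarizer $H$, which is exactly the hypothesis required by Theorem \ref{fklog}.

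With this positivity in hand, Theorem \ref{fklog} applies verbatim, yielding both conclusions of the corollary: the inequality $\tau_1(\Om)\le \tau_1(P_H(\Om))$, and the rigidity statement that equality forces $P_H(\Om)\cong \Om$ or $P_H(\Om)\cong \sigma_H(\Om)$.

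I expect no obstacle in this reduction itself; the corollary is essentially a repackaging of the theorem that replaces an analytic (sign) hypothesis with a clean geometric one. The substantive content is hidden in Proposition \ref{prop-non-neg}, whose proof relies on the observation that $diam(\Om)\le 1$ makes the kernel $\log(1/|x-y|)$ pointwise nonnegative on $\Om\times\Om$. Under such positivity of the kernel, a Krein-Rutman/Perron-Frobenius type argument applied to the compact self-adjoint operator $\Lom$ would produce a nonnegative eigenfunction realizing $\tau_1(\Om)$ (via the variational characterization \eqref{pevpos}, one can replace a maximizer $u$ by $|u|$ without decreasing $E$, since $E(|u|)\ge E(u)$ when the kernel is nonnegative); the strict positivity is then upgraded by noting that $\Lom v>0$ on $\Om$ whenever $v\ge 0$ is not identically zero, so any nonnegative eigenfunction inherits strict positivity from $\tfrac{1}{\tau_1(\Om)}\Lom u=u$. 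Since we may cite Proposition \ref{prop-non-neg} directly, the corollary follows with no further work.
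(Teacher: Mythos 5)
Your proof is correct and takes exactly the same route as the paper: the corollary is an immediate consequence of Proposition \ref{prop-non-neg} (which supplies a strictly positive first eigenfunction when $\mathrm{diam}(\Omega)\le 1$) together with Theorem \ref{fklog}. Your sketch of the underlying positivity argument also matches the paper's proof of Proposition \ref{prop-non-neg}, so there is nothing to add.
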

\begin{remark}\label{Rem:Suragan}
\begin{enumerate}[(i)]
    \item     In \cite[Lemma 3.1]{suragan2016}, authors claimed that if $\Lom$ is a positive operator then every eigenfunction corresponding to $\tau_1(\Om)$ has constant sign. This is not true even for the unit ball $B_1$ as there are sign-changing eigenfunctions corresponding  $\tau_1(B_1)$ of $\Lom$, see \cite[Theorem 3.1]{anderson}. In the proof this lemma, authors asserts that if $u$ is a sign-changing eigenfunction corresponding to $\tau_1(\Om)$, then there exists $x_0\in\Om$ and a neighbourhood $B(x_0,r)$ such that \begin{equation}
        |u(x)u(y)|-u(x)u(y)>0\quad \forall \, x,y\in B(x_0,r).
    \end{equation}
    Their assertion is incorrect as the above inequality fails if $u(x)$ and $u(y)$ have the same sign.

    \item We conjecture that if $\Lom$ is a positive operator on $L^2(\Om)$, then at least one of the eigenfunctions corresponding to $\tau_1(\Om)$ is positive. 
    \item If $\Lom$ is not a positive operator, it must have a unique negative eigenvalue\cite[Theorem 2 $\&$ 3]{troutman1967} and the corresponding eigenfunction will be positive\cite[Theorem 1]{troutman1967}. Consequently, the eigenfunction corresponding to $\tau_1(\Om)$ will be sign-changing. In this case, also, we anticipate 
    \begin{equation*}
         \tau_1(\Omega)\leq \tau_1(P_H(\Omega)).
    \end{equation*}

\end{enumerate}
\end{remark}

In \cite[Theorem 2.5]{suragan2016}, authors considered bounded domains $\Om$ for which $\Lom$ is a positive operator on $L^2(\Om)$, and they claimed   that the following reverse Faber-Krahn inequality holds:
\begin{equation}\label{fkaliterlog}
\tau_1(\Omega) \leq \tau_1(\Omega^*)
\end{equation}
 However, their proof is wrong in multiple places. Firstly, their proof uses an incorrect lemma (Lemma 3.1, see Remark \ref{Rem:Suragan}) for the existence of a positive eigenfunction corresponding to $\tau_1(\Om)$. Secondly, their proof requires a Riesz-type inequality(\cite[Lemma 5.4]{suragan2016} ) with triple integrals, and its proof is wrong. More precisely,  the equation (5.12) of \cite[Lemma 5.4]{suragan2016} is incorrect since $h_2(R)$ becomes negative for large $R$. As a consequence of Theorem \ref{fklog}, we get the reverse Faber-Krahn inequality for $\tau_1$ under Schwarz symmetrization.  
\begin{theorem}\label{faber-unique-log}
Let $\Omega \subset \mathbb{R}^2$ be a bounded open set such that 
$\tau_1(\Omega)$ possesses a positive eigenfunction. Then \eqref{fkaliterlog} holds.
Equality holds in \eqref{fkaliterlog} only if $\Omega \cong \Omega^*$ up to a translation.
\end{theorem}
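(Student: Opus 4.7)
The plan is to deduce Theorem \ref{faber-unique-log} from Theorem \ref{fklog} by approximating the Schwarz symmetrization through iterated polarizations. Let $u$ be a positive eigenfunction corresponding to $\tau_1(\Om)$ with $\|u\|_{L^2(\Om)}=1$, so that $E(u)=\tau_1(\Om)$, and extend $u$ by zero to $\R^2$. The first step is to isolate from the proof of Theorem \ref{fklog} the underlying functional inequality $E(P_H(v))\geq E(v)$ valid for every compactly supported $v\in L^2(\R^2)$, together with its equality clause that $P_H(v)=v$ or $P_H(v)=v\circ\sigma_H$ almost everywhere. After splitting $\R^2\times\R^2$ into the four quadrants determined by $H$, this reduces, via the geometric fact $|x-y|\leq|x-\sigma_H(y)|$ for $x,y$ on the same side of $\partial H$, to the elementary four-point rearrangement
\begin{equation*}
(a-b)\bigl[\max\{p,r\}\max\{q,s\}+\min\{p,r\}\min\{q,s\}-pq-rs\bigr]\geq 0,
\end{equation*}
with $a\geq b$ the two kernel values; this holds for arbitrary real $p,q,r,s$, so no sign restriction on $v$ is required. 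The positivity hypothesis in Theorem \ref{fklog} enters only to guarantee $\mathrm{supp}(P_H(u))\subset P_H(\Om)$, a property that is automatically preserved in the iterated scheme below since $u\geq 0$.

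By the classical Brock--Solynin approximation theorem I then fix a sequence of polarizers $\{H_n\}$ such that $u_n:=P_{H_n}\circ\cdots\circ P_{H_1}(u)$ converges to $u^*$ in $L^2(\R^2)$, with all supports contained in a common bounded set. Each $u_n\geq 0$ is supported in $\Om_n:=P_{H_n}\circ\cdots\circ P_{H_1}(\Om)$, and iterating the polarization inequality above gives $E(u_n)\geq E(u)=\tau_1(\Om)$ for every $n$. Since the bilinear form associated with $E$ is continuous on $L^2(B_R)$ for any $R$ with $B_R\supset\Om_n$ for all $n$ (by compactness of $\Lom$ on $L^2(B_R)$), passing to the limit yields $E(u^*)\geq\tau_1(\Om)$. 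As $u^*\geq 0$ is supported in $\Om^*$ with $\|u^*\|_{L^2}=1$, the variational characterization \eqref{pevpos} produces $\tau_1(\Om^*)\geq E(u^*)\geq\tau_1(\Om)$, which is \eqref{fkaliterlog}.

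For the equality case, $\tau_1(\Om)=\tau_1(\Om^*)$ forces $E(u_n)$ to be constant in $n$, so the polarization equality clause gives $u_n=u_{n-1}$ or $u_n=u_{n-1}\circ\sigma_{H_n}$ at each step. Consequently $u_n=u\circ\phi_n$ for a composition $\phi_n$ of some of the reflections $\sigma_{H_i}$, i.e., a Euclidean isometry of $\R^2$. Combining $u\circ\phi_n\to u^*$ in $L^2$ with the full rotational invariance of $u^*$ about the origin forces $u$ itself to be a translate of $u^*$, whence $\Om\cong\Om^*$ up to translation. The main obstacle I anticipate is precisely this last rigidity step: extracting a well-behaved limiting translation from the accumulating composition of reflections along a Brock--Solynin sequence, and showing that the branches in the equality clause collapse to a single rigid translation in the limit. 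This is the standard delicate point in polarization-to-symmetrization arguments (compare Brock--Solynin, Van Schaftingen, or Burchard's analysis of equality in Riesz rearrangements) and must be carried out carefully for the logarithmic kernel, whose non-sign-definiteness plays no role here because the key inequality $|x-y|<|x-\sigma_H(y)|$ is strict away from $\partial H$.
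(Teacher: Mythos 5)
Your forward direction coincides with the paper's: polarize the positive eigenfunction along a Brock--Solynin--Van Schaftingen sequence, iterate the Riesz-type inequality of Proposition~\ref{riezpol}, pass to the limit on a fixed ball $B_R$ containing all $\Omega_n$, and invoke the variational characterization~\eqref{pevpos} to conclude $\tau_1(\Omega)\le\tau_1(\Omega^*)$. (The paper justifies the limit passage by dominated convergence using the uniform $L^\infty$ bound from Proposition~\ref{prop_regularity} rather than by continuity of the quadratic form, but this is cosmetic.) Your observation that the elementary four-point rearrangement requires no sign restriction is also accurate and matches the paper's Proposition~\ref{riezpol}; the positivity of the eigenfunction is indeed used only to control the support after polarization.

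The equality case is where you diverge, and it is precisely where your argument has the gap you yourself flag. Running the equality clause of Proposition~\ref{riezpol} step by step along the specific approximating sequence leaves you needing to show that $u\circ\phi_n\to u^*$ in $L^2$, with $\phi_n$ an accumulating product of reflections, forces $u$ to be a translate of $u^*$. That is a genuine compactness-plus-rigidity argument on the isometry group which you have not carried out; it is salvageable (the supports all lie in a fixed ball, so the isometries cannot escape and one can extract a convergent subsequence), but it is delicate and the paper avoids it entirely. Instead, the paper observes that for \emph{every} polarizer $H$,
\begin{equation*}
\tau_1(\Omega)\le\tau_1(P_H(\Omega))\le\tau_1\bigl((P_H(\Omega))^*\bigr)=\tau_1(\Omega^*),
\end{equation*}
so equality throughout forces $\tau_1(\Omega)=\tau_1(P_H(\Omega))$ for \emph{all} $H$. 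Theorem~\ref{fklog}'s equality clause then gives $P_H(\Omega)\cong\Omega$ or $P_H(\Omega)\cong\sigma_H(\Omega)$ for every polarizer, and the rigidity you are trying to establish by hand is exactly the content of the cited \cite[Lemma~6.3]{brockpol}, which converts this universal polarization-invariance into $\Omega\cong\Omega^*$ up to a translation. You should replace your final paragraph with this cleaner reduction: work with an arbitrary polarizer rather than only the Brock--Solynin sequence, then quote the known rigidity lemma.
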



Next, we study the behaviour of $\tau_1(\Omega)$  with respect to certain variations of $\Omega.$ For $0<r<R$ and $e_1=(1,0)$, we set  $$\Omega_t:=B_R(0)\setminus \overline{B_r(te_1)},$$ where $B_\rho(x)$ is the open ball centered at $x$ with radius $\rho$  in $\R^2$ and $\overline{B_\rho(x)}$ is the corresponding closed ball centred at $x$ with radius $\rho$. There is considerable interest in studying the behavior of the first eigenvalue of $\Om_t$  with respect to $t$ for various operators. In \cite{kesavan2003, Harrell}, authors established that the first Dirichlet eigenvalue  $\lambda_1(\Omega_t)$ of the Laplacian   is strictly decreasing on $[0,R-r)$. This result is extended for the $p$-Laplacian in  \cite{anoop_bobkov_sarath}. The Hadamard perturbation formula is the key ingredient of proof of the monotonicity result in all the aforementioned articles. In \cite{Ashok_JDE}, authors established several monotonicity results by exploiting the strict Faber-Krahn type inequality under the Polarization. 
For such results, one has to merely express the domain variations in terms of an appropriate set of polarizations. This idea is also used in \cite[Theorem 1.1]{ashoknew-p-q} to investigate the monotonicity of the first $q$-eigenvalue of the fractional $p$-Laplace operator.
Here, we establish the monotonicity of $\tau_1(\Omega_t)$ of $\Lom$. We need some restrictions on the outer radius $R$ to ensure the existence of a positive eigenfunction.
\begin{theorem}\label{corlog}
      If $R<\frac{1}{2}$, then  $\tau_1(\Omega_t)$ is strictly increasing for $t\in [0,R-r)$.
 \end{theorem}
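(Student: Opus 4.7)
The plan is to exhibit, for each pair $0 \le s < t < R-r$, a single polarizer $H$ for which $P_H(\Omega_s)$ is an isometric image of $\Omega_t$, and then invoke the strict reverse Faber--Krahn inequality of Theorem \ref{fklog}.

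I would take
\begin{equation*}
H = \{x \in \R^2 : x_1 > (s-t)/2\},
\end{equation*}
so that $\partial H$ is the perpendicular bisector of the segment joining $0$ and $(s-t)e_1$. The reflection $\sigma_H$ then interchanges $0$ with $(s-t)e_1$ and sends $se_1$ to $-te_1$, giving $\sigma_H(\Omega_s) = B_R((s-t)e_1) \setminus \overline{B_r(-te_1)}$. The central set identity to verify is
\begin{equation*}
P_H(\Omega_s) = B_R(0) \setminus \overline{B_r(-te_1)}.
\end{equation*}
Two ingredients go into it. First, every $x \in H$ is closer to $0$ than to $(s-t)e_1$, so $B_R((s-t)e_1) \cap H \subset B_R(0) \cap H$, with the reverse inclusion on $H^c$; this is what lets the outer ball reassemble as $B_R(0)$ after polarization. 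Second, the algebraic identity
\begin{equation*}
|x+te_1|^2 - |x-se_1|^2 = 2(s+t)\bigl[x_1 - (s-t)/2\bigr]
\end{equation*}
forces $\overline{B_r(se_1)} \cap H^c \subset \overline{B_r(-te_1)}$, together with the mirror inclusion $\overline{B_r(-te_1)} \cap H \subset \overline{B_r(se_1)}$, which collapses the removed set in the polarization formula to exactly $\overline{B_r(-te_1)}$---uniformly in the regimes $s+t \ge 2r$ (disjoint holes) and $s+t < 2r$ (overlapping holes).

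Because $\tau_1$ is invariant under isometries and the reflection $(x_1,x_2)\mapsto(-x_1,x_2)$ maps $B_R(0)\setminus\overline{B_r(-te_1)}$ onto $\Omega_t$, the identity above gives $\tau_1(P_H(\Omega_s))=\tau_1(\Omega_t)$. The hypothesis $R<1/2$ forces $\operatorname{diam}(\Omega_s)\le 2R<1$, so Proposition \ref{prop-non-neg} supplies an eigenfunction of $\tau_1(\Omega_s)$ that is positive on all of $\Omega_s$, and hence in particular on $\Omega_s\setminus\sigma_H(\Omega_s)$. Theorem \ref{fklog} then applies and yields $\tau_1(\Omega_s)\le\tau_1(P_H(\Omega_s))=\tau_1(\Omega_t)$. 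To upgrade to strict inequality, I would rule out the two equality alternatives of Theorem \ref{fklog}: $P_H(\Omega_s)\cong\Omega_s$ would force $\overline{B_r(-te_1)}\cong\overline{B_r(se_1)}$, i.e.\ $s=-t$, contradicting $s\ge 0$ and $t>0$, while $P_H(\Omega_s)\cong\sigma_H(\Omega_s)$ would force $B_R(0)=B_R((s-t)e_1)$, i.e.\ $s=t$. The main obstacle will be the uniform verification of the polarization identity in the overlapping-hole regime $s+t<2r$, where the candidate obstacles $\overline{B_r(se_1)}$ and $\overline{B_r(-te_1)}$ intersect across $\partial H$; the algebraic identity above is what dispatches this delicate case cleanly.
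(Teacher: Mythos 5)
Your argument is correct and follows the same strategy as the paper: express the translation of the inner obstacle as a single polarization, supply the positive eigenfunction via Proposition \ref{prop-non-neg} (using $\operatorname{diam}(\Omega_s)\le 2R<1$), and invoke the strict reverse Faber--Krahn inequality of Theorem \ref{fklog}. The paper's only cosmetic difference is the choice of polarizer: it takes $H=\{x_1<(t_1+t_2)/2\}$, the perpendicular bisector of the two hole centers, so that $P_H(\Omega_{t_1})=\Omega_{t_2}$ directly with no extra isometry, whereas your bisector of $0$ and $(s-t)e_1$ produces the mirror image $B_R(0)\setminus\overline{B_r(-te_1)}$ of $\Omega_t$ and requires the auxiliary reflection $x_1\mapsto -x_1$ to conclude; your worry about the overlapping-hole regime $s+t<2r$ is unfounded, since the two inclusions from your algebraic identity hold uniformly and no case split is ever needed.
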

We also investigate the monotonicity of $\tau_1(\Om\setminus \mathcal{O})$ for other types of obstacles and variations. We refer to Section \ref{translation_rotation} for the results in these directions.

\subsection{The smallest  eigenvalue of $\Lom$}

\par In contrast to \eqref{pevpos}, let us now consider
\begin{align}\label{pevneg}
    \Tilde{\tau}_1(\Om)&:=
    \inf \left\{ E(u):u\in L^2(\Omega),\|u\|_{L^2(\Omega)}=1\right\}.
    \end{align}
 If $\Omega$ is such that $\Lom$ is a positive operator on $L^2(\Omega)$, then one can observe that $\Tilde{\tau}_1(\Om)=0$ and
 $\Tilde{\tau}_1(\Om)$ is not an eigenvalue of $\Lom$\cite[Corollary 1]{troutman1967}.  If $\Omega$ is such that $\Lom$ is not a positive operator on $L^2(\Omega)$ then  $\tilde{\tau}_1(\Om)$ becomes negative.
 In \cite[Theorem 2 $\&$ 3]{troutman1967},  Troutman established that $\tilde{\tau}_1(\Om)$  indeed an eigenvalue of  $\Lom$ and there are no other negative eigenvalue. He also characterized the positivity of $\Lom$ in terms of a geometric property known as the {\it transfinite diameter} of $\Omega$. We recall the definition and properties of transfinite diameter (see \cite[Chapter 16]{hille2002}). 
 
\subsubsection{\textbf{Transfinite diameter}}
Let $E$ be a compact set in $\R^2$. For each $n\geq 2$, let
\begin{equation*}
    \rho_n(E)= \max_{(x_1, x_2, \cdots x_n) \in E^n} \,\,  \left [\underset{1\leq i<j\leq n}{\prod} |x_i-x_j| \right]^{\frac{2}{n(n-1)}},
\end{equation*}   
be the $n^{th}$-diameter of $E$. For $n=2$, $\rho_2(E)=diam(E)$, the usual diameter of $E$. The sequence $\rho_n$ is monotonically decreasing \cite[Theorem 16.2.1]{hille2002} and the transfinite diameter of $E$(denoted by $T_{diam}(E)$) is defined as
\begin{equation}\label{limit_n_diameter}
    T_{diam}(E)=\lim_{n\rightarrow\infty}\rho_n(E).
\end{equation}
Another definition  of the transfinite diameter is given in \cite[Theorem 16.4.4]{hille2002} as $$T_{diam}(E)=e^{- V_E},$$ where
\begin{equation}\label{robinconst}
   V_E=\underset{\nu}{\inf} \left\{ \iint\limits_{E\;E} \log\frac{1}{|x-y|}d\nu(x)d\nu(y)\right\},
\end{equation}
and $\nu$ varies over all normalized Borel measures on $E$. This constant $V_E$ is known as the Robin constant of $E$ in the literature. 

\noindent For a nonempty bounded domain  $\Omega$, we define transfinite diameter as
\begin{equation}
    T_{diam}(\Om):=T_{diam}(\overline{\Om}).
\end{equation}
The transfinite diameter of a disc is its radius, whereas, for an annular region, it is the outer radius. For an ellipse, the transfinite diameter is the average of its semi-axes. The transfinite diameter for other domains can be found in \cite{Dijkstra2008NumericalAO}, \cite{Hayman1966lecture} and  \cite{thomasransford}.

\par In\cite{anderson}, the set of all eigenvalues of $\Lom$ on $B_1$ is identified in terms of zeroes of Bessel functions. For $B_R$ with $R\leq 1$, the eigenvalues are expressed in terms of zeroes of some functions involving Bessel functions\cite{suraganvolume}. We extend the method from \cite{anderson} for arbitrary radius and exhibit all eigenvalues in the Appendix \ref{Appendix}. For $R>1$, we have $T_{diam}(B_R)=R>1$ and hence $\Tilde{\tau}_1(B_R)<0$. Next, we express $\Tilde{\tau}_1(B_R), R>1$ in terms of a zero of a function involving the modified Bessel function, 
\begin{equation}\label{I_0_series}
    I_0(x)=\sum_{m=0}^\infty \frac{x^{2m}}{2^{2m}(m!)^2},
\end{equation}
that satisfies the following differential equation:
    \begin{equation}\label{modified_bessel}
        t^2 I_0''(t)+t I_0'(t)-t^2I_0(t)=0.
    \end{equation}
 
\begin{theorem}\label{radial_small_version}
    Let $B_R$ be a disc with radius $R>1$. Then, the unique negative eigenvalue $\Tilde{\tau}_1(B_R)$ and the corresponding eigenfunction(up to a constant multiple) are given by
    \begin{equation*}
        \Tilde{\tau}_1(B_R)=-\frac{R^2}{\mu_{0}(B_R)^2}, \quad \Tilde{u}(x)= I_0\left(\frac{\mu_{0}(B_R)}{R}|x|\right),
    \end{equation*}
    where $\mu_{0}(B_R)$ is the unique root of the  equation
    $I_0(t)-\log R\, t I_0'(t)=0 $. Moreover, if $R_1<R_2$, then $\mu_0(B_{R_2})<\mu_0(B_{R_1})$.
   \end{theorem}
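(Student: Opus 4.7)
The plan is to convert the integral eigenvalue equation $\mathcal{L}_{B_R}u=\Tilde{\tau}_1(B_R)\,u$ into a boundary value problem for \eqref{modified_bessel}, and then to pin down $\mu_{0}(B_R)$ by matching the interior profile of $\mathcal{L}_{B_R}u$ against its exterior harmonic extension. I rely throughout on the Troutman results cited in the paper: since $T_{diam}(B_R)=R>1$, the operator $\mathcal{L}_{B_R}$ admits exactly one negative eigenvalue $\Tilde{\tau}_1(B_R)$, it is simple, and any associated eigenfunction has constant sign. Rotational invariance of $\mathcal{L}_{B_R}$ combined with simplicity then forces a corresponding eigenfunction $u$ to be radial, so I write $u(x)=v(|x|)$ with $v>0$.

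Setting $w:=\mathcal{L}_{B_R}u$, the distributional identity $-\Delta w=u$ on $B_R$ together with $w=\Tilde{\tau}_1(B_R)\,u$ gives $-\Tilde{\tau}_1(B_R)\Delta u=u$ on $B_R$. Writing $\Tilde{\tau}_1(B_R)=-c^{2}$ with $c>0$ and rescaling by $s=r/c$, the radial ODE $v''+v'/r=v/c^{2}$ becomes precisely \eqref{modified_bessel}. Boundedness at the origin selects the Frobenius solution $I_0$; setting $\mu:=R/c$ then yields $\Tilde{\tau}_1(B_R)=-R^{2}/\mu^{2}$ and the eigenfunction $\Tilde{u}(x)=I_0(\mu|x|/R)$ up to a multiplicative constant.

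The parameter $\mu$ is fixed by a boundary compatibility condition, which is the main step of the proof. As a logarithmic Newtonian potential, $w$ is continuous on $\R^{2}$ and harmonic outside $\overline{B_R}$; radial symmetry and the two-dimensional analogue of Newton's shell theorem give $w(r)=-M\log r/(2\pi)$ for $r\geq R$, where $M:=\int_{B_R}u\,dy$. Matching the interior value $\Tilde{\tau}_1(B_R)\,I_0(\mu)$ with the exterior value $-M\log R/(2\pi)$ at $r=R$ produces
\begin{equation*}
\Tilde{\tau}_1(B_R)\,I_0(\mu)=-\frac{M\log R}{2\pi}.
\end{equation*}
The mass $M$ is evaluated using the identity $(tI_0'(t))'=tI_0(t)$, which is immediate from \eqref{modified_bessel}: after the substitution $t=\mu \rho/R$ one obtains $M=2\pi R^{2}I_0'(\mu)/\mu$. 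Substituting this and $\Tilde{\tau}_1(B_R)=-R^{2}/\mu^{2}$ into the matching identity and simplifying yields $I_0(\mu)-\log R\cdot \mu\,I_0'(\mu)=0$, as required.

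For uniqueness of $\mu_{0}(B_R)$ and monotonicity in $R$, I analyse $g(t):=I_0(t)/(tI_0'(t))$, so the equation reads $g(\mu_{0})=\log R$. From $(tI_0'(t))'=tI_0(t)$ one obtains
\begin{equation*}
g'(t)=\frac{(I_0'(t))^{2}-(I_0(t))^{2}}{t\,(I_0'(t))^{2}},
\end{equation*}
which is strictly negative on $(0,\infty)$ because $I_0(t)>I_1(t)=I_0'(t)>0$ for $t>0$. The asymptotics $g(t)\to\infty$ as $t\to 0^{+}$ (from \eqref{I_0_series}) and $g(t)\to 0$ as $t\to\infty$ (from $I_0(t),\,I_0'(t)\sim e^{t}/\sqrt{2\pi t}$) show that $g$ is a strictly decreasing bijection from $(0,\infty)$ onto $(0,\infty)$. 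Hence $\mu_{0}(B_R)=g^{-1}(\log R)$ is uniquely defined for each $R>1$ and strictly decreasing as a function of $R$, establishing the final claim. The principal obstacle is the boundary matching via the shell theorem identity for radial logarithmic potentials; once this is in place, the rest follows from standard properties of $I_0$.
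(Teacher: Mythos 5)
Your proposal is correct, and it reaches the transcendental condition $I_0(\mu)-\log R\cdot\mu I_0'(\mu)=0$ by a genuinely different route. The paper derives the boundary condition through Lemmas \ref{lapla_ef} and \ref{ball_lemma1}: it writes the eigenvalue problem as a Helmholtz equation with a boundary integral constraint, expands that constraint in Fourier modes $e^{im\theta}$ on $\partial B_R$, and evaluates each mode by contour integration (residue theorem and Cauchy's formula). This yields the boundary conditions \eqref{m=0} and \eqref{m-1onwards} for \emph{all} $m\in\N_0$ at once, which is needed for the full spectral analysis in Appendix \ref{Appendix}. You instead set $w=\mathcal{L}_{B_R}u$, observe that $w$ is a radial logarithmic potential, and invoke the two-dimensional shell theorem to write $w(r)=-M\log r/(2\pi)$ for $r\geq R$ with $M=\int_{B_R}u$; matching this against the interior value $\Tilde{\tau}_1 I_0(\mu)$ at $r=R$, and computing $M=2\pi R^2 I_0'(\mu)/\mu$ from $(tI_0'(t))'=tI_0(t)$, gives the same condition. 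This is cleaner and more elementary for the radial mode $m=0$, but would require additional multipole analysis to recover the non-radial boundary conditions. You also explicitly justify radiality from simplicity plus rotational invariance, whereas the paper points to Theorem \ref{radial} and Remark \ref{complete_eigensystem}, which rest on the Fourier completeness argument. For the uniqueness and monotonicity of $\mu_0(B_R)$, you analyse $g(t)=I_0(t)/(tI_0'(t))$ (strictly decreasing, $\infty\to 0$), while the paper's Proposition \ref{bessel_prop_unique} analyses the reciprocal $tI_0'(t)/I_0(t)$ (strictly increasing, $0\to\infty$); these are trivially equivalent and both correctly invoke $I_0>I_0'>0$. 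Overall, a sound alternative proof.
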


\par The following theorem provides asymptotic estimates for $\Tilde{\tau}_1(B_R)$ for $R$ near to 1 and near to $\infty$.
\begin{theorem}\label{asymptotic_small_version}
     Let $B_R$ be a disc with radius $R>1$. Then,
        \begin{equation*}
        \Tilde{\tau}_1(B_R)\approx\begin{cases} -R^2(\log\, R)^2 
                    & \text{ when  $R$ is near } 1,\\
         -\frac{R^2 \,\log\, R}{2}     & \text{ when $R$ is near $\infty$} .
        \end{cases}
    \end{equation*}
\end{theorem}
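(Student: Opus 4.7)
The plan is to use Theorem \ref{radial_small_version} to convert the asymptotic question about $\Tilde{\tau}_1(B_R)$ into an asymptotic question about the root $\mu = \mu_0(B_R)$. Since $\Tilde{\tau}_1(B_R) = -R^2/\mu_0(B_R)^2$ and $\mu$ satisfies $I_0(\mu) = (\log R)\,\mu I_0'(\mu)$, using $I_0' = I_1$ and setting $L := \log R$, the defining equation becomes
\begin{equation*}
L \;=\; \frac{I_0(\mu)}{\mu\, I_1(\mu)} \;=:\; f(\mu).
\end{equation*}
Thus it suffices to study the behaviour of $\mu = f^{-1}(L)$ separately as $L\to 0^+$ (i.e.\ $R\to 1^+$) and as $L\to\infty$ (i.e.\ $R\to\infty$), and substitute back into $-R^2/\mu^2$.

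For the regime $R\to 1^+$, the monotonicity of $\mu_0$ in $R$ (the last assertion of Theorem \ref{radial_small_version}) forces $\mu\to\infty$. I would then invoke the classical large-argument asymptotics
\begin{equation*}
I_0(\mu) \sim \frac{e^\mu}{\sqrt{2\pi\mu}}, \qquad I_1(\mu) \sim \frac{e^\mu}{\sqrt{2\pi\mu}} \qquad (\mu\to\infty),
\end{equation*}
which give $I_0(\mu)/I_1(\mu)\to 1$ and hence $f(\mu)\sim 1/\mu$. Therefore $\mu\sim 1/L = 1/\log R$, yielding $\Tilde{\tau}_1(B_R) = -R^2/\mu^2 \sim -R^2(\log R)^2$.

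For the regime $R\to\infty$, the same monotonicity forces $\mu\to 0^+$. Here I would plug the power series \eqref{I_0_series} and $I_1(\mu) = \mu/2 + O(\mu^3)$ directly into $f$, obtaining $\mu I_1(\mu) = \mu^2/2 + O(\mu^4)$ and $I_0(\mu) = 1 + O(\mu^2)$, so $f(\mu) = 2/\mu^2 + O(1)$ as $\mu\to 0^+$. Equating with $L$ gives $\mu^2\sim 2/L$, and hence $\Tilde{\tau}_1(B_R) = -R^2/\mu^2 \sim -R^2\log R/2$.

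The main obstacle is the first regime: one must justify $\mu\to\infty$ before invoking the Bessel asymptotics. This follows from the strict monotonicity $R_1<R_2 \Rightarrow \mu_0(B_{R_2})<\mu_0(B_{R_1})$ of Theorem \ref{radial_small_version} together with continuity of $f$ (which is a strictly decreasing bijection of $(0,\infty)$ onto $(0,\infty)$, for example via Tur\'an-type inequalities on modified Bessel functions), or by a direct contradiction: if $\mu$ stayed bounded as $L\to 0^+$, then $L\mu I_1(\mu)\to 0$, contradicting $I_0(\mu)\ge 1$. Everything else reduces to routine asymptotic expansion of $f$, so no further substantial obstruction is expected.
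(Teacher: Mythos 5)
Your proposal is correct and follows essentially the same route as the paper: both reduce via Theorem \ref{radial_small_version} to the asymptotics of the root of the transcendental equation, and both analyze the function $t\mapsto tI_0'(t)/I_0(t)$ (you work with its reciprocal $f=1/g$) by power series near $0$ and by the fact that $I_0(t)/I_1(t)\to 1$ near $\infty$. The only cosmetic difference is that the paper establishes $I_0/I_1\to 1$ via the integral representation $I_0(t)=\frac{1}{\pi}\int_{-1}^1\frac{e^{-t\theta}}{\sqrt{1-\theta^2}}\,d\theta$ and dominated convergence, whereas you cite the standard large-argument Bessel asymptotics, and your short contradiction argument for $\mu\to\infty$ as $L\to 0^+$ is a valid substitute for the paper's bijectivity statement in Proposition \ref{bessel_prop_unique}.
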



The rest of the article is organized as follows. In Section 2, we discuss the preliminaries. The proof of the main theorems is given in Section 3. In section 4, we discuss further properties of eigenvalues, such as their monotonicity under various domain perturbations, the behaviour of  $\Tilde{\tau}_1$ as the diameter increases, and the approximation of the eigenvalues of a domain $\Om$ with eigenvalues of discs. In Section 5, we extend the isoperimetric inequalities previously proven for the Logarithmic potential operator to the case of Riesz potential operators and also include conjectures and open problems related to transfinite diameter and $\Tilde{\tau}_1$. Finally, in Appendix \ref{Appendix} and Appendix \ref{AppendixB}, we analyze the eigenvalues of $\Lom$ on $B_R$.

\section{Preliminaries}

In this section, we recall the definitions of various symmetrizations and some properties of polarization. Furthermore, we provide some preliminary results required to prove Faber-Krahn-type inequalities. We begin by recalling the definitions. 
\begin{definition}{\textbf{(Convexity in $h-$direction).}}
    A set $C\subseteq \R^N$ is said to be convex in $h- $ direction if a line segment connecting points in $C$ is parallel to the line $\R h$, then the entire line segment is contained in $C$.
\end{definition}
\begin{definition}{\textbf{(Steiner symmetrization of a measurable set $A$).}}\label{def_steiner}
    Let $A\subseteq \R^N$ be a measurable set and $S$ be an affine hyperplane in $\R^N$. The set $A$ is said to be Steiner symmetric with respect to $S$ if $A$ is symmetric under reflection with respect to $S$ and $A$ is convex in the orthogonal direction of $S$.
\end{definition}
\begin{definition}{\textbf{(Foliated Schwarz symmetrization of a measurable set $A$).}}
    Let $a\in\R^N,\eta\in S^{N-1}$ and consider the ray $a+\R^+\eta$. A measurable set $A\subset \R^N$ is said to be foliated Schwarz symmetric concerning $a+\R^+\eta$ if the following condition holds for each $r>0$.
    \begin{equation*}
        A\cap \partial B_r(a)=B_\rho(a+r\eta)\cap\partial B_r(a),
    \end{equation*}
    where $\rho$ is chosen  such that 
    \begin{equation*}
        \mathcal{H}^{N-1}(B_\rho(a+r\eta)\cap\partial B_r(a))=\mathcal{H}^{N-1}(A\cap \partial B_r(a)),
    \end{equation*}
    with $\mathcal{H}^{N-1}$ representing the $N-1$ dimensional Hausdorff measure. 
    
\end{definition}

 \begin{definition}{\textbf{(Schwarz symmetrization of a function $f$).}}
    Let $\Omega\subset \R^N$ be a bounded measurable set, and let $f:\Omega\rightarrow\R$ be a non-negative measurable function. The Schwarz symmetrization of $f$ is the function $f^*:\Omega^*\rightarrow\R$ is defined as 
    \begin{equation*}
        f^*(x)=\int_0^\infty \chi_{\{f(x)>t\}^*}(x)dt.
    \end{equation*}
\end{definition}
In the following proposition, we state some properties of polarization without proof(for the proof, see \cite[Proposition 2.3, Proposition 2.18]{Anoop-Ashok2023}).
\begin{proposition}\label{pol_properties}
    Let $\Omega\subseteq\R^N$ be an open set and $H$ be a polarizer. Then,
    \begin{enumerate}[(i)]
        \item $P_H(\Omega)\neq \Omega$ if and only if $A_H:=\sigma_H(\Omega)\cap \Omega^c\cap H$ has non-empty interior.
        \item $P_H(\Omega)\neq \sigma_H(\Omega)$ if and only if $B_H:=\Omega\cap (\sigma_H(\Omega))^c\cap H$ has non-empty interior.
        \item For a non-negative function $f:\Om\rightarrow\R$, we have 
    \begin{equation*}
        \|f\|_{L^2(\Omega)}=\|P_H(f)\|_{L^2(P_H(\Omega))}.
    \end{equation*}
    \end{enumerate}
\end{proposition}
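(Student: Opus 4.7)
The plan is to derive all three claims directly from the definition of $P_H$ by analyzing its pointwise action on reflection pairs $\{x,\sigma_H(x)\}$. A preliminary observation for (i) and (ii) is that, since $\Omega$ and $H$ are open and $\sigma_H$ is a homeomorphism, the set
\begin{equation*}
P_H(\Omega) = [(\Omega\cup \sigma_H(\Omega))\cap H]\cup[\Omega\cap \sigma_H(\Omega)]
\end{equation*}
is itself open, being a union of two open sets. Hence the comparisons $\Omega$ vs.\ $P_H(\Omega)$ and $\sigma_H(\Omega)$ vs.\ $P_H(\Omega)$ are comparisons between open subsets of $\mathbb{R}^N$.

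For part (i), I would split $\Omega = (\Omega\cap H)\cup(\Omega\cap H^c)$: the first piece lies in $P_H(\Omega)$ by definition, while a point $x\in \Omega\cap H^c$ belongs to $P_H(\Omega)$ precisely when $x\in \sigma_H(\Omega)$. A symmetric case analysis for $P_H(\Omega)\setminus \Omega$ gives the set identities
\begin{equation*}
P_H(\Omega)\setminus \Omega = \sigma_H(\Omega)\cap H\cap \Omega^c = A_H,\qquad \Omega\setminus P_H(\Omega) = \sigma_H(A_H),
\end{equation*}
so that $P_H(\Omega)\triangle \Omega = A_H\cup \sigma_H(A_H)$. Since $\sigma_H$ is a homeomorphism, $A_H$ has nonempty interior iff $\sigma_H(A_H)$ does, which proves (i). Part (ii) is obtained by the same computation applied to $P_H(\Omega)$ vs.\ $\sigma_H(\Omega)$, yielding $P_H(\Omega)\triangle \sigma_H(\Omega) = B_H\cup \sigma_H(B_H)$.

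For (iii), the key is the pointwise identity: for every $x\in H$ the pair $\{P_H(\widetilde f)(x),\,P_H(\widetilde f)(\sigma_H(x))\}$ is merely a reordering of $\{\widetilde f(x),\widetilde f(\sigma_H(x))\}$ (placing the max on the $H$-side and the min on the $H^c$-side), so
\begin{equation*}
|P_H(\widetilde f)(x)|^2+|P_H(\widetilde f)(\sigma_H(x))|^2 = |\widetilde f(x)|^2+|\widetilde f(\sigma_H(x))|^2.
\end{equation*}
Integrating over $x\in H$ and using that $\sigma_H$ is measure preserving gives $\|P_H(\widetilde f)\|_{L^2(\mathbb{R}^N)} = \|\widetilde f\|_{L^2(\mathbb{R}^N)}$; restricting to the supports $P_H(\Omega)$ and $\Omega$ of the zero-extensions then yields (iii).

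The main subtlety I expect is in (i)--(ii), where one must carefully verify the equivalence between ``$P_H(\Omega)\neq \Omega$'' and ``$A_H$ has \emph{nonempty interior}'' (as opposed to merely nonempty). The $(\Leftarrow)$ direction is immediate from the set identity $P_H(\Omega)\setminus \Omega = A_H$. For $(\Rightarrow)$, given a point $x\in A_H$, one uses openness of $\Omega$ to obtain a ball $B_r(\sigma_H(x))\subseteq \Omega$ and reflects it to $B_r(x)\subseteq \sigma_H(\Omega)\cap H$ (after shrinking $r$ so that $B_r(x)\subseteq H$); the remaining ingredient is to locate a subball of $B_r(x)$ lying in $\Omega^c$, which for bounded open sets with negligible boundary is automatic and constitutes the only delicate topological point in the proof.
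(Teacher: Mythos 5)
The paper itself does not prove this proposition; it simply cites \cite[Proposition 2.3, Proposition 2.18]{Anoop-Ashok2023}, so there is no in-paper proof to compare against, and I can only assess your argument on its own terms.

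Your treatment of part (iii) is correct, and the set identities you derive for (i)--(ii), namely $P_H(\Omega)\setminus\Omega = A_H$, $\Omega\setminus P_H(\Omega) = \sigma_H(A_H)$, and hence $P_H(\Omega)\triangle\Omega = A_H\cup\sigma_H(A_H)$, are accurate and give a clean proof of the $(\Leftarrow)$ direction (nonempty interior of $A_H$ forces $P_H(\Omega)\neq\Omega$). The gap is in the $(\Rightarrow)$ direction, and the fix you sketch does not work. From $P_H(\Omega)\neq\Omega$ you correctly get $A_H\neq\emptyset$; to upgrade ``nonempty'' to ``nonempty interior'' you must produce an open set inside $\Omega^c$ meeting $\sigma_H(\Omega)\cap H$, i.e.\ you need $\sigma_H(\Omega)\cap H$ to meet $\operatorname{int}(\Omega^c)$, not merely $\Omega^c$. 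This can fail even when $\Omega$ is bounded, open, and $|\partial\Omega|=0$. Take $\Omega = \bigl(B_1(0)\setminus\{0\}\bigr)\cup B_1(3e_1)\subseteq\R^2$ and $H=\{x_1<3/2\}$, so $\sigma_H$ swaps the two balls. One computes $\sigma_H(\Omega)\cap H=B_1(0)$ and $A_H=B_1(0)\cap\Omega^c=\{0\}$, which is nonempty with empty interior, while $P_H(\Omega)=B_1(0)\cup\bigl(B_1(3e_1)\setminus\{3e_1\}\bigr)\neq\Omega$. Thus your claim that the missing step is ``automatic for bounded open sets with negligible boundary'' is false, and in fact the proposition as literally stated (with $\neq$ and ``nonempty interior'') does not hold for arbitrary open sets. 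The version that is actually needed downstream (and is provable) replaces $\neq$ by $\not\cong$ (not almost equal): since $|P_H(\Omega)\triangle\Omega|=2|A_H|$ and, writing $U=\sigma_H(\Omega)\cap H$, one has $A_H = (U\cap\partial\Omega)\cup(U\cap\operatorname{int}(\Omega^c))$, the hypothesis $|\partial\Omega|=0$ gives $|A_H|>0$ iff the open set $U\cap\operatorname{int}(\Omega^c)\subseteq A_H$ is nonempty. That is a measure-theoretic argument, not the topological one you propose, and it is the step your proof is missing.
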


\noindent \textbf{Riesz type inequality:} The key ingredient for proving Theorem \ref{fklog} is the Riesz-type inequality,
\begin{equation}\label{riez_inequality_pol}
        \iint\limits_{\R^N\;\R^N} f(x)f(y)K(|x-y|) dxdy \leq  \iint\limits_{\R^N\;\R^N} P_H(f)(x)P_H(f)(y)K(|x-y|) dxdy 
    \end{equation}
which is established in \cite[Lemma 2.6]{Burchard2009ASC} for non-negative measurable functions. We extend this inequality to apply to integrals involving arbitrary measurable functions. Before proceeding, we make the following proposition. Recall that, for any affine halfspace $H$, there exists a scalar $s\in \R$ and a unit vector $a\in\R^N$ such that $H=H_{s,a}:=\{x\in \R^N : x\cdot a > s\}$. We denote $\overline{x}$ for $\sigma_H(x)$.


\begin{proposition}\label{reflection_distance}
    Let $H_{s,a}$ be an open affine halfspace. If $x,y\in H_{s,a}$, then $|x-y|< |x-\overline{y}|$. On the otherhand, if $x\in H_{s,a}$ and $y\in H_{-s,-a}$, then $|x-y|> |x-\overline{y}|$.
\end{proposition}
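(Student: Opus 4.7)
The plan is to reduce both inequalities to a single algebraic identity and then read off the sign. Since $\sigma_H$ is the Euclidean reflection across the affine hyperplane $\partial H_{s,a}=\{z\in\R^N:z\cdot a=s\}$ (with $|a|=1$), it has the closed form $\overline{y}=\sigma_H(y)=y-2\bigl((y\cdot a)-s\bigr)a$. Writing $u:=x\cdot a-s$ and $v:=y\cdot a-s$, the membership conditions become transparent: $x\in H_{s,a}\Leftrightarrow u>0$, $y\in H_{s,a}\Leftrightarrow v>0$, and $y\in H_{-s,-a}\Leftrightarrow v<0$.

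The heart of the argument is then a one-line expansion. Using $|a|=1$ and $(x-y)\cdot a=u-v$, one computes
\begin{align*}
|x-\overline{y}|^2
&= |x-y+2v\,a|^2 \\
&= |x-y|^2+4v\,(x-y)\cdot a+4v^2 \\
&= |x-y|^2+4v(u-v)+4v^2 \\
&= |x-y|^2+4uv,
\end{align*}
yielding the identity $|x-\overline{y}|^2-|x-y|^2=4uv$. Both assertions now follow at once: when $x,y\in H_{s,a}$, both $u$ and $v$ are positive, so $uv>0$ and $|x-y|<|x-\overline{y}|$; when $x\in H_{s,a}$ and $y\in H_{-s,-a}$ we have $u>0>v$, so $uv<0$ and $|x-y|>|x-\overline{y}|$.

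There is really no obstacle here: once the reflection is written in closed form, the proposition collapses to a single algebraic identity. A purely geometric alternative would note that in the first case the open segment from $x$ to $y$ lies in $H_{s,a}$ and hence misses $\partial H$, while the segment from $x$ to $\overline{y}$ must cross $\partial H$ at some point $z$; since $\sigma_H$ fixes $z$ and is an isometry, $|x-\overline{y}|=|x-z|+|z-y|>|x-y|$ by the triangle inequality, strict because $z$ does not lie on the segment from $x$ to $y$. The second case is symmetric. I would still present the algebraic version, since it handles both cases uniformly and yields strict inequality without any separate analysis.
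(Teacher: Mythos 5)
Your proof is correct and follows essentially the same route as the paper: both write the reflection in closed form, expand $|x-\overline{y}|^2$ to obtain the identity $|x-\overline{y}|^2=|x-y|^2+4\bigl((x\cdot a)-s\bigr)\bigl((y\cdot a)-s\bigr)$, and read off the sign in each case. Your notation $u,v$ and the included intermediate expansion merely make the same computation a bit more explicit.
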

\begin{proof}
    Notice that for $x\in\R^N$,  its reflection is given by $\overline{x}=x+2(s- (x\cdot a))a$.  Then, we have: 
    \begin{align}
        |x-\overline{y}|^2&=|x-y|^2+4((x\cdot a)-s)((y\cdot a)-s).
    \end{align} 
    Hence, if $x,y\in H_{s,a}$, then $((x\cdot a)-s)((y\cdot a)-s)>0$ implies $|x-y|< |x-\overline{y}|$. On the otherhand, if $x\in H_{s,a}$ and $y\in H_{-s,-a}$, then $((x\cdot a)-s)((y\cdot a)-s)<0$ gives $|x-y|> |x-\overline{y}|$.
\end{proof}
\noindent Consider the space
\begin{equation*}
    \mathcal{X}=\{f:f \text{ is measurable on }\R^N \text{ and } \iint\limits_{\R^N\;\R^N} \big| f(x)f(y)K(|x-y|) \big| dxdy<\infty\}.
\end{equation*}
We prove a Riesz-type inequality for functions in $\mathcal{X}$.

\begin{proposition}\label{riezpol}
    Let $K:[0,\infty)\rightarrow \R$ be a decreasing function and $H$ be a polarizer. Then, for any $f$on $\mathbb{R}^N$, such that $f,P_H(f)\in\mathcal{X}$, the inequality \eqref{riez_inequality_pol} holds. In addition, if $K$ is strictly decreasing, the equality holds in \eqref{riez_inequality_pol} only if either $ P_H(f)=f$ a.e. or $P_H(f)=f\circ \sigma_H $ a.e.
\end{proposition}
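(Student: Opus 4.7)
The plan is to reduce both sides of \eqref{riez_inequality_pol} to integrals over $H \times H$ via the halfspace decomposition $\R^N = H \cup \partial H \cup (\R^N \setminus \overline{H})$ together with the reflection $\sigma_H$, and then compare the integrands pointwise using a two-element rearrangement inequality combined with the monotonicity of $K$. First I would use Fubini (justified by $f, P_H(f) \in \mathcal{X}$) and the fact that $\partial H$ has Lebesgue measure zero to split
\begin{equation*}
I(f) := \iint_{\R^N \times \R^N} f(x) f(y) K(|x - y|) \, dx \, dy
\end{equation*}
into four pieces according to whether $x, y$ each lie in $H$ or in $\R^N \setminus \overline{H}$. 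For the three pieces involving $\R^N \setminus \overline{H}$, I apply the change of variables $x \mapsto \sigma_H(x)$ and/or $y \mapsto \sigma_H(y)$. Since $\sigma_H$ is a measure-preserving isometry satisfying $|\sigma_H(x) - \sigma_H(y)| = |x - y|$ and $|\sigma_H(x) - y| = |x - \sigma_H(y)|$, this yields the single integral representation
\begin{equation*}
I(f) = \iint_{H \times H} \Bigl\{ \bigl[f(u) f(v) + f(\overline u) f(\overline v)\bigr] K_1 + \bigl[f(u) f(\overline v) + f(\overline u) f(v)\bigr] K_2 \Bigr\} \, du \, dv,
\end{equation*}
where $K_1 := K(|u - v|)$ and $K_2 := K(|u - \overline v|)$; the analogous identity holds for $I(P_H f)$, using that for $u \in H$ one has $P_H f(u) = \max\{f(u), f(\overline u)\}$ and $P_H f(\overline u) = \min\{f(u), f(\overline u)\}$.

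Second, I would fix $(u, v) \in H \times H$ and set $a = f(u), b = f(\overline u), c = f(v), d = f(\overline v)$, together with $A = \max\{a, b\}, B = \min\{a, b\}, C = \max\{c, d\}, D = \min\{c, d\}$. The identity
\begin{equation*}
AC + BD + AD + BC = (A + B)(C + D) = (a + b)(c + d) = ac + bd + ad + bc
\end{equation*}
reduces the pointwise difference of the polarized and unpolarized integrands to $\Delta(u, v) \cdot (K_1 - K_2)$, where $\Delta := (AC + BD) - (ac + bd)$. A short case split shows $\Delta \ge 0$ (this is the two-element rearrangement inequality: $AC + BD$ is the larger of $ac + bd$ and $ad + bc$), with $\Delta = 0$ precisely when $(a - b)(d - c) \le 0$. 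Proposition \ref{reflection_distance} gives $|u - v| < |u - \overline v|$ for $u, v \in H$, so $K_1 \ge K_2$ by monotonicity of $K$, and hence the integrand of $I(P_H f) - I(f)$ is pointwise nonnegative, which proves \eqref{riez_inequality_pol}.

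Finally, for the equality case when $K$ is strictly decreasing, the strict inequality $|u - v| < |u - \overline v|$ forces $K_1 > K_2$ almost everywhere on $H \times H$, so equality in \eqref{riez_inequality_pol} requires $\Delta(u, v) = 0$ for a.e. $(u, v) \in H \times H$. The characterization of $\Delta = 0$ means that the sets $E_+ := \{u \in H : f(u) > f(\overline u)\}$ and $E_- := \{u \in H : f(u) < f(\overline u)\}$ cannot both have positive measure: if $|E_-| = 0$, then $P_H f = f$ a.e., while if $|E_+| = 0$, then $P_H f = f \circ \sigma_H$ a.e. The main delicate step is the careful bookkeeping in the four-piece reduction and the algebraic identity/inequality for $\Delta$; once these are in place, both the inequality and the equality characterization follow directly.
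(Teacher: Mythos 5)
Your proof is correct and follows essentially the same route as the paper: reduce both sides to a single integral over $H\times H$, compare the polarized and unpolarized integrands pointwise, and combine $|u-v|<|u-\overline v|$ (Proposition \ref{reflection_distance}) with the monotonicity of $K$, then use the strict decrease of $K$ to force $\Delta=0$ a.e.\ in the equality case. The only cosmetic difference is that the paper partitions $H$ into $A=\{f\ge f\circ\sigma_H\}$ and $B=\{f<f\circ\sigma_H\}$ and checks each block, whereas you package the same case analysis into the rearrangement identity $\Delta=(AC+BD)-(ac+bd)=\max\{0,(a-b)(d-c)\}$ applied uniformly at every $(u,v)$.
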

\begin{proof}
    Let 
    \begin{equation*}
        I(f)= \iint\limits_{\R^N\;\R^N} f(x)f(y)K(|x-y|) dxdy.
     \end{equation*}
\noindent By denoting $\overline{x}=\sigma_H(x)$ and using the change of variables, we get:
 \begin{equation}
     I(f)=\iint\limits_{H\;H} \big[ (f(x)f(y)+f(\overline{x})f(\overline{y}))K(|x-y|)+(f(x)f(\overline{y})+f(\overline{x})f(y))K(|x-\overline{y}|) \big] dxdy. \\
 \end{equation}
 Let 
 \begin{equation*}
     \mathcal{S}_{f}(x,y):=(f(x)f(y)+f(\overline{x})f(\overline{y}))K(|x-y|)+(f(x)f(\overline{y})+f(\overline{x})f(y))K(|x-\overline{y}|).
 \end{equation*}
  Then,
  \begin{equation}\label{I_f}
     I(f)= \iint\limits_{A\;A} \mathcal{S}_{f}(x,y) dxdy + 2 \iint\limits_{B\;A} \mathcal{S}_{f}(x,y) dxdy +\iint\limits_{B\;B} \mathcal{S}_{f}(x,y) dxdy,
 \end{equation}
 where  
 \begin{equation*}
     A=\{x\in H: f(x)\geq f(\overline{x})\}\text{ and }B=\{x\in H: f(x)< f(\overline{x})\} .
 \end{equation*}
 Notice that,
 \begin{equation}\label{x_in A_x_in B}
\begin{aligned}
  P_H(f)(x)=f(x) &\quad\text{and}\quad P_H(f)(\overline{x})=f(\overline{x}),\quad\forall\, x\in A,\\
     P_H(f)(x)=f(\overline{x}) &\quad\text{and}\quad  P_H(f)(\overline{x})=f(x),\quad\forall\, x\in B.
\end{aligned}
\end{equation}
It is easy to verify that $$\mathcal{S}_{f}(x,y)=\mathcal{S}_{P_H(f)}(x,y), \forall\, (x,y)\in A\times A \bigcup B\times B.$$ Consequently,
\begin{equation}\label{A_A}
    \iint\limits_{A\;A} \mathcal{S}_{f}(x,y) dxdy= \iint\limits_{A\;A} \mathcal{S}_{P_H(f)}(x,y) dxdy,
\end{equation}
and
\begin{equation}\label{B_B}
    \iint\limits_{B\;B} \mathcal{S}_{f}(x,y) dxdy= \iint\limits_{B\;B} \mathcal{S}_{P_H(f)}(x,y) dxdy.
\end{equation}

 
     


\noindent On the other hand, for $(x,y)\in A\times B$, using \eqref{x_in A_x_in B}, we obtain:
    \begin{equation}\label{S_PHf-S_f}
        \mathcal{S}_{P_H(f)}(x,y)-\mathcal{S}_{f}(x,y)=(f(x)-f(\overline{x}))(f(\overline{y})-f(y))(K(|x-y|)-K(|x-\overline{y}|)).
    \end{equation}
    \noindent Since  $|x-y|< |x-\overline{y}|$ (Proposition \ref{reflection_distance}) and $K$ is decreasing, we get
    \begin{equation*}
        \mathcal{S}_{P_H(f)}(x,y)-\mathcal{S}_{f}(x,y)\geq 0.
    \end{equation*}
    Therefore,
    \begin{equation}\label{B_A}
    \iint\limits_{B\;A} \mathcal{S}_{f}(x,y) dxdy\leq \iint\limits_{B\;A} \mathcal{S}_{P_H(f)}(x,y) dxdy.
\end{equation}


\noindent Now, by combining \eqref{A_A}, \eqref{B_B}and \eqref{B_A}, we conclude  $I(f)\leq I(P_H(f))$. \\ Next, we assume that $K$ is strictly decreasing and  $I(f)=I(P_H(f))$. Thus,
\begin{equation*}
    \iint\limits_{B\;A} \mathcal{S}_{f}(x,y)dxdy=\iint\limits_{B\;A} \mathcal{S}_{P_H(f)}(x,y)dxdy.
\end{equation*}
Therefore, $$\mathcal{S}_{P_H(f)}(x,y)=\mathcal{S}_{f}(x,y) \text{ a.e on } A\times B.$$ Now, let $A_1=\{x\in A: f(x)>f(\overline{x})\}.$ 
Since $K$ is strictly decreasing,  from \eqref{S_PHf-S_f} we conclude that $\mathcal{S}_{P_H(f)}(x,y)>\mathcal{S}_{f}(x,y)$, for $(x,y)\in A_1\times B$. Therefore $|A_1\times B|=0$ and hence $|A_1|=0$ or $|B|=0$. Notice that, 

 \begin{enumerate}[(i)]
    \item if $|B|=0$, then $f(x)\geq f(\overline{x})$  a.e. in $H$, and hence $P_H(f)=f$ a.e. in $\R^N,$
     \item if $|A_1|=0$, then $f(x)\leq f(\overline{x})$ for a.e. in $H$, and hence $P_H(f)=f\circ\sigma_H$  a.e. in $\R^N$.  
 \end{enumerate}
 This concludes the proof.
\end{proof}


\begin{proposition}\label{Riesz on Omega}
    Let $\Om\subseteq\R^N$ and $f\in L^2(\Om)$ be  non-negative. Then, 
    \begin{equation}\label{int_Om_int_P_H_Om}
        \iint\limits_{\Om\;\Om}\log \frac{1}{|x-y|} f(x)f(y) dxdy \leq \int\limits_{P_H(\Om)} \int\limits_{P_H(\Om)} \log \frac{1}{|x-y|}  P_H(f)(x)P_H(f)(y) dxdy.
    \end{equation}
\end{proposition}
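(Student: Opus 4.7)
The plan is a direct reduction to Proposition~\ref{riezpol} with the choice of kernel $K(t) = \log(1/t) = -\log t$, which is strictly decreasing on $(0,\infty)$. Let $\tilde f$ denote the zero extension of $f$ from $\Om$ to $\R^N$, so that the integrals in \eqref{int_Om_int_P_H_Om} can be rewritten as integrals over $\R^N \times \R^N$ provided that polarization is compatible with zero extension and provided that $\tilde f$ lies in the class $\mathcal{X}$.

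First I would verify the compatibility identity $P_H(\tilde f) = \widetilde{P_H(f)}$, i.e.\ that zero extension from $P_H(\Om)$ to $\R^N$ commutes with polarization. This reduces to a short case analysis: if $x \in H \setminus P_H(\Om)$, then by definition of $P_H(\Om)$ neither $x$ nor $\sigma_H(x)$ lies in $\Om$, so $P_H(\tilde f)(x) = \max\{0, 0\} = 0$; and if $x \in H^c$ with $x \notin P_H(\Om)$, then at least one of $x, \sigma_H(x)$ lies outside $\Om$, so $P_H(\tilde f)(x) = \min\{\tilde f(x), \tilde f(\sigma_H(x))\} = 0$ by non-negativity. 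This identifies $P_H(\tilde f)$ with the zero extension of $P_H(f)$.

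Next I would confirm the integrability hypothesis $\tilde f, P_H(\tilde f) \in \mathcal{X}$. Since $\Om$ is bounded, the kernel $|\log|x-y||$ is locally integrable on $\Om \times \Om$ with only a mild logarithmic singularity on the diagonal; a Young-type convolution estimate (or direct Cauchy--Schwarz combined with $\log(1/|\cdot|) \in L^1_{\rm loc}$) yields
\begin{equation*}
    \iint_{\Om \times \Om} |f(x) f(y) \log|x-y|| \, dx \, dy \leq C(\Om) \, \|f\|_{L^2(\Om)}^2 < \infty.
\end{equation*}
The same bound transfers verbatim to $P_H(f)$ on $P_H(\Om)$ using that $|P_H(\Om)| = |\Om|$ and $\|P_H(f)\|_{L^2(P_H(\Om))} = \|f\|_{L^2(\Om)}$ by Proposition~\ref{pol_properties}(iii).

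Finally, Proposition~\ref{riezpol} applied to $\tilde f$ with $K(t) = \log(1/t)$ gives the desired inequality over $\R^N \times \R^N$, and restricting the integrals back to $\Om \times \Om$ on the left and $P_H(\Om) \times P_H(\Om)$ on the right (using that both integrands vanish outside these sets) yields \eqref{int_Om_int_P_H_Om}. I do not foresee any serious obstacle; the only delicate point is the zero-extension compatibility $P_H(\tilde f) = \widetilde{P_H(f)}$, which is routine but essential since without it the polarized function on the right would not coincide with the one produced by Proposition~\ref{riezpol}.
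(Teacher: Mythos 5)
Your argument is correct and mirrors the paper's proof essentially line by line: the paper likewise extends $f$ by zero, checks by a case analysis (split over $H \setminus (\Om \cup \sigma_H(\Om))$ and $H^c \setminus (\Om \cap \sigma_H(\Om))$, using non-negativity of $f$) that $P_H(f)$ vanishes off $P_H(\Om)$, notes that $f, P_H(f) \in \mathcal{X}$ since $f \in L^2(\Om)$, and then invokes Proposition~\ref{riezpol} with $K(t) = \log(1/t)$. The only difference is that you spell out the integrability in slightly more detail, while the paper states it without elaboration.
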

\begin{proof}
        We extend $f$ by zero outside $\Omega$. Notice that,
    \begin{equation*}
        P_H(\Om)^c=H\setminus (\Om\cup\sigma_H(\Om)) \cup H^c\setminus (\Om\cap \sigma_H(\Om))
    \end{equation*}
    For $x\in H\setminus (\Om\cup\sigma_H(\Om))$, $f(x)=f(\overline{x})=0.$ Thus $P_H(f)(x)=0$. For $x\in H^c\setminus (\Om\cap \sigma_H(\Om))$, we have either $f(x)=0$ or $f(\overline{x})=0$. Therefore, $P_H(f)(x)=\min \{f(x),f(\overline{x})\}=0$, since $f$ is non-negative. Thus, $P_H(f)=0$ on $P_H(\Om)^c$. Since $f\in L^2(\Om)$,  it follows that both $f,P_H(f)\in\mathcal{X}$. Now, the proof follows from Proposition \ref{riezpol}.
    
\end{proof}

    


\noindent As a consequence of Proposition \ref{riezpol}, we have the following corollary.   
\begin{corollary}\label{ries_ineq_from_pol}
    Let $\Om\subseteq\mathbb{R}^N$ be a bounded domain. Then, for a non-negative measurable function $f\in L^2(\Om)$, 
     \begin{equation}\label{Riesz_Schwarz}
        \iint\limits_{\Om\;\Om} \log \frac{1}{|x-y|} f(x)f(y) dxdy \leq  \iint\limits_{\Om^*\;\Om^*} \log \frac{1}{|x-y|} f^*(x)f^*(y) dxdy.
    \end{equation}
    Moreover, the equality holds in \eqref{Riesz_Schwarz} only if $ f=f^*\circ\tau$ a.e.
\end{corollary}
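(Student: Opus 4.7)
My approach is to exploit the well-known fact that the Schwarz symmetrization of a non-negative function can be approximated by iterated polarizations. Specifically, I would invoke the classical result (due to Brock--Solynin and recorded in the Burchard reference already cited in the excerpt) that there exists a sequence of polarizers $\{H_n\}_{n\in\N}$ with the following property: setting $f_0 := f$ (extended by zero outside $\Om$), $\Om_0 := \Om$, and inductively $f_n := P_{H_n}(f_{n-1})$, $\Om_n := P_{H_n}(\Om_{n-1})$, one has $f_n \to f^*$ strongly in $L^2(\R^2)$, the supports $\Om_n$ all lie inside a fixed ball $B$ containing $\Om^*$, and $|\Om_n \triangle \Om^*| \to 0$.

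Given this approximation, the inequality \eqref{Riesz_Schwarz} is essentially immediate. By Proposition \ref{Riesz on Omega}, applied with $\Om$ replaced by $\Om_{n-1}$ and the polarizer $H_n$, the sequence
\begin{equation*}
E_n \;:=\; \iint\limits_{\Om_n\;\Om_n} \log\frac{1}{|x-y|}\, f_n(x)\, f_n(y)\, dx\, dy
\end{equation*}
is monotonically non-decreasing in $n$. Since every $\Om_n$ lies in the fixed bounded set $B$ and the logarithmic potential operator on $L^2(B)$ is bounded, the quadratic form $u\mapsto E(u)$ is continuous on $L^2(B)$. From $f_n\to f^*$ in $L^2$, I conclude $E_n \to E(f^*)$, which together with monotonicity yields $E(f) = E_0 \le E(f^*)$, i.e.\ \eqref{Riesz_Schwarz}.

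For the equality statement, suppose equality holds in \eqref{Riesz_Schwarz}. Because the energy is monotone along any polarization sequence and the limit along the chosen sequence equals $E(f^*)$, equality must already hold at the very first step, and in fact for \emph{every} polarizer $H$. Since $\log(1/r)$ is strictly decreasing in $r$, the equality clause of Proposition \ref{riezpol} then forces that for every polarizer $H$, either $P_H(f) = f$ a.e.\ or $P_H(f) = f\circ \sigma_H$ a.e.\ on $\R^2$. The principal obstacle of the proof lies here: I must upgrade this polarization-rigidity into the statement $f = f^* \circ \tau$ a.e.\ for some translation $\tau$. The standard argument is to test against a rich enough family of polarizers: picking two halfspaces whose bounding hyperplanes intersect and applying the rigidity dichotomy shows that $f$ must be radial about the intersection point (up to a nullset), and then varying the halfspaces pins down a single center of symmetry; the monotone-radial structure (from polarizations through halfspaces bounded by parallel hyperplanes) then identifies $f$ with a translate of its Schwarz symmetrization. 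This step may alternatively be imported wholesale from the equality discussion in the Brock--Solynin / Burchard framework already used above.
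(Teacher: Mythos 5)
Your argument is correct and follows the route the paper itself gestures at: approximate the Schwarz symmetrization by iterated polarizations, use the one-step Riesz inequality (Proposition \ref{Riesz on Omega}) to get monotone increase of the energy, and pass to the limit. Note, though, that the paper gives no explicit proof of this corollary --- it merely cites \cite[Lemma 2]{CompetingSymmetries} and \cite[Theorem 2.10]{Burchard2009ASC} --- so what you have written is a genuine fill-in rather than a repetition, and it is worth comparing to the way the paper implements the same idea inside Theorem \ref{faber-unique-log}.

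The main technical difference is in how the limit is taken. The paper's proof of Theorem \ref{faber-unique-log} passes to the limit via dominated convergence, using an $L^\infty$ bound on the eigenfunction furnished by Proposition \ref{prop_regularity}. That bound is unavailable for a general nonnegative $f\in L^2(\Om)$, so the DCT argument does not transfer verbatim to the corollary. Your alternative --- observing that the logarithmic potential operator on $L^2(B)$ is Hilbert--Schmidt (its kernel is square-integrable on $B\times B$ for any bounded $B$), hence bounded, so the quadratic form is $L^2$-continuous --- is exactly the right fix and is, in this generality, the cleaner argument. You also need, and use, the uniform localization $\Om_n\subset B$; note this rests on Proposition \ref{pol_decrs_diam} (diameter does not increase under polarization) together with the eventual overlap $\Om_n\cap\Om^*\neq\emptyset$, a point worth making explicit rather than attributing to the approximation theorem outright.

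Two small points on the equality case. First, the upgrade from ``equality along the chosen sequence'' to ``equality for \emph{every} polarizer $H$'' needs one more word: you apply the already-proved inequality to $P_H(f)$ and use the fact that $(P_H(f))^* = f^*$ (polarization preserves the distribution function), which gives $E(f)\le E(P_H(f))\le E(f^*)=E(f)$. Second, your heuristic for the rigidity step --- ``picking two halfspaces whose bounding hyperplanes intersect \ldots shows that $f$ must be radial about the intersection point'' --- is imprecise for $N>2$, where two hyperplanes meet in an $(N-2)$-plane rather than a point; the actual argument is more involved. You are right, however, that this implication (the dichotomy $P_H(f)=f$ or $P_H(f)=f\circ\sigma_H$ for all $H$ implies $f=f^*\circ\tau$) can be imported from the Brock--Solynin/Burchard framework, which is also what the paper does in the proof of Theorem \ref{faber-unique-log} by invoking \cite[Lemma 6.3]{brockpol}. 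With these clarifications the proof is complete.
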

The above result is also proved in \cite[Lemma 2]{CompetingSymmetries}, and Burchard gives an alternate proof via polarization in \cite[Theorem 2.10]{Burchard2009ASC}.



\noindent We now state some regularity properties of the eigenfunctions of the Logarithmic potential.
\begin{proposition}\label{prop_regularity}
    Let $(\tau,u)$ be an eigenpair of $\Lom$. Then, $u\in C(\overline{\Omega})$ and is bounded. Furthermore, $\Lom u(x)=\tau u(x)$ for all $x\in\Om$, i.e.,
    \begin{equation}\label{tau u=integral}
       \tau  u(x)=\frac{1}{2\pi}\int_\Om \log \frac{1}{|x-y|}u(y)dy.
    \end{equation}
\end{proposition}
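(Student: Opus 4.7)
The plan is to work with the explicit pointwise integral representation
\[
v(x) \;:=\; \frac{1}{2\pi}\int_\Omega \log\frac{1}{|x-y|}\,u(y)\,dy, \qquad x\in\R^2,
\]
show that $v$ is bounded and continuous on $\overline{\Omega}$, and then identify $v$ with $\tau u$ up to a null set. Boundedness is immediate from Cauchy--Schwarz:
\[
|v(x)| \;\leq\; \frac{1}{2\pi}\,\|u\|_{L^2(\Omega)}\left(\int_\Omega \Bigl|\log |x-y|\Bigr|^2 dy\right)^{1/2},
\]
and since $\Omega$ is bounded and $z\mapsto |\log|z||^2$ is locally integrable on $\R^2$, the right-hand side is bounded uniformly in $x\in \R^2$.

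For the continuity step, set $D := \operatorname{diam}(\Omega)$ and introduce the truncated kernel $L(z) := \log(1/|z|)\,\chi_{B_D(0)}(z)$, which lies in $L^2(\R^2)$ because $\int_0^D (\log r)^2\, r\,dr<\infty$. For $x\in\overline{\Omega}$ and $y\in\Omega$ one has $|x-y|\le D$, so
\[
v(x) \;=\; \frac{1}{2\pi}\int_\Omega L(x-y)\,u(y)\,dy.
\]
The standard fact that translation is continuous in $L^2(\R^2)$ then yields $\|L(\cdot-x_n)-L(\cdot-x_0)\|_{L^2(\R^2)}\to 0$ whenever $x_n\to x_0$, and applying Cauchy--Schwarz against $u\chi_\Omega\in L^2(\R^2)$ forces $v(x_n)\to v(x_0)$. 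Hence $v\in C(\overline{\Omega})$.

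Finally, the eigenvalue equation $\Lom u=\tau u$ in $L^2(\Omega)$ is precisely $v=\tau u$ a.e.\ on $\Omega$. Since $\Lom$ is injective on $L^2(\Omega)$ (one can recover $u$ from $\Lom u$ by applying $-\Delta$ in the distributional sense), every eigenpair has $\tau\ne 0$; dividing by $\tau$ therefore shows that the continuous bounded function $v/\tau$ is a representative of $u$, and using this representative we obtain $\tau u(x)=v(x)=\Lom u(x)$ pointwise for every $x\in\overline{\Omega}$, which is \eqref{tau u=integral}. The only genuine obstacle is the continuity of $v$ at points where the kernel is singular, but it is dispatched routinely by the $L^2$-continuity of translations applied to the locally $L^2$ kernel $\log(1/|\cdot|)$; no deeper potential-theoretic tool is needed.
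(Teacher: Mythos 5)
Your proof is correct and takes a genuinely different, more self-contained route than the paper. The paper simply cites Troutman's interior regularity result, namely that $\Lom f \in C^\alpha(\Omega)$ for every $f\in L^2(\Omega)$, and reads off continuity (indeed H\"older continuity) of $u=\tau^{-1}\Lom u$ together with its extension to $\overline{\Omega}$; the pointwise identity \eqref{tau u=integral} is then immediate from the variational identity \eqref{var_char}. You instead prove the needed $C(\overline{\Omega})$ regularity from scratch: truncate the kernel to $L=\log(1/|\cdot|)\,\chi_{B_D(0)}\in L^2(\R^2)$, observe that $v(x)=\frac{1}{2\pi}\int_{\R^2}L(x-y)(u\chi_\Omega)(y)\,dy$ for $x\in\overline{\Omega}$, and combine Cauchy--Schwarz with the $L^2$-continuity of translations. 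This is elementary, citation-free, and yields exactly what the proposition claims (mere continuity rather than the stronger $C^\alpha$). You also make explicit the small but necessary point, left implicit in the paper, that $\tau\neq 0$ for an eigenpair, via the distributional identity $-\Delta(\Lom u)=u$ in $\D'(\Omega)$; this is correct. One minor imprecision worth fixing: the bound $\bigl(\int_\Omega|\log|x-y||^2\,dy\bigr)^{1/2}$ is \emph{not} uniform over all $x\in\R^2$ (it grows like $|\Omega|^{1/2}\log|x|$ as $|x|\to\infty$); it is uniform over $x$ in any bounded set and in particular over $x\in\overline{\Omega}$, which is all that is used and is exactly what your subsequent truncation $|x-y|\le D$ encodes. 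The rest of the argument is sound.
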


\begin{proof}
 For any $f\in L^2(\Omega)$, it is known that $\Lom f\in C^\alpha (\Omega)$ with $0<\alpha<1$\cite[p.2]{troutman1967}. Thus, $u\in C^\alpha(\Omega)$ and hence $u\in C(\overline{\Om})$ by continuos extension. Now, from \eqref{var_char}, we obtain $\Lom u(x)=\tau u(x)$ for all $x\in\Om$.
\end{proof}

\noindent The following proposition states that polarization reduces the diameter.
\begin{proposition}\label{pol_decrs_diam}
    Let $K\subset \R^N$ be a compact set and $H$ be a polarizer. Then,
    \begin{equation*}
        diam(P_H(K))\leq diam(K).
    \end{equation*}
\end{proposition}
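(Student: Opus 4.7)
The plan is to show that for any two points $u, v \in P_H(K)$, there exist preimages $u', v' \in K$ (each being either the point itself or its reflection through $\partial H$) with $|u' - v'| \geq |u - v|$; this immediately gives $|u - v| \leq diam(K)$ and hence the desired inequality after taking the supremum over $u, v$.

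The first step is to extract the key structural fact from the definition $P_H(K) = [(K \cup \sigma_H(K)) \cap H] \cup [K \cap \sigma_H(K)]$. Namely, for every $u \in P_H(K)$ at least one of $\{u, \sigma_H(u)\}$ lies in $K$, and moreover if $u \notin H$ (so in particular when $u$ sits in $H^c$ or on $\partial H$), then \emph{both} $u$ and $\sigma_H(u)$ lie in $K$. Writing $\bar{x} = \sigma_H(x)$, note that $\sigma_H$ is an isometry, so the four candidate distances $|u - v|, |u - \bar{v}|, |\bar{u} - v|, |\bar{u} - \bar{v}|$ reduce to just two values: $|u - v| = |\bar{u} - \bar{v}|$ and $|u - \bar{v}| = |\bar{u} - v|$.

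Next I would invoke Proposition \ref{reflection_distance}, which says that when $u$ and $v$ lie on the same side of $\partial H$ one has $|u - v| \leq |u - \bar{v}|$, while on opposite sides one has $|u - v| \geq |u - \bar{v}|$. I then split into cases by the positions of $u$ and $v$ relative to $H$. If both are in $H$, then for whichever of $u, \bar{u}$ belongs to $K$ and whichever of $v, \bar{v}$ belongs to $K$, the resulting pair $(u', v')$ realizes one of the two available distances, both of which are at least $|u - v|$ by the same-side inequality. If exactly one, say $v$, lies outside $H$, then by the structural fact both $v, \bar{v} \in K$, giving flexibility: if $u \in K$ pick $(u, v)$; if instead only $\bar{u} \in K$, pick $(\bar{u}, \bar{v})$, which has distance exactly $|u - v|$. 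If both $u, v \in H^c$, all four preimage pairs are available. In every case we obtain $u', v' \in K$ with $|u' - v'| \geq |u - v|$.

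There is no real obstacle here — the argument is a short case analysis once the two ingredients (the preimage observation from the definition of $P_H$, and Proposition \ref{reflection_distance}) are in place. The only mild subtlety is handling points of $\partial H$, but since $\sigma_H$ fixes such points the two candidate distances collapse to one and the case analysis degenerates harmlessly. Compactness of $K$ plays no essential role beyond guaranteeing that $diam(K)$ is attained and finite.
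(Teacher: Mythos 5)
Your proof is correct and uses essentially the same approach as the paper: decompose $P_H(K)$ according to the definition to locate preimages in $K$, then apply Proposition~\ref{reflection_distance} together with the fact that $\sigma_H$ is an isometry. The only cosmetic difference is that you take a supremum at the end rather than invoking compactness to realize the diameter, but the case analysis and the use of the reflection-distance lemma are the same as in the paper.
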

\begin{proof} Since $K$ is compact, $P_H(K)$ is also compact, and its diameter is attained by two points $x,y\in P_H(K)$, where $|x-y|=diam(P_H(K))$. If both $x$ and $y$ are in $K$ or in $\sigma_H(K)$, we have $|x-y|\leq diam(K)$. If one of $x$ or $y$ belongs to the intersection $K \cap \sigma_H(K)$, it reduces to one of the previous cases. Now, if $x\in [K \setminus \sigma_H(K)]\cap H$ and $y\in [\sigma_H(K) \setminus K]\cap H$, then $\overline{y} \in K$, and by Proposition \ref{reflection_distance} we have $|x-y|\leq |x-\overline{y}| \leq diam(K)$. Hence, in all cases, we have $\text{diam}(P_H(K)) \leq \text{diam}(K)$. 
\end{proof}

\section{Proof of the Main Theorems} 
In this section, we prove the main theorems stated in the introduction: Theorem \ref{fklog}, Theorem \ref{faber-unique-log}, Theorem \ref{corlog}, Theorem \ref{radial_small_version}, and Theorem \ref{asymptotic_small_version}. First, we establish the existence of a positive eigenfunction when $diam(\Om)\leq 1$, utilizing the positivity of the kernel.

\begin{proposition}\label{prop-non-neg}
    Let $\Omega\subset \R^2$ be an open set with $diam(\Omega)\leq 1$ . Then, the first eigenfunction corresponding to $\tau_1$ does not vanish in $\Om$. 
\end{proposition}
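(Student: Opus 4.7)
\medskip

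\noindent\textbf{Proof plan.} The plan is to exploit the non-negativity of the kernel $\log\frac{1}{|x-y|}$ on $\Omega\times\Omega$ (which holds because $\mathrm{diam}(\Omega)\le 1$) in two ways: first to symmetrise an eigenfunction, and then to argue via the integral identity that a non-negative eigenfunction cannot vanish at any interior point.

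First I would observe that, since $|x-y|\le 1$ for all $x,y\in\Omega$, we have $\log\frac{1}{|x-y|}\ge 0$ on $\Omega\times\Omega$, with strict inequality off the measure-zero set $\{|x-y|=1\}$. Plugging any non-zero non-negative test function into \eqref{pevpos} then gives $\tau_1(\Omega)>0$. Let $u$ be an eigenfunction corresponding to $\tau_1(\Omega)$, normalised so that $\|u\|_{L^2(\Omega)}=1$. Because the kernel is non-negative, pointwise we have $|u(x)||u(y)|\ge u(x)u(y)$, hence
\begin{equation*}
E(|u|)=\frac{1}{2\pi}\iint\limits_{\Om\;\Om}\log\frac{1}{|x-y|}|u(x)||u(y)|\,dxdy\;\ge\; E(u)=\tau_1(\Omega).
\end{equation*}
Since $\||u|\|_{L^2(\Omega)}=1$ and $\tau_1(\Omega)$ is the supremum in \eqref{pevpos}, equality must hold, so $|u|$ is itself an $L^2$-maximiser and therefore an eigenfunction of $\Lom$ corresponding to $\tau_1(\Omega)$.

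Next, I would invoke Proposition \ref{prop_regularity} applied to the non-negative eigenfunction $v:=|u|$: it belongs to $C(\overline{\Omega})$ and satisfies the pointwise identity
\begin{equation*}
\tau_1(\Omega)\,v(x)=\frac{1}{2\pi}\int_\Omega \log\frac{1}{|x-y|}\,v(y)\,dy,\quad\text{for every }x\in\Omega.
\end{equation*}
Suppose, for a contradiction, that $v(x_0)=0$ for some $x_0\in\Omega$. Since $\tau_1(\Omega)>0$, the left-hand side vanishes, so the integrand, being a product of two non-negative functions, must vanish almost everywhere on $\Omega$. As $\log\frac{1}{|x_0-y|}>0$ for every $y\in\Omega$ with $|x_0-y|<1$ and the locus $\{y\in\Omega:|x_0-y|=1\}$ is Lebesgue-negligible in $\R^2$, we conclude that $v\equiv 0$ a.e.\ in $\Omega$, and then $v\equiv 0$ on $\overline{\Omega}$ by continuity. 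This contradicts $\|v\|_{L^2(\Omega)}=1$, and therefore $v>0$ throughout $\Omega$.

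The only mildly delicate point is the step that promotes $|u|$ from a maximiser of $E$ (subject to the $L^2$-constraint) to an actual eigenfunction; this is standard once one notes that $E$ is a quadratic form and $\tau_1(\Omega)$ is realised as the top of the spectrum of the self-adjoint compact operator $\Lom$, so every unit-norm maximiser of $E$ lies in the $\tau_1$-eigenspace. Everything else is a direct consequence of the sign of the kernel under the diameter assumption.
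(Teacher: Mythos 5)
Your proof is correct and arrives at the same conclusion, but the first half takes a genuinely different route from the paper. The paper shows directly that an eigenfunction $\phi_1$ of $\tau_1$ cannot change sign: assuming $|\Omega^+|,|\Omega^-|>0$, the positivity of the kernel yields the \emph{strict} inequality $E(|\phi_1|)>E(\phi_1)=\tau_1(\Omega)$, contradicting the definition of $\tau_1$ as a supremum. You instead run a softer argument: the non-strict inequality $E(|u|)\ge E(u)=\tau_1(\Omega)$ forces $E(|u|)=\tau_1(\Omega)$, and you then invoke the spectral fact that any unit-norm maximiser of the Rayleigh quotient of a compact self-adjoint operator lies in the top eigenspace, so $|u|$ is itself a $\tau_1$-eigenfunction. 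That fact is standard, and indeed the paper proves it (as Proposition~\ref{rayleighquotient_pos_iff}), though only later and for a different purpose; so your argument is not circular, merely front-loading a lemma the paper postpones. The pay-off is a cleaner first step (no case analysis on $\Omega^\pm$), at the cost of quietly pulling in a second variational fact beyond the bare definition of $\tau_1$; the paper's version is more self-contained and also records the stronger qualitative information that the eigenfunction literally cannot change sign. The second half of both proofs — evaluating the integral identity at a hypothetical zero $x_0$ and using $\log\frac{1}{|x_0-y|}>0$ on $\Omega$ to force $v\equiv 0$ — is the same. One small remark: since $\Omega$ is open with $\mathrm{diam}(\Omega)\le 1$, one actually has $|x-y|<1$ for \emph{all} $x,y\in\Omega$ (the supremum is not attained on an open set), so your hedge about ``the measure-zero set $\{|x-y|=1\}$'' is unnecessary; that set is empty, as the paper observes.
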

\begin{proof}
     Let $\phi_1$ be an eigenfunction corresponding to $\tau_1$ such that $\| \phi_1\|_{L^2(\Om)}=1$. Proposition \ref{prop_regularity} shows that $\phi_1$ is continuous. Moreover,
    \begin{equation}\label{contradiction}
        E( \phi_1) = \sup \left\{ E(u):u\in L^2(\Omega),\|u\|_{L^2(\Omega)}=1\right\}.
    \end{equation}
    Let
    \begin{equation*}
        \Omega^+=\{x\in\Omega: \phi_1(x)>0\},\,\,\text{  and  }
        \Omega^-=\{x\in\Omega: \phi_1(x)<0\}.
    \end{equation*}
    Suppose $\phi_1$ is sign changing, then 
    \begin{equation}\label{measnonzero}
        |\Omega^+|\neq 0 \text{ and }|\Omega^-|\neq 0.
    \end{equation}
   Moreover,
    \begin{equation}\label{signchange}
        |\phi_1(x)\phi_1(y)|>\phi_1(x)\phi_1(y),\quad \text{for all } x\in\Omega^+\, ,\, y\in\Omega^-.
    \end{equation}
    In addition, since $diam(\Om)\leq 1$, we have $|x-y|<1\text{ for } x,y\in\Om$ and hence
    \begin{equation}\label{log is positive}
        \log \frac{1}{|x-y|} > 0,\quad \forall\, x,y\in\Om.
    \end{equation}
    Now consider,
    \begin{align*}
        E (|\phi_1|) &=\frac{1}{2\pi}\iint\limits_{\Om^+\;\Om^+}\log\frac{1}{|x-y|} \phi_1(x)\phi_1(y)dxdy + \frac{1}{\pi}\iint\limits_{\Om^+\;\Om^-}\log\frac{1}{|x-y|} |\phi_1(x) \phi_1(y)|dxdy\\
        &\quad\quad\quad\quad +\frac{1}{2\pi} \iint\limits_{\Om^-\;\Om^-}\log\frac{1}{|x-y|} \phi_1(x)\phi_1(y)dxdy\\
        &> \frac{1}{2\pi}\iint\limits_{\Om^+\;\Om^+}\log\frac{1}{|x-y|} \phi_1(x)\phi_1(y)dxdy + \frac{1}{\pi} \iint\limits_{\Om^+\;\Om^-}\log\frac{1}{|x-y|} \phi_1(x)\phi_1(y)dxdy\\
        &\quad\quad\quad\quad +\frac{1}{2\pi}\iint\limits_{\Om^-\;\Om^-}\log\frac{1}{|x-y|}\phi_1(x)\phi_1(y)dxdy= E(\phi_1),
    \end{align*}
where the strict inequality follows from $\eqref{measnonzero},\eqref{signchange} $ and \eqref{log is positive}.
A contradiction to \eqref{contradiction}. Thus, $\phi_1$ can not change its sign in $\Omega$, and hence $\phi_1$ can be chosen to be non-negative.   
     Now suppose that $\phi_1(x_0)=0$ for some $x_0\in\Om$. Then,  
    \begin{equation}
        0=\tau_1 \phi_1(x_0)=\frac{1}{2\pi}\int_\Omega \log\frac{1}{|x_0-y|}\phi_1(y)dy.
    \end{equation}
    Since the integrand does not change sign, from \eqref{log is positive}, we must have $\phi_1 \equiv 0$ in $\Omega$, a contradiction as $\phi_1$ is an eigenfunction. Thus, $\phi_1$ does not vanish in $\Om$.
     
\end{proof}

\begin{remark}
    If $diam(\Om)\leq 1$, from Proposition \ref{prop-non-neg}, we can infer that every eigenfunction corresponding to $\tau_1$ has a constant sign. Moreover, for any two eigenfunctions $\phi_1,\phi_2$  corresponding to $\tau_1,$  and $x_0\in \Omega$, we can find $c\in \mathbb{R}$, such that  $\phi_1(x_0)+c\phi_2(x_0)=0.$  Again, by Proposition \ref{prop-non-neg}, we conclude that $\phi_1(x)+c\phi_2(x)=0,\forall\, x\in \Omega.$ This shows that  $\tau_1$ is simple. 
\end{remark}


\noindent Now we prove Theorem \ref{fklog}.
\\
\noindent{\bf Proof of Theorem \ref{fklog}:}
Let $\phi_1$ be the eigenfunction corresponding to $\tau_1(\Om)$, satisfying $\|\phi_1\|_{L^2(\Omega)}=1$ and $\phi_1>0$ on $\Om\setminus \sigma_H(\Om)$. Notice that,  $\|P_H(\phi_1)\|_{L^2(P_H(\Omega))}=\|\phi_1\|_{L^2(\Omega)}=1$ and $P_H(\phi_1)=0$ on $P_H(\Om)^c$. Now, by using Proposition \ref{riezpol}, we obtain:
\begin{align}\label{logpolid}
        \tau_1(\Omega)&=\frac{1}{2\pi}\iint\limits_{\Om\;\Om}\log\frac{1}{|x-y|}\phi_1(x)\phi_1(y) dxdy\nonumber\\
        &\leq \frac{1}{2\pi} \int\limits_{P_H(\Om)} \int\limits_{P_H(\Om)} \log\frac{1}{|x-y|}P_H(\phi_1)(x)P_H(\phi_1)(y)dxdy\leq \tau_1(P_H(\Omega)).\nonumber
    \end{align} 
\noindent Next, assume that $\tau_1(\Omega)=\tau_1(P_H(\Omega))$. Then, from the above inequality, we obtain
    \begin{equation*}
        \frac{1}{2\pi}\iint\limits_{\Om\;\Om} \log\frac{1}{|x-y|}\phi_1(x)\phi_1(y)dxdy\\
        = \frac{1}{2\pi}\int\limits_{P_H(\Om)} \int\limits_{P_H(\Om)}\log\frac{1}{|x-y|}P_H(\phi_1)(x)P_H(\phi_1)(y) dxdy.
    \end{equation*}
     Now, as a consequence of Proposition \ref{riezpol}, we have  either
     \begin{equation}\label{two_possible_cases}
         P_H(\phi_1)=\phi_1\text{ a.e.  in }\R^N \text{ or } P_H(\phi_1)=\phi_1\circ \sigma_H\text{ a.e.  in }\R^N.
     \end{equation}
    One can easily verify that,
    \begin{equation*}
        P_H(\phi_1)\neq \phi_1\text{ on }P_H(\Om)\,\triangle\, \Om,
    \end{equation*}
    and 
    \begin{equation*}
        P_H(\phi_1)\neq \phi_1\circ\sigma_H\text{ on }P_H(\Om)\,\triangle\, \sigma_H(\Om).
    \end{equation*}
     By \eqref{two_possible_cases}, we must have $|P_H(\Om)\,\triangle\, \Om|=0$ or $|P_H(\Om)\,\triangle\, \sigma_H(\Om)|=0$. Thus, $P_H(\Omega)\cong\Omega$ or $P_H(\Omega)\cong\sigma_H(\Omega)$.
    
\qed 
    

Next, we prove Theorem \ref{faber-unique-log} by adapting the arguments from \cite[Theorem 1.3]{Ashok-Nirjan2023}. Before proceeding, we recall the following result from \cite[Theorem 4.4]{jeanvan}.
    \begin{proposition}\label{approxschwarz}
        Let $u\in L^2(\R^N)$ be a non-negative function and $u^*$ be its Schwarz symmetrization. Then, there exists a sequence of polarizers $(H_n)_{n\in\N}$ such that
        \begin{equation*}
            P_{H_n H_{n-1}\cdots H_1}(u):=P_{H_n}(P_{H_{n-1}}(\cdots (P_{H_1}(u)))) \rightarrow u^*\text{ in }L^2(\R^N).
        \end{equation*}
    \end{proposition}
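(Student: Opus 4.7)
The plan is to construct $(H_n)$ by cycling through a countable dense family of halfspaces whose boundary passes through the origin, and to identify the limit of $u_n := P_{H_n}(u_{n-1})$ (with $u_0 = u$) as $u^*$ using that $u^*$ is the unique equimeasurable rearrangement of $u$ invariant under every polarization through the origin. First I would reduce to the case where $u$ is bounded and compactly supported by replacing $u$ with $(u \wedge M)\chi_{B_R(0)}$, controlling the $L^2$-error and recovering the general statement at the end by a diagonal argument. With this reduction, Proposition \ref{pol_decrs_diam} applied to a closed set containing the support of $u$ ensures that the supports of all iterates $u_n$ remain inside a fixed compact set. By Proposition \ref{pol_properties}, each $u_n$ is equimeasurable with $u$, so in particular $(u_n)$ is bounded in $L^2$ with $\|u_n\|_{L^2} = \|u\|_{L^2}$.

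Next I would enumerate a countable dense family $\{\tilde H^{(k)}\}_{k\in\N}$ of polarizers with $0 \in \partial \tilde H^{(k)}$ and choose $(H_n)$ so that each $\tilde H^{(k)}$ occurs infinitely often among the $H_n$ (e.g.\ via a bijection $\N \to \N \times \N$). After extracting a subsequence with $u_{n_j} \to u_\infty$ strongly in $L^2$, I would identify $u_\infty$ as follows: for each fixed $k$, infinitely many indices $j$ satisfy $H_{n_j+1} = \tilde H^{(k)}$, so along that subfamily $u_{n_j+1} = P_{\tilde H^{(k)}}(u_{n_j})$. Since polarization is an $L^2$-isometry (hence $1$-Lipschitz) and $\|u_{n_j+1} - u_{n_j}\|_{L^2} \to 0$ along a suitable diagonal subsequence, passing to the limit forces $P_{\tilde H^{(k)}}(u_\infty) = u_\infty$ for every $k$. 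By density of $\{\tilde H^{(k)}\}$ among halfspaces through $0$ and $L^2$-continuity of $H \mapsto P_H(v)$ in the polarizer parameter, I would conclude $P_H(u_\infty) = u_\infty$ for every halfspace $H$ with $0 \in \partial H$. A nonnegative $L^2$ function invariant under all such polarizations is necessarily radial and radially nonincreasing about the origin; combined with $u_\infty \sim u$, this pins $u_\infty = u^*$ a.e.

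The hard part will be the strong compactness claim: $L^2$-boundedness with fixed compact support does not by itself give strong $L^2$-compactness, and polarization is continuous only in the strong topology. I would address this at the level of super-level sets by writing $E_t^n := \{u_n > t\} = P_{H_n\cdots H_1}(\{u > t\})$; each $E_t^n$ has measure $|\{u > t\}|$ and lies in a fixed bounded set. The set-level analogue of the proposition (iterated polarizations of a bounded measurable set by halfspaces through $0$, drawn from a dense family, converge in symmetric difference to the concentric ball of the same measure) can be proved by a monotonicity argument using a strictly radially decreasing test integrand such as $x \mapsto \int_{E_t^n} e^{-|x|^2} dx$; integrating via the layer-cake identity $u_n(x) = \int_0^\infty \chi_{E_t^n}(x)\, dt$ and dominated convergence then upgrades set-level convergence to strong $L^2$-convergence of a subsequence $u_{n_j} \to u^*$. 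Since every weak subsequential limit must equal $u^*$ and the norms are constant, the full sequence converges strongly to $u^*$ in $L^2$.
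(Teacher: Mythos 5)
The paper does not prove this proposition at all: it is quoted verbatim from Van Schaftingen's universal approximation theorem (\cite[Theorem 4.4]{jeanvan}), so what you are attempting is a from-scratch proof of that theorem, and your sketch has two genuine gaps. The first is the choice of polarizer family. You cycle through halfspaces $H$ with $0\in\partial H$ and want to conclude that a function fixed by all such polarizations equals $u^*$. That is false: if $0\in\partial H$ then $\sigma_H$ is a linear reflection, so $|\sigma_H x|=|x|$, and therefore \emph{every} radial function --- including a radially increasing one such as $u(x)=|x|\chi_{B_1}(x)$ --- satisfies $P_H(u)=u$ for all these $H$. Invariance under this family gives radial symmetry only, never the monotonicity, so your identification step ``invariant $\Rightarrow$ radial and radially nonincreasing'' breaks down. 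The correct family (as in Brock--Solynin and Van Schaftingen) consists of halfspaces that \emph{contain} the origin, $0\in H$; for those, reflection moves points farther from the origin into $H$, and invariance under all of them does force $u=u^*$.

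The second gap is that the analytical core of the theorem is deferred rather than proved. Strong $L^2$-compactness of the iterates is exactly the hard point, and your substitute --- the ``set-level analogue'' for $E^n_t=\{u_n>t\}$ --- is the same theorem specialized to characteristic functions, not an easier statement. The monotonicity of $\int_{E^n_t}e^{-|x|^2}\,dx$ under polarization by halfspaces containing $0$ does hold and shows this quantity converges, but it does not show that the limit is the maximal value $\int_{(E_t)^*}e^{-|x|^2}\,dx$, nor that $|E^n_t\triangle (E_t)^*|\to 0$; closing that loop is precisely where Van Schaftingen (and, in the adaptive-selection variant, Brock--Solynin) have to work, using either a quantitative ``almost invariant under all polarizations $\Rightarrow$ close to the ball'' step or a carefully constructed subsequence. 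Likewise, your claim that $\|u_{n_j+1}-u_{n_j}\|_{L^2}\to 0$ along a diagonal subsequence is asserted, not derived from any monotone quantity. So as written the proposal is not a proof; once the polarizer family is corrected and the compactness step is filled in, you would essentially be reproducing the cited argument of \cite{jeanvan}, which is why the paper simply invokes it.
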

    
 \noindent{\bf Proof of Theorem \ref{faber-unique-log}:} 
 Let $\phi_1$ be the positive eigenfunction corresponding to $\tau_1(\Omega)$ such that $\|\phi_1\|_{L^2(\Omega)=1}$. Let $u$  be the zero extension of $\phi_1$. Let $H_n$ be the sequence of polarizers given by Proposition \ref{approxschwarz}. Denote $u_n=P_{H_n H_{n-1}\cdots H_1}(u)$ and  $\Om_n=P_{H_n H_{n-1}\cdots H_1}(\Om)$.  Observe that, $\text{supp}(u_n)=\overline{\Om_n}$ and $\text{supp}(u^*)=\overline{\Om^*}$. Now, by Proposition \ref{approxschwarz}, $u_n\rightarrow u^*$ in $L^2(\R^N)$ 
and hence, $\Om^*\cap \Om_n\neq \emptyset$ for all $n\geq N\in\N$. Since $diam(\Om_n)\leq diam(\Om)$(Proposition \ref{pol_decrs_diam}), there exists a radius $R>0$, such that $\Om,\Om_n\subseteq B_R,\,\,\forall \, n\in\N$, where $B_R$ denotes the ball centered at the origin with radius $R$. By repeatedly applying \eqref{riez_inequality_pol} for $u,u_1,u_2,\cdots u_{n-1}$, we get:
        \begin{align}\label{limsup}
            \iint\limits_{\R^N\;\R^N}  \log \frac{1}{|x-y|}u(x)u(y)dxdy
            &\leq \iint\limits_{\R^N\;\R^N}  \log \frac{1}{|x-y|}u_n(x)u_n(y)dxdy,\quad \forall\, n\in \N.
        \end{align}
       Notice that, up to a subsequence $u_n\rightarrow u^*$ a.e. and $\|u_n\|_{L^\infty(\R^N)}= \|u\|_{L^\infty(\Om)}<+\infty$, by Proposition \ref{prop_regularity}. Now, take the limit in the above inequality (using the dominated convergence theorem) to get:
        \begin{align*}
            \iint\limits_{B_R\;B_R}  \log \frac{1}{|x-y|}u(x)u(y)dxdy 
            &\leq \iint\limits_{B_R\;B_R}  \log \frac{1}{|x-y|}u^*(x)u^*(y)dxdy.
        \end{align*}
        Since $\text{Supp}( u)\subseteq \overline{\Omega}$ and $\text{Supp}( u^*)\subseteq \overline{\Omega^*}$,  we get the required Riesz inequality,
        \begin{equation*}
            \iint\limits_{\Om\;\Om} \log \frac{1}{|x-y|}u(x)u(y)dxdy\leq \iint\limits_{\Om^*\;\Om^*}  \log \frac{1}{|x-y|}u^*(x)u^*(y)dxdy.
        \end{equation*}  
        Furthermore, we have $\|u^*\|_{L^2(\Omega)}=\|u\|_{L^2(\Omega)}=1.$
        Now, by the variational characterization of the largest eigenvalue in $\eqref{pevpos}$, we have: 
        \begin{equation*}
            \tau_1(\Omega)\leq \tau_1(\Omega^*).
        \end{equation*}
    Now suppose $\tau_1(\Omega)= \tau_1(\Omega^*)$.  For any polarizer $H$, we have:
    \begin{equation*}
        \tau_1(\Omega)\leq \tau_1(P_H(\Omega))\text{ and }\tau_1(P_H(\Omega))\leq \tau_1((P_H(\Omega))^*)= \tau_1(\Omega^*).  
    \end{equation*}
   Therefore, $\tau_1(\Omega)= \tau_1(P_H(\Omega))$ for any polarizer  $H$. Thus, by Theorem \ref{fklog}, we have $P_H(\Omega)\cong\Omega $ or $P_H(\Omega)\cong\sigma_H(\Omega )$. From \cite[Lemma 6.3(p.1761 $\&$ p.1774)]{brockpol}, we conclude that, $\Omega\cong\Omega^*$ up to a translation.


    
\qed

     
\noindent Next, we prove the monotonicity of eigenvalues for the eccentric annular regions. Recall, $\Omega_t:=B_R(0)\setminus \overline{B_r(te_1)}$.
\\
 \begin{minipage}[t]{0.55\textwidth} 
     \begin{center}
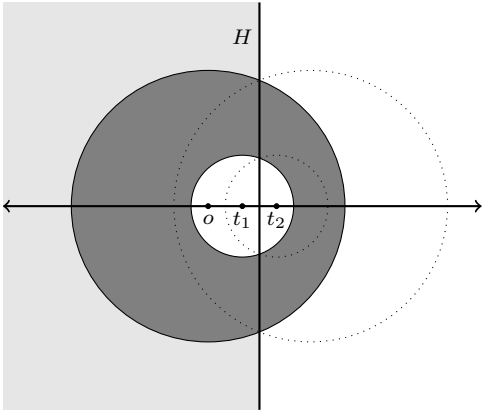

     \captionsetup{type=figure}
     \begin{tikzpicture}[scale=0.9]
        \fill[fill=mygray] (-3.001,-3.001) rectangle (0.75,3.001);
        \draw [fill=gray](0,0) circle (2cm); 
        \draw [fill=white](0.5,0) circle (0.75cm);
        \draw[color=black,dotted] (1,0) circle [radius=0.75];
         \draw[color=black,dotted] (1.5,0) circle [radius=2];
        \draw[<->, thick] (-3.001,0) -- (4,0);      
        \draw[-, thick] (0.75,-3.001) -- (0.75,3.001);
        \filldraw[black] (0,0) circle (1pt);
        \filldraw[black] (0.5,0) circle (1pt);
        \filldraw[black] (1,0) circle (1pt);
        \begin{scriptsize}    
            \draw (0,-0.2) node {$o$};
            \draw (0.5,-0.2) node {$t_1$};    
            \draw (1,-0.2) node {$t_2$}; 
            \draw (0.5,2.5) node {$H$};
        \end{scriptsize}
     \end{tikzpicture}
     \captionof{figure}{Shaded region is $\Omega_{t_1}$}
 \end{center}
 \end{minipage}
 \begin{minipage}[t]{0.45\textwidth} 
 \begin{center}
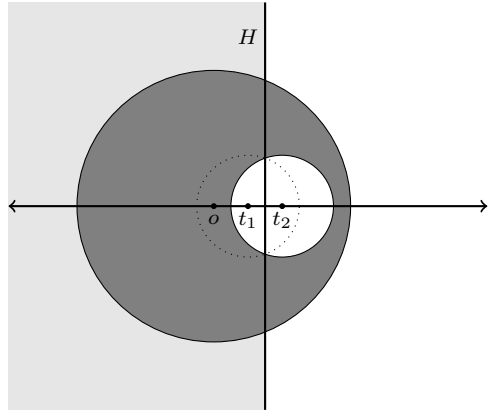

 \captionsetup{type=figure}
     \begin{tikzpicture}[scale=0.9]
        \fill[fill=mygray] (-3.001,-3.001) rectangle (0.75,3.001);
        \draw [fill=gray](0,0) circle (2cm); 
        \draw [fill=white](1,0) circle (0.75cm);
        \draw[color=black,dotted] (0.5,0) circle [radius=0.75];
        \draw[<->, thick] (-3.001,0) -- (4,0);      
        \draw[-, thick] (0.75,-3.001) -- (0.75,3.001);
        \filldraw[black] (0,0) circle (1pt);
        \filldraw[black] (0.5,0) circle (1pt);
        \filldraw[black] (1,0) circle (1pt);
        \begin{scriptsize}    
            \draw (0,-0.2) node {$o$};
            \draw (0.5,-0.2) node {$t_1$};    
            \draw (1,-0.2) node {$t_2$}; 
            \draw (0.5,2.5) node {$H$};
        \end{scriptsize}
     \end{tikzpicture}
     \captionof{figure}{$P_H(\Omega_{t_1})=\Omega_{t_2}$}
 \end{center}
 \end{minipage}
 \\
 \\
 \noindent{\bf Proof of Theorem \ref{corlog}:} Let $t_1<t_2$ and $t_1,t_2\in [0,R-r)$. Let $H=\{(x_1,x_2):x_1<\frac{t_1+t_2}{2}\}$. Observe that, $P_H(\Omega_{t_1})=\Omega_{t_2}$, $\Omega_{t_2}\ncong\Omega_{t_1}$ and $\Omega_{t_2}\ncong\sigma_H(\Omega_{t_1})$. Since $R<\frac{1}{2}$, it follows that $diam(\Om_t)<1$ for all $t$, and by Proposition \ref{prop-non-neg}, $\Om_t$ admits a positive eigenfunction. Therefore, from Theorem \ref{fklog}, we obtain 
 \begin{equation*}
     \tau_1(\Omega_{t_1})< \tau_1(P_H(\Omega_{t_1}))=\tau_1(\Omega_{t_2}).
 \end{equation*}
 This concludes the proof.\qed


\noindent Next, we will prove the theorems related to the negative eigenvalue on discs $B_R$ with $R>1$.
 \noindent{\bf Proof of Theorem \ref{radial_small_version}:}
Let $R>1$ and  let $\Tilde{u}$ be an eigenfunction corresponding to the negative eigenvalue $\Tilde{\tau}_1(B_R)$. Then, $\Tilde{u}(x)=\Phi(|x|)$  (Theorem \ref{radial}) and 
 $\Phi$ solves the modified Bessel equation(Lemma \ref{ball_lemma1}),
$$r^2\Phi''+r\Phi'-\la r^2\Phi=0,$$ with the boundary condition 
\begin{equation}
    \Phi(R)-R\log R \Phi'(R)=0,
\end{equation}
where $\Tilde{\tau}_1(B_R)=-\frac{1}{\la}.$
 Therefore,  $\Phi(r)=cI_0(\sqrt{\la} r)$, where $I_0$ is the modified Bessel function given in \eqref{I_0_series}), and $\lambda$ is such that 
\begin{equation*}
    I_0(\sqrt{\la} R)-\sqrt{\la} R\log R I_0'(\sqrt{\la} R)=0.
\end{equation*}
By Proposition \ref{bessel_prop_unique},  there exists a unique  zero of the function $I_0(t)-t\log R I_0'(t)$, say $\mu_0(B_R)$. Thus, $\sqrt{\la}R=\mu_0(B_R)$ and hence, 
    \begin{equation*}
        \Tilde{\tau}_1(B_R)=-\frac{R^2}{\mu_{0}(B_R)^2}\text{ and } \Tilde{u}(x)=\Phi(|x|)=cI_0\left(\frac{\mu_{0}(B_R)}{R}|x|\right),\quad c\in\R.
    \end{equation*}
Now, for $R_1<R_2$, again by Proposition \ref{bessel_prop_unique}, we must have:
 $$\mu_0(B_{R_2})<\mu_0(B_{R_1}).$$ 


\qed

\noindent{\bf Proof of Theorem \ref{asymptotic_small_version}:}
Recall, from Theorem \ref{radial_small_version}, the unique negative eigenvalue is $\Tilde{\tau}_1(B_R)=-\frac{R^2}{(\mu_{0}(B_R))^2}$, where $\mu_0(B_R)$ is the unique zero of $I_0(t)-\log R t I_0'(t)$. 
Let
    \begin{equation*}
        g(t)=\frac{t I_0'(t)}{I_0(t)}.
    \end{equation*}
    Then, $g$ is strictly increasing and attains all values in $[0,\infty)$ exactly once(see Proposition \ref{bessel_prop_unique}). Notice that,  $$g(\mu_0(B_R))=\frac{1}{\log\, R}.$$
    Using Mathematica, one can obtain the Taylor expansion of $g$ about zero as below:
    \begin{equation*}
        g(t)=\frac{t^2}{2}-\frac{t^4}{16}+\frac{t^6}{96}-\frac{11 t^8}{6144}+\cdots
    \end{equation*}
    By truncating the series, we obtain the approximation:
    \begin{equation*}
        g(t)\approx \frac{t^2}{2},  \quad t\text{ is near to } 0.        
    \end{equation*}    
    Considering the integral expression $I_0(t)=\frac{1}{\pi} \int_{-1}^{1} \frac{e^{-t\theta}}{\sqrt{1-\theta^2}}d\theta$ (see \cite[p.237]{nico_temme_bessel}), we obtain
      \begin{equation}\label{i_o-I_1}
        I_0(t)-I_0'(t)=\frac{1}{\pi} \int_{-1}^{1} \frac{(1+\theta)e^{-t\theta}}{\sqrt{1-\theta^2}}d\theta\rightarrow 0\text{ as }t\rightarrow\infty ,
    \end{equation}
    by using the dominated convergence theorem. 
    Therefore, as $t\rightarrow\infty$, we have:
    \begin{equation*}
        I_0(t)\approx I_0'(t)\text{ and hence }g(t)\approx t.
    \end{equation*}
    

    \noindent Thus, we have the following approximation:
    \begin{equation*}
        g(t)\approx \begin{cases}
            \frac{t^2}{2} &  t\text{ is near to } 0,\\
            t & t \text{ is near }\infty.
        \end{cases}
    \end{equation*}
    Consequently, 
    \begin{equation*}
        \mu_{0}(B_R)\approx\begin{cases}
             \frac{1}{\log\, R} & \text{ when  $R$ is near to } 1,\\
             \sqrt{\frac{2}{\log\, R}} & \text{ when $R$ is near $\infty$}.
        \end{cases}
    \end{equation*}
    Therefore, from Theorem \ref{radial_small_version}, we obtain the following approximation for $\Tilde{\tau}_1(B_R)$:
    \begin{equation*}
        \Tilde{\tau}_1(B_R)\approx\begin{cases} 
                    -R^2(\log\, R)^2 & \text{ when  $R$ is near } 1,\\
                 -\frac{R^2 \,\log\, R}{2} & \text{ when $R$ is near $\infty$} .
        \end{cases}
    \end{equation*}\qed
\section{Further properties of eigenvalues}
This section provides further discussions on $\tau_1$ and $\Tilde{\tau}_1$. We analyze the monotonicity of $\tau_1$ under some specific domain variations. Additionally, we provide an example where the Faber-Krahn inequality fails for $\Tilde{\tau}_1$. We also study the strict domain monotonicity of $\tau_1$ and $\Tilde{\tau}_1$.

\subsection{Monotonicity of eigenvalues under domain perturbations}\label{translation_rotation}
Similar to Theorem \ref{corlog}, we study the strict monotonicity of $\tau_1$ by using Theorem \ref{fklog}. The key idea is to express a domain variation in terms of an appropriate set of polarizations so that $P_H(\Om)\ncong \Om$ nor $P_H(\Om)\ncong \sigma_H(\Om)$. Motivated by Theorem 1.5 and Theorem 1.8 of \cite{Anoop-Ashok2023}, we provide examples of such perturbations of punctured open sets of the form $\Om\setminus\mathcal{O}\subset\R^2$.

    
    \begin{example}\textbf{Monotonicity of $\tau_1$ in  punctured domains of the form $\Omega\setminus\mathcal{O}$ under the translation of the obstacle $\mathcal{O}$:}\label{translation_statement}
    Let $h\in \mathbb{S}^{1}$ and let $ H_s=\{x\in\mathbb{R}^2:x\cdot h<s\}$. Let $\Omega,\mathcal{O}$ satisfies the following conditions:
    \begin{enumerate}
        \item $\mathcal{O}\subset \Om$  is a closed set with non zero measure.
        \item $P_{H_0}(\Omega)=\Omega$ and $\mathcal{O}$ is Steiner symmetric w.r.t $\partial H_0$.
        \item $diam(\Om)\leq 1$.
    \end{enumerate}
    The translations of $\mathcal{O}$ in the direction of $h$ are denoted by 
    \begin{equation*}
        \mathcal{O}_s=sh+\mathcal{O},\quad s\in\R.
    \end{equation*}
     \begin{center}
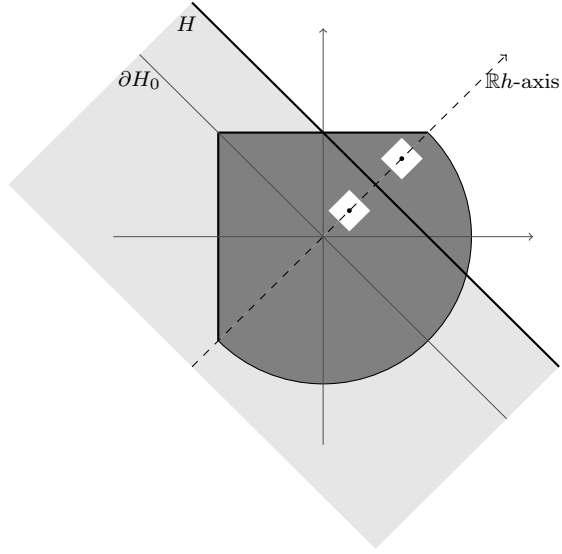

    \captionsetup{type=figure}
        \begin{tikzpicture}[scale=0.69]
            \fill[fill=mygray,shift={(1 cm,1 cm)}] (-3.5,3.5) -- (3.5,-3.5) -- (0,-7) -- (-7,0) -- cycle ;
            \fill[fill=gray] (-2,2) -- (2,2) -- (-2,-2) -- cycle ;
            \draw[-, thick] (-2,2) -- (2,2);
            \draw[-, thick] (-2,2) -- (-2,-2);
            \begin{scope}
                \clip (-3.5,-3.5) -- (0,-7) -- (7,0) -- (3.5,3.5)  -- cycle ;
                \fill[fill=gray] (0,0) circle(2.828427);
                \draw (0,0) circle(2.828427);
             \end{scope}  
             \fill[fill=white] (0.1,0.5) -- (0.5,0.1) -- (0.9,0.5) -- (0.5,0.9) -- cycle ;
             \fill[fill=white] (1.1,1.5) -- (1.5,1.1) -- (1.9,1.5) -- (1.5,1.9) -- cycle ;
              \draw[lblack,->, very thin] (-4,0) -- (4,0);
              \draw[lblack,->, very thin] (0,-4) -- (0,4);
            \draw [->,dashed] (-2.5,-2.5) -- (3.5,3.5);
            \draw [lblack,-,very thin] (-3.5,3.5) -- (3.5,-3.5);
            \draw [-,thick,shift={(1 cm,1 cm)}] (-3.5,3.5) -- (3.5,-3.5);
            \filldraw[black] (0.5,0.5) circle (1pt);
            \filldraw[black] (1.5,1.5) circle (1pt);            
            \begin{scriptsize}    
                \draw (3.8,3) node {$\R h$-axis};
                \draw (-3.5,3) node {$\partial H_0$};
                \draw (-2.6,4.1) node {$H$};
            \end{scriptsize}                
        \end{tikzpicture}
        \captionof{figure}{Translations of the obstacle $\mathcal{O}$ in $h$-direction}
         \label{pic_translation}
    \end{center}    
    We consider domains of the form $\Om_s=\Om\setminus\mathcal{O}_s$. Define $L_\mathcal{O}=\{s\geq 0:P_{H_s}(\Omega)=\Omega,\mathcal{O}_s\subset \Omega\}$. For $s<t$ in $L_{\mathcal{O}}$, we have $P_{H_a}(\Omega_s)=\Omega_t$, where $a=\frac{s+t}{2}$.  One can observe that, $\Om_t\ncong \Om_s$ nor $\Om_t\ncong \sigma_{H_a}(\Om_s)$. By Theorem \ref{fklog}, the largest eigenvalue increases monotonically in $L_{\mathcal{O}}$, 
    \begin{equation*}
        \tau_1(\Omega_s)<\tau_1(P_{H_a}(\Omega_s))=\tau_1(\Omega_t).
    \end{equation*}

\end{example}

\begin{example}\textbf{Monotonicity of $\tau_1$ in  punctured domains  under the rotation of the obstacle $\mathcal{O}$:}\label{rotation_statement}
 Assume that $\Omega$ and $\mathcal{O}$ satisfy the following conditions:
\begin{enumerate}
    \item $\mathcal{O}$ is a closed set with nonzero measure.
    \item $\Omega$ and $\mathcal{O}$ are foliated Schwarz symmetric with respect to the $x$-axis.
    \item $\Omega$ is not radial.
    \item $\text{diam}(\Omega) \leq 1$.
\end{enumerate}
\noindent Let
    \begin{equation*}
        \mathcal{O}_\theta =\left\{\begin{bmatrix}
        \cos\theta & -\sin\theta \\
        \sin\theta & \cos\theta
        \end{bmatrix}\begin{bmatrix}
        x_1  \\
        x_2
        \end{bmatrix} : (x_1,x_2)\in\mathcal{O}  \right\},
    \end{equation*}
    and define
    $\Om_\theta=\Om\setminus\mathcal{O_\theta}$.
    
   \begin{center}
  \captionsetup{type=figure}
      \begin{tikzpicture}
      \filldraw[color=white, fill=medgray, rotate=60](-3.5,-3.5) rectangle (3.5,0);
     \filldraw[color=white, fill=gray,  thin](0,0) circle (3);
     \def\circ1{(0,0) circle (3)};
     \begin{scope}
         \clip\circ1;
         \filldraw[ fill=white,  thick](-3,0) circle (3);
     \end{scope}
     \def\circ2{(-3,0) circle (3)};
     \begin{scope}
         \clip\circ2;
         \filldraw[color=white, fill=medgray, rotate=60](-3.5,-3.5) rectangle (3.5,0);
     \end{scope}
     \fill[fill=white] (1.6,0) -- (2,-0.4) -- (2.4,0) -- (2,0.4) -- cycle ;
     \fill[fill=white,rotate=40] (1.6,0) -- (2,-0.4) -- (2.4,0) -- (2,0.4) -- cycle ;
     \fill[fill=white,rotate=80] (1.6,0) -- (2,-0.4) -- (2.4,0) -- (2,0.4) -- cycle ;
      \draw[lblack,->, very thin] (0,0) -- (3.5,0);
      \draw[lblack,->, very thin,rotate=40] (0,0) -- (3.5,0);
       \draw[lblack,->, very thin,rotate=80] (0,0) -- (3.5,0);
       \draw[->, very thick,rotate=60] (-3.5,0) -- (3.5,0);
     \draw (-0.20,0) node {$0$};
     \draw (4.25,0) node {$x$-axis};
     \draw[black] (3,0) arc (0:120:3);
     \draw[black] (3,0) arc (0:-120:3);
     \draw[black] (0,0) arc (0:-60:3);
     \draw [->,dashed] (2,0) arc (0:100:2);
    \coordinate (A) at (2,0);
    \coordinate (B) at (0,0);
    \coordinate (C) at (1,1);
    \coordinate (D) at (0.174,0.985);
    \pic [draw, ->,angle radius=5mm, "$\theta_1$", angle eccentricity=1.5] {angle = A--B--C};
   \pic [draw, ->,angle radius=9mm, "$\theta_2$", angle eccentricity=1.25] {angle = A--B--D};
  \draw (1.95,3.23) node {$\frac{\theta_1+\theta_2}{2}$};
     \draw (3.25,1.75) node {$H$};
       
      \end{tikzpicture}
      
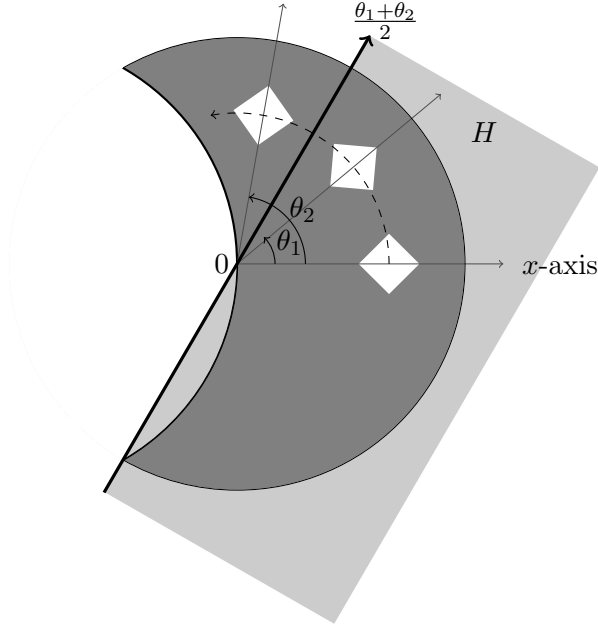
\captionof{figure}{Rotations with respect to $a$}
      \label{pic_rotation}
  \end{center}   
   \noindent Let
   \begin{equation*}
        I_\mathcal{O}=\{\theta\in[0,\pi]: \mathcal{O}_\theta\subset \Omega\}.
    \end{equation*} 
    For $\theta_1,\theta_2\in I_{\mathcal{O}}$ with $\theta_1<\theta_2$, let $H_\theta$ be the polarizer obtained by rotating the lower half plane by an angle $\frac{\theta_1 + \theta_2}{2}$ in the anti-clockwise direction.
\noindent Then, one can verify that  $P_{H_\theta}(\Om_{\theta_1})=\Om_{\theta_2}$. Clearly,  $\Om_{\theta_2}\ncong\Om_{\theta_1}$ and  $\Om_{\theta_2}\ncong \sigma_{H_\theta}(\Om_{\theta_1})$. Therefore, by Theorem \ref{fklog}, we have:
     \begin{equation}
            \tau_1(\Omega_{\theta_1})< \tau_1(P_{H_\theta}(\Omega_{\theta_1}))= \tau_1(\Omega_{\theta_2}).
        \end{equation}  
\end{example}
In the above examples, observe that the transfinite diameter of the domains was fixed. Next, we study the monotonicity of $\Tilde{\tau}_1$ with respect to the diameter.

\begin{example}\label{as_distance_increases_disconnected}
    Let $B_1(\mathbf{0})$ and $B_1(d\mathbf{\mathbf{e_1}})$ be two disjoint balls and let $\Om_d= B_1(\mathbf{0})\cup  B_1(d\mathbf{e_1})$, where $d$ is sufficiently large. 
     \begin{center}
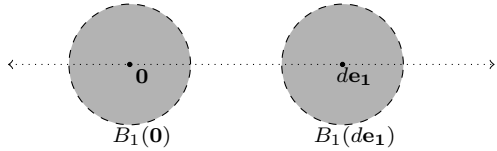

     \captionsetup{type=figure}
     \begin{tikzpicture}[scale=0.8]
            \fill [fill=dgray](0,0) circle (1cm); 
            \draw[color=black,dashed] (0,0) circle [radius=1];
            \fill [fill=dgray](3.5,0) circle (1cm); 
            \draw[color=black,dashed] (3.5,0) circle [radius=1];
             \draw[<->, dotted] (-2,0) -- (6,0);
             \filldraw[black] (0,0) circle (1pt);
             \filldraw[black] (3.5,0) circle (1pt);
               \begin{scriptsize}    
                    \draw (0.2,-0.2) node {$\mathbf{0}$};
                    \draw (3.7,-0.2) node {$ d\mathbf{e_1}$};
                    \draw (0.2,-1.2) node {$ B_1(\mathbf{0})$};
                    \draw (3.7,-1.2) node {$B_1(d\mathbf{e_1})$}; 
            \end{scriptsize}
     \end{tikzpicture}
     \captionof{figure}{$\Om_{d}= B_1(\mathbf{0})\cup B_1(d\mathbf{e_1})$}
 \end{center}

\noindent Define the map $T:\Om_{d_2}\rightarrow\Om_{d_1}$ by
$$T(x)=\begin{cases}
    x,&x\in  B_1(\mathbf{0}),\\
    x-(d_2-d_1)\mathbf{e_1},&x\in B_1(d_2\,\mathbf{e_1}).
\end{cases}$$
Let $u$ be the normalized positive eigenfunction corresponding to $\Tilde{\tau}_1(\Om_{d_1})$. Define $\Tilde{u}$ on $\Om_{d_2}$,
$$\Tilde{u}(x)=u\circ T(x),\quad x\in \Om_{d_2}.$$ Notice that $\|\Tilde{u}\|_{\Om_{d_2}}=\|u\|_{\Om_{d_1}}=1$. Moreover, for any $x,y\in\Om_{d_2}$, we have  $|x-y|\geq |T(x)-T(y)|$. In particular, for $x\in B_1(\mathbf{0})$ and $y\in B_1(d_2\,\mathbf{e_1})$, it holds that $|x-y|> |T(x)-T(y)|$, this leads to:
\begin{equation}\label{dist_eqn1}
    \frac{1}{2\pi}\iint\limits_{\Om_{d_2}\;\Om_{d_2}} \log \frac{1}{|x-y|}\Tilde{u}(x)\Tilde{u}(y)dxdy < \frac{1}{2\pi}\iint\limits_{\Om_{d_2}\;\Om_{d_2}} \log \frac{1}{|T(x)-T(y)|}\Tilde{u}(x)\Tilde{u}(y)dxdy.
\end{equation}
By applying the change of variables, we obtain:
\begin{align}\label{dist_eqn2}
    \frac{1}{2\pi}\iint\limits_{\Om_{d_2}\;\Om_{d_2}} \log \frac{1}{|T(x)-T(y)|}\Tilde{u}(x)\Tilde{u}(y)dxdy &=\frac{1}{2\pi}  \iint\limits_{\Om_{d_1}\;\Om_{d_1}} \log \frac{1}{|x-y|}u(x)u(y)dxdy\nonumber\\
   &= \Tilde{\tau}_1(\Om_{d_1})    .  
\end{align}
 From the inequalities \eqref{dist_eqn1} and \eqref{dist_eqn2}, we get
 \begin{equation*}
     \Tilde{\tau}_1(\Om_{d_2})<\Tilde{\tau}_1(\Om_{d_1}).
 \end{equation*}
Next, consider the constant function $c=\frac{1}{\sqrt{2\pi}}$ on $\Om_d$, in \eqref{pevneg}
\begin{align}\label{dist_to_infty}
     \Tilde{\tau}_1(\Om_{d})    
     &\leq \frac{1}{4\pi^2}\iint\limits_{ B_1(\mathbf{0})\; B_1(\mathbf{0})} \log \frac{1}{|x-y|}dxdy +\frac{1}{2}\log \frac{1}{d-2}
     +\frac{1}{4\pi^2}\iint\limits_{  B_1(d\mathbf{e_1})\;  B_1(d\mathbf{e_1})} \log \frac{1}{|x-y|}dxdy\nonumber\\
 \end{align}
 As $d\rightarrow\infty$, the second term in \eqref{dist_to_infty} tends to $-\infty$, while the first and third terms remain the same for any $d$. Thus, we conclude that  $\Tilde{\tau}_1(\Om_{d})\rightarrow-\infty$. 

 \end{example}
In the previous example, we discussed disconnected open sets. The following example shows that, even in the class of domains with transfinite diameter bigger than one and fixed measure, $\Tilde{\tau}_1(\Om)$ is not bounded below. This will imply that Faber-Krahn inequality fails for $\Tilde{\tau}_1$.


\begin{example}\textbf{(Faber-Krahn inequality fails for $\Tilde{\tau}_1$)}\label{as_distance_increases_connected}
      For $c>0$, we construct a class of domains and show that
 \begin{equation*}
\inf\{\Tilde{\tau}_1(\Om):|\Om|=c \text{ and } T_{diam}(\Om)>1\}=-\infty.
\end{equation*}
      Let $S_1,S_d$ be the squares centred at the origin and at $(d,0)$ with the measures $\frac{c}{3}$. Let $P_d$ 
     be the rectangle with  measure $\frac{c}{3}$ and connects $S_1$ and $S_d$. For sufficiently large $d$, define $\Om_d=S_1\cup P_{d}\cup S_d$ as shown in Fig. \ref{pic_square}.
     
 \begin{center}
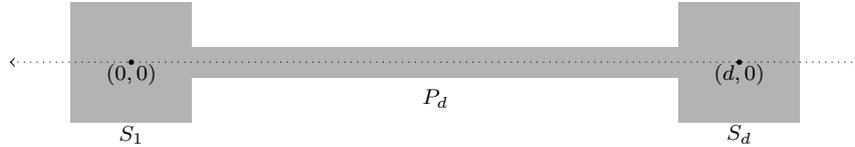

     \captionsetup{type=figure}
     \begin{tikzpicture}[scale=0.8]
             \fill [fill=dgray](-1,-1) rectangle (1,1); 
             \fill [fill=dgray](1,-0.25) rectangle (9,0.25); 
             \fill [fill=dgray](9,-1) rectangle (11,1); 
            
             \draw[<->, dotted] (-2,0) -- (12,0);
             \filldraw[black] (0,0) circle (1pt);
             \filldraw[black] (10,0) circle (1pt);
               \begin{scriptsize}    
                    \draw (0,-1.2) node {$S_1$};
                    \draw (5,-0.6) node {$P_{d}$};    
                    \draw (10,-1.2) node {$S_d$}; 
                    \draw (0,-0.2) node {$(0,0)$};
                    \draw (10,-0.2) node {$(d,0)$};
            \end{scriptsize}
     \end{tikzpicture}
     \captionof{figure}{$\Om_{d}=S_1\cup P_{d}\cup S_d$}
     \label{pic_square}
 \end{center}
\noindent Consider $u_d(x) =\sqrt{\frac{3}{2c}}\,\,\chi_{S_1\cup S_d}$. Now, \eqref{pevneg} yields 
\begin{align*}
  \Tilde{\tau}_1(\Om_d) 
     &\leq \frac{3}{4\pi c}\iint\limits_{S_1\; S_1} \log \frac{1}{|x-y|}dxdy + \frac{c}{6\pi}\log \frac{1}{d-2}
     +\frac{3}{4\pi c}\iint\limits_{S_d\; S_d} \log \frac{1}{|x-y|}dxdy  
\end{align*}
Integrals in the first and third terms remain the same for any $d$, while the second term tends to $-\infty$ as $d\rightarrow \infty$. Therefore, 
     \begin{equation}
         \lim_{d\rightarrow\infty} \Tilde{\tau}_1(\Om_d) =-\infty.
     \end{equation}
\end{example}

\subsection{The Domain monotonicity}

\par   The strict domain monotonicity of $\Tilde{\tau}_1$ was already established in \cite[Theorem 1]{troutman_planar}. Here, we provide an alternative proof of this result. In \cite{estimation_proceedings}, the domain monotonicity of $\tau_1$ is established. We prove the strict domain monotonicity of $\tau_1$. Before formally stating these results, we first prove an important property of the eigenpairs $(\tau_1, w)$ and $(\Tilde{\tau}_1, \Tilde{w})$.

\begin{proposition}\label{rayleighquotient_pos_iff}
    Let $w,\Tilde{w}\in L^2(\Omega)$ such that $\|w\|_{L^2(\Om)}=\|\Tilde{w}\|_{L^2(\Om)}=1$ and let $$E(w)=\tau_1(\Om)\text{ and } E(\Tilde{w})=\Tilde{\tau}_1(\Om).$$ Then, $w$ is an eigenfunction corresponding to $\tau_1(\Om)$ and $\Tilde{w}$ is an eigenfunction corresponding to $\Tilde{\tau}_1(\Om)$.
\end{proposition}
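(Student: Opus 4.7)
The plan is to carry out a standard first-variation (Lagrange multiplier) argument on the Rayleigh quotient associated with the quadratic form $E$. The crucial observation is that $E$ is the quadratic form of a symmetric bilinear form
\begin{equation*}
    B(u,v) \dq \frac{1}{2\pi}\iint_{\Om\,\Om}\log\frac{1}{|x-y|}\,u(x)v(y)\dx\dr y,
\end{equation*}
which is well-defined on $L^2(\Om)\times L^2(\Om)$ because $\Om$ is bounded and the kernel $\log\frac{1}{|x-y|}$ is locally integrable on $\R^2\times\R^2$. In particular, one has the polarization identity
$E(w+t\phi) = E(w) + 2tB(w,\phi) + t^2 E(\phi)$
for every $\phi \in L^2(\Om)$ and every $t\in\R$.

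Fix an arbitrary test function $\phi \in L^2(\Om)$. The first step is to define, for $t$ in a neighborhood of $0$ small enough that $w+t\phi \not\equiv 0$,
\begin{equation*}
    f(t) \dq \frac{E(w+t\phi)}{\|w+t\phi\|_{L^2(\Om)}^2}.
\end{equation*}
Since $w$ attains the supremum in \eqref{pevpos} and the quotient $E(u)/\|u\|_{L^2}^2$ is scale-invariant, we have $f(t)\le \tau_1(\Om) = f(0)$ for all admissible $t$. Hence $t=0$ is an interior maximum of $f$, so $f'(0)=0$. A direct computation using the polarization identity and $\|w\|_{L^2(\Om)}=1$ yields
\begin{equation*}
    f'(0) = 2B(w,\phi) - 2\tau_1(\Om)\int_{\Om} w(x)\phi(x)\dx.
\end{equation*}
Setting this to zero and recalling the definition of $B$, we obtain
\begin{equation*}
    \frac{1}{2\pi}\iint_{\Om\,\Om}\log\frac{1}{|x-y|}w(x)\phi(y)\dx\dr y \;=\; \tau_1(\Om)\int_{\Om} w(x)\phi(x)\dx
\end{equation*}
for every $\phi\in L^2(\Om)$, which is precisely the variational characterization \eqref{var_char}. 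Therefore $(\tau_1(\Om),w)$ is an eigenpair of $\Lom$.

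The argument for $\Tilde{w}$ is identical with the roles of supremum and infimum reversed: $\Tilde{w}$ minimizes $E(u)/\|u\|_{L^2}^2$ by \eqref{pevneg}, so the analogue of $f$ has an interior minimum at $t=0$, the derivative vanishes, and the same calculation yields $B(\Tilde{w},\phi) = \Tilde{\tau}_1(\Om)\langle \Tilde{w},\phi\rangle_{L^2(\Om)}$ for every $\phi$, showing that $\Tilde{w}$ is an eigenfunction for $\Tilde{\tau}_1(\Om)$. There is no genuine obstacle here; the only mild subtlety is verifying that the bilinear form $B$ is jointly continuous on $L^2(\Om)\times L^2(\Om)$ (needed to justify differentiation of $f$), which follows from Fubini together with the boundedness of $\Om$ and the local $L^p$ integrability of the logarithmic kernel on $\R^2\times\R^2$ for any $p<\infty$.
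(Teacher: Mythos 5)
Your argument is correct and follows essentially the same route as the paper: expand $E(w+t\phi)$ via the symmetric bilinear form, use that $t=0$ is a critical point of the Rayleigh-type quotient, and conclude the weak eigenvalue equation \eqref{var_char} for every test function. The paper phrases the first variation with a one-sided limit $t\to 0^+$ followed by the substitution $v\mapsto -v$ rather than differentiating $f$ directly, but the two calculations are identical in content.
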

\begin{proof}
      Let $v\in L^2(\Om)$ and $t>0$. 
  Then, $w+tv\in L^2(\Om)$ and by $\eqref{pevpos}$ we get
  \begin{equation*}
      \frac{E (w+tv) }{\int_\Om (w+tv)^2(x)dx}\leq \tau_1(\Om).
  \end{equation*}
  Noting that $w$ is an eigenfunction corresponding to $\tau_1(\Om)$, by expanding $E (w+tv)$, we get:
  \begin{equation*}
       \frac{t}{\pi}\iint\limits_{\Om\;\Om}\log\frac{1}{|x-y|} w(x)v(y)dxdy + \frac{t^2}{2\pi}\iint\limits_{\Om\;\Om}\log\frac{1}{|x-y|} v(x)v(y)dxdy
  \end{equation*}
  \begin{equation}
      \leq \tau_1(\Om)\left[ 2t\int_\Om w(x)v(x)dx+t^2 \int_\Om v(x)^2dx\right].
  \end{equation}
Now,  divide by $2t$ and let $t\rightarrow 0$ to obtain:  
  \begin{equation}
      \frac{1}{2\pi}\iint\limits_{\Om\;\Om}\log\frac{1}{|x-y|} w(x)v(y)dxdy\leq\tau_1(\Om) \int_\Om w(x)v(x)dx\,\; \forall\, v\in L^2(\Omega).
  \end{equation}
  Substituting $-v$ instead of $v$ yields the other way inequality. Thus by $\eqref{var_char}$, $w$ is an eigenfunction corresponding to $\tau_1(\Om)$. Similarly, it can be shown that $\Tilde{w}$ is an eigenfunction corresponding to $\Tilde{\tau}_1(\Om)$.
\end{proof}

\noindent Next, we prove the strict domain monotonicity for $\tau_1$.
\begin{theorem}
    Let $ \Om_1\subseteq \Om_2$. Then,
    \begin{equation}
        \tau_1( \Om_1)\leq \tau_1(\Om_2).
    \end{equation}
    Furthermore, if $diam(\Om_2)\leq 1$ and $|\Om_2\setminus  \Om_1|\neq 0$, then 
    \begin{equation}
        \tau_1( \Om_1)<\tau_1(\Om_2).
    \end{equation}    
\end{theorem}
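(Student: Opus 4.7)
The plan is to split the proof into the non-strict inequality, which follows cleanly from the variational characterization \eqref{pevpos}, and a strictness upgrade that leverages the positive eigenfunction guaranteed by Proposition \ref{prop-non-neg}.

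For the non-strict part, I would pick any $u \in L^2(\Omega_1)$ with $\|u\|_{L^2(\Omega_1)} = 1$ and let $\widetilde{u} \in L^2(\Omega_2)$ denote its zero extension. Then $\|\widetilde{u}\|_{L^2(\Omega_2)} = 1$ and $E(\widetilde{u}) = E(u)$, because the integrand of $E$ vanishes whenever either variable lies outside $\Omega_1$. Applying \eqref{pevpos} on $\Omega_2$ gives $E(u) \le \tau_1(\Omega_2)$, and taking the supremum over $u$ yields $\tau_1(\Omega_1) \le \tau_1(\Omega_2)$.

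For the strictness, I would assume for contradiction that $\tau_1(\Omega_1) = \tau_1(\Omega_2)$. Since $diam(\Omega_1) \le diam(\Omega_2) \le 1$, Proposition \ref{prop-non-neg} supplies an eigenfunction $\phi_1$ of $\tau_1(\Omega_1)$ that is positive throughout $\Omega_1$, normalized so that $\|\phi_1\|_{L^2(\Omega_1)} = 1$. Let $\widetilde{\phi}_1$ be its zero extension to $\Omega_2$; then $\|\widetilde{\phi}_1\|_{L^2(\Omega_2)} = 1$ and $E(\widetilde{\phi}_1) = E(\phi_1) = \tau_1(\Omega_1) = \tau_1(\Omega_2)$, so by Proposition \ref{rayleighquotient_pos_iff} the function $\widetilde{\phi}_1$ is an eigenfunction of $\mathcal{L}_{\Omega_2}$ corresponding to $\tau_1(\Omega_2)$. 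Invoking the pointwise eigenvalue equation of Proposition \ref{prop_regularity} at any $x_0 \in \Omega_2 \setminus \Omega_1$ (which exists because $|\Omega_2 \setminus \Omega_1| > 0$) gives
$$ 0 \;=\; \tau_1(\Omega_2)\,\widetilde{\phi}_1(x_0) \;=\; \frac{1}{2\pi} \int_{\Omega_1} \log\frac{1}{|x_0 - y|}\,\phi_1(y)\,dy. $$
Because $|x_0 - y| \le diam(\Omega_2) \le 1$ for every $y \in \Omega_1$, the kernel $\log\frac{1}{|x_0 - y|}$ is nonnegative on $\Omega_1$ and vanishes only on the circle $\{y : |x_0 - y| = 1\}$, which has planar Lebesgue measure zero. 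Combined with $\phi_1 > 0$ on $\Omega_1$ and $|\Omega_1| > 0$, this forces the right-hand side above to be strictly positive, yielding a contradiction.

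The crucial step is Proposition \ref{rayleighquotient_pos_iff}, which promotes $\widetilde{\phi}_1$ from an $L^2$ function attaining the supremum into a genuine eigenfunction of $\mathcal{L}_{\Omega_2}$ and thereby unlocks the pointwise equation of Proposition \ref{prop_regularity}. The most delicate point I anticipate is the borderline case $diam(\Omega_2) = 1$: there the kernel is merely nonnegative, and strict positivity of the integral relies on the observation that its zero set is a single circle, hence of planar measure zero. If the diameter bound were dropped, the kernel $\log\frac{1}{|x-y|}$ could change sign on $\Omega_1$ and this positivity argument would collapse, which is exactly why the hypothesis $diam(\Omega_2) \le 1$ is indispensable.
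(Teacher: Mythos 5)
Your proof is correct and follows essentially the same route as the paper: the non-strict inequality comes from zero extension plus the variational characterization \eqref{pevpos}, and strictness comes from combining Proposition \ref{rayleighquotient_pos_iff} with the non-vanishing property of eigenfunctions of $\tau_1(\Omega_2)$ when $\mathrm{diam}(\Omega_2)\le 1$. The only difference is cosmetic: where the paper cites Proposition \ref{prop-non-neg} directly to conclude that the zero extension $\widetilde{\phi}_1$ (which vanishes on the positive-measure set $\Omega_2\setminus\Omega_1$) cannot be an eigenfunction of $\tau_1(\Omega_2)$, you assume equality, promote $\widetilde{\phi}_1$ to an eigenfunction via Proposition \ref{rayleighquotient_pos_iff}, and then re-derive the non-vanishing contradiction by hand from the pointwise integral identity of Proposition \ref{prop_regularity} — the contrapositive of the same argument.
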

\begin{proof}
Let $u$ be an eigenfunction corresponding to $\tau_1( \Om_1)$ and $\Tilde{u}$ be its zero extension to $\Om_2$. Then,
   \begin{align*}
       \tau_1( \Om_1)&=\frac{1}{2\pi}\iint\limits_{ \Om_1\; \Om_1}\log\frac{1}{|x-y|} u(x)u(y)dxdy=\frac{1}{2\pi}\iint\limits_{\Om_2\;\Om_2}\log\frac{1}{|x-y|} \Tilde{u}(x)\Tilde{u}(y)dxdy\leq \tau_1(\Om_2)
   \end{align*}
   Moreover,  as $|\Om_2\setminus  \Om_1|\neq 0$ and $diam(\Om_2)\leq 1$, by Proposition \ref{prop-non-neg}, $\Tilde{u}$ cannot be an eigenfunction corresponding to $\tau_1(\Om_2)$. Therefore, by Proposition \ref{rayleighquotient_pos_iff},
   \begin{align*}
       \tau_1( \Om_1)&=\frac{1}{2\pi}\iint\limits_{\Om_2\;\Om_2}\log\frac{1}{|x-y|} \Tilde{u}(x)\Tilde{u}(y)dxdy<\tau_1(\Om_2).
   \end{align*}
\end{proof}

\noindent Next, we prove strict domain monotonicity for $\Tilde{\tau}_1(\Om)$.
\begin{theorem}\label{domain_monotonicity_tilde}
    Let $ \Om_1\subseteq \Om_2$ and let $T_{diam}( \Om_1)>1$. Then
    \begin{equation}
        \Tilde{\tau}_1( \Om_1)\geq \Tilde{\tau}_1(\Om_2).
    \end{equation}
    Moreover, if $|\Om_2\setminus  \Om_1|\neq 0$, then we have:
    \begin{equation}
        \Tilde{\tau}_1( \Om_1)>\Tilde{\tau}_1(\Om_2).
    \end{equation}
\end{theorem}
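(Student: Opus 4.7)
The plan is to mirror the proof of strict domain monotonicity for $\tau_1$ given immediately above, adapting it to the negative eigenvalue regime by invoking the Troutman positivity/uniqueness result cited in Remark~\ref{Rem:Suragan}(iii). First I would note that the transfinite diameter is monotone under set inclusion, so the hypothesis $T_{diam}(\Om_1)>1$ automatically gives $T_{diam}(\Om_2)>1$; hence $\L_{\Om_2}$ is also not a positive operator and it too possesses a (unique) negative eigenvalue $\Tilde{\tau}_1(\Om_2)$, with a sign-definite eigenfunction.

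For the weak inequality, I would take an eigenfunction $\Tilde{u}$ for $\Tilde{\tau}_1(\Om_1)$, normalized so that $\|\Tilde{u}\|_{L^2(\Om_1)}=1$, and let $v$ denote its extension by zero to $\Om_2$. Then $\|v\|_{L^2(\Om_2)}=1$ and a direct computation shows
\begin{equation*}
    E(v)=\frac{1}{2\pi}\iint\limits_{\Om_2\;\Om_2}\log\frac{1}{|x-y|}v(x)v(y)\,dx\,dy=\frac{1}{2\pi}\iint\limits_{\Om_1\;\Om_1}\log\frac{1}{|x-y|}\Tilde{u}(x)\Tilde{u}(y)\,dx\,dy=\Tilde{\tau}_1(\Om_1).
\end{equation*}
The variational characterization \eqref{pevneg} then immediately yields $\Tilde{\tau}_1(\Om_2)\le E(v)=\Tilde{\tau}_1(\Om_1)$.

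For the strict inequality, I would argue by contradiction: if $\Tilde{\tau}_1(\Om_2)=\Tilde{\tau}_1(\Om_1)$, then the test function $v$ realizes the infimum defining $\Tilde{\tau}_1(\Om_2)$, so Proposition~\ref{rayleighquotient_pos_iff} forces $v$ to be an eigenfunction of $\L_{\Om_2}$ corresponding to $\Tilde{\tau}_1(\Om_2)$. Since $|\Om_2\setminus\Om_1|\neq 0$, the function $v$ vanishes on a set of positive measure inside $\Om_2$. On the other hand, by Troutman's result quoted in Remark~\ref{Rem:Suragan}(iii), the unique negative eigenvalue of $\L_{\Om_2}$ is simple and carries a strictly positive eigenfunction, so every associated eigenfunction is a nonzero scalar multiple of a strictly positive function. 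This contradicts the vanishing of $v$ on a positive measure set. The main obstacle is precisely this last step: one must be sure that simplicity plus positivity of the negative eigenpair rules out any eigenfunction vanishing on a set of positive measure, but this is exactly the content of Troutman's theorem, so the argument closes cleanly.
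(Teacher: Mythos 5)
Your proof is correct and follows essentially the same approach as the paper: zero-extend an eigenfunction for $\Tilde{\tau}_1(\Om_1)$ and apply the variational characterization \eqref{pevneg} for the weak inequality, then combine Proposition~\ref{rayleighquotient_pos_iff} with Troutman's positivity of the negative-eigenvalue eigenfunction to rule out the zero-extension being an eigenfunction on $\Om_2$ when $|\Om_2\setminus\Om_1|\neq 0$. Your contradiction framing of the strict case and your preliminary remark that $T_{diam}$ is monotone under inclusion are only minor stylistic variants of the paper's direct argument.
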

\begin{proof}
     Let $v$ be an eigenfunction corresponding to $\Tilde{\tau}_1( \Om_1)$ and $\Tilde{v}$ be its zero extension to $\Om_2$ and proceed as in the above theorem. Then,
   \begin{align*}
       \Tilde{\tau}_1( \Om_1)
       &\geq\Tilde{\tau}_1(\Om_2).
   \end{align*}
   Since $|\Om_2\setminus  \Om_1|\neq 0$, by \cite[Theorem 1]{troutman1967} $\Tilde{v}$ cannot be an eigenfunction corresponding to  $\Tilde{\tau}_1(\Om_2)$.  Therefore, by Proposition \ref{rayleighquotient_pos_iff},
   \begin{align*}
       \Tilde{\tau}_1( \Om_1)&=\frac{1}{2\pi}\iint\limits_{\Om_2\;\Om_2}\log\frac{1}{|x-y|} \Tilde{v}(x)\Tilde{v}(y)dxdy>\Tilde{\tau}_1(\Om_2).
   \end{align*}
\end{proof}

\begin{remark}
In \cite[Theorem 1.1]{estimation_proceedings}, the authors approximated the positive eigenvalues of domains by considering the eigenvalues of discs, assuming that $\Lom$ is a positive operator. Following a similar approach, we approximate $\Tilde{\tau}_1$ using the negative eigenvalue of discs. Let $\Om$ be a bounded open set and $x,y\in\Om$  such that  $B_{R_1}(x)\subseteq\Om\subseteq B_{R_2}(y)$, where $R_1>1$. 

\begin{center}
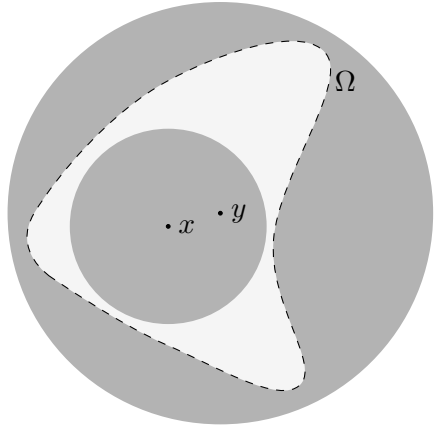

  \captionsetup{type=figure}
  \begin{tikzpicture}[scale=0.7]
        \def\circ1{(1,0) circle (4)};
         \fill[fill=dgray] (1,0) circle (4);
         \coordinate  (O) at (0,0);
        \fill[fill=vlgray]  plot[smooth, tension=0.7] coordinates {(-2.2,-1.2) (-2.5,0) (0,2.5)   (3,3) (2,-0.5) (2.5,-3.25) (0,-2.5)  (-2.2,-1.2) };
        \draw [-,dashed]  plot[smooth, tension=0.7] coordinates {(-2.2,-1.2) (-2.5,0) (0,2.5)   (3,3) (2,-0.5) (2.5,-3.25) (0,-2.5)  (-2.2,-1.2) };
          \fill[fill=dgray] (0.02,-0.25) circle (1.85); 
          \draw [fill] (1,0) circle [radius=1pt];
           \draw (1.35,0) node {$y$};
           \draw [fill] (0.02,-0.25) circle [radius=1pt];
            \draw (0.37,-0.25) node {$x$};
            \draw (3.35,2.5) node {$\Om$};
     \end{tikzpicture}
      \captionof{figure}{$B_{R_1}(x)\subseteq \Om \subseteq B_{R_2}(y)$}
  \end{center}
Then, by Theorem \ref{domain_monotonicity_tilde} and  Theorem \ref{radial_small_version}, we have:
 \begin{equation*}
        -\frac{R_2^2}{(\mu_{0}(B_{R_2}))^2}\leq \Tilde{\tau}_1(\Om)\leq-\frac{R_1^2}{(\mu_{0}(B_{R_1}))^2}.
    \end{equation*}

\noindent In particular, for a domain $\Om$, such that $B_{R-\epsilon}\subseteq \Om\subseteq B_{R+\epsilon} $, for $R>1$ and sufficiently small $\epsilon>0$, $\Tilde{\tau}_1(\Om)$ can be approximated as follows:
        \begin{equation*}
        -\frac{(R+\epsilon)^2}{(\mu_{0}(B_{R+\epsilon}))^2}\leq \Tilde{\tau}_1(\Om)\leq-\frac{(R-\epsilon)^2}{(\mu_{0}(B_{R-\epsilon}))^2}.
    \end{equation*}

\end{remark}

    


\section{Other remarks and open problems}

In this section, we present isoperimetric inequalities for the Riesz potential operator. One can view the Riesz potential operator as a generalization of the Newtonian potential operator. Furthermore, we state some conjectures concerning transfinite diameter and  $\Tilde{\tau}_1$.
\subsection{Riesz potential operator}

 In this subsection, we briefly introduce Riesz potential operator and state the related results. In \cite[Lemma 3.2]{suragan2016riesz}, a Rayleigh–Faber–Krahn theorem for the Riesz potential operator was proven. Recently,  a Faber-Krahn inequality for the Riesz potential operator is established among the classes of triangular domains and quadrilateral domains in \cite[Theorem 1.1]{rajesholiver}. We extend the results for the Logarithmic potential operator to the Riesz potential operator. The proof techniques used for the Logarithmic potential operator also apply to the Riesz potential operator.

\begin{definition}{\textbf{(Riesz potential operator).}}
    Let $\Omega \subseteq \mathbb{R}^N$ be a bounded open set and $N\geq 2$. Let $0<\alpha<N$.
 The Riesz potential operator on $L^2(\Omega)$ is defined by
\begin{align}
    \mathcal{R}u(x)&:=\int_\Omega \frac{u(y)}{|x-y|^{N-\alpha}}dy. 
\end{align}
\end{definition}



$\mathcal{R}$ is a positive, self-adjoint, and compact operator on $L^2(\Omega)$. 
\noindent The eigenpair $(\la,u)$ satisfy the following equation:
\begin{equation}
     \iint\limits_{\Om\;\Om} \frac{u(x)\phi(y)}{|x-y|^{N-\alpha}}dxdy=\la \int_\Omega u(x)\phi(x)dx, \,\; \forall\, \phi\in L^2(\Omega).
\end{equation}
Moreover, the largest eigenvalue,  $\la_1(\Omega)$ has the following variational characterization: 
\begin{equation}\label{pevr}
    \lambda_1(\Om)=\sup \left\{ \iint\limits_{\Om\;\Om} \frac{u(x)u(y)}{|x-y|^{N-\alpha}}dxdy:u\in L^2(\Omega),\|u\|_{L^2(\Omega)}=1\right\}.
    \end{equation}
The corresponding eigenfunction is not sign-changing \cite[ Lemma 3.1]{suraganfirstsecond}. The following reverse Faber-Krahn inequalities under polarization can be proved for $\la_1$ of $\mathcal{R}$. The proof follows from the positivity of the eigenfunction \cite[ Lemma 3.1]{suraganfirstsecond} and Proposition \ref{riezpol}.

\begin{theorem}\label{fkriesz}
        Let $\Omega \subseteq \mathbb{R}^N$ be a bounded open set, $H$ be a polarizer, and $P_H(\Omega)$ be the corresponding polarized domain. Let $N\geq 2$ and $0<\alpha<N$. Then, 
        \begin{equation}\label{poline}
            \lambda_1(\Omega)\leq \lambda_1(P_H(\Omega)).
        \end{equation}
        Furthermore, the equality holds in $\eqref{poline}$ only if $P_H(\Omega)\cong\Omega$(almost equal) or $P_H(\Omega)\cong\sigma_H(\Omega)$(almost equal).
    \end{theorem}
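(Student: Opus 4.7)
The plan is to adapt the proof of Theorem \ref{fklog} essentially verbatim, with the logarithmic kernel replaced by $K(r) = r^{\alpha-N}$, which is strictly decreasing on $(0,\infty)$. The two ingredients needed are: a non-sign-changing eigenfunction associated with $\la_1(\Om)$, provided by \cite[Lemma 3.1]{suraganfirstsecond}, and the Riesz-type inequality under polarization from Proposition \ref{riezpol}.

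First, I invoke \cite[Lemma 3.1]{suraganfirstsecond} to obtain an eigenfunction $\phi_1$ corresponding to $\la_1(\Om)$ that does not change sign; replacing $\phi_1$ by $-\phi_1$ if necessary and normalizing, I assume $\phi_1 \geq 0$ with $\|\phi_1\|_{L^2(\Om)} = 1$. Since the integral equation
\begin{equation*}
\la_1(\Om)\,\phi_1(x) = \int_\Om \frac{\phi_1(y)}{|x-y|^{N-\alpha}}\,dy
\end{equation*}
has a strictly positive kernel, it follows that $\phi_1 > 0$ on $\Om$. Extending $\phi_1$ by zero outside $\Om$, the same reasoning as in Proposition \ref{Riesz on Omega} shows that $P_H(\phi_1)$ vanishes on $P_H(\Om)^c$, and by Proposition \ref{pol_properties}(iii) we have $\|P_H(\phi_1)\|_{L^2(P_H(\Om))} = 1$. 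Applying Proposition \ref{riezpol} with $K(r) = r^{\alpha - N}$ then yields
\begin{equation*}
\iint\limits_{\Om\;\Om} \frac{\phi_1(x)\phi_1(y)}{|x-y|^{N-\alpha}}\,dx\,dy \leq \iint\limits_{P_H(\Om)\;P_H(\Om)} \frac{P_H(\phi_1)(x)\,P_H(\phi_1)(y)}{|x-y|^{N-\alpha}}\,dx\,dy,
\end{equation*}
and combining this with the variational characterization \eqref{pevr} for $\la_1(P_H(\Om))$ gives $\la_1(\Om) \leq \la_1(P_H(\Om))$.

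For the equality case, suppose $\la_1(\Om) = \la_1(P_H(\Om))$. Then the above inequality is in fact an equality. Since $K(r) = r^{\alpha - N}$ is strictly decreasing on $(0,\infty)$, the sharpness clause of Proposition \ref{riezpol} implies that either $P_H(\phi_1) = \phi_1$ a.e.\ on $\R^N$ or $P_H(\phi_1) = \phi_1 \circ \sigma_H$ a.e.\ on $\R^N$. From the strict positivity of $\phi_1$ on $\Om$ one argues as in the proof of Theorem \ref{fklog}: $P_H(\phi_1) \neq \phi_1$ on $P_H(\Om)\triangle \Om$ and $P_H(\phi_1) \neq \phi_1 \circ \sigma_H$ on $P_H(\Om)\triangle \sigma_H(\Om)$, forcing $|P_H(\Om)\triangle \Om| = 0$ or $|P_H(\Om)\triangle \sigma_H(\Om)| = 0$, i.e.\ $P_H(\Om) \cong \Om$ or $P_H(\Om) \cong \sigma_H(\Om)$.

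The only point of friction is that Proposition \ref{riezpol} is phrased for kernels defined on $[0,\infty)$, whereas the Riesz kernel blows up at the origin; this is inconsequential since $\{x = y\}$ is a null set and the local integrability of $r^{\alpha - N}$ under $0 < \alpha < N$ ensures membership of $\phi_1$ and $P_H(\phi_1)$ in the class $\mathcal{X}$ required by Proposition \ref{riezpol}. The pointwise comparison argument in that proposition is insensitive to the value of $K$ at $0$. Apart from this, the proof is a mechanical transcription of the logarithmic case, with the uniform positivity of the Riesz kernel removing the need for any diameter restriction on $\Om$.
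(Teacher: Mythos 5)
Your proof is correct and follows exactly the route the paper indicates: the paper does not write out the argument for Theorem~\ref{fkriesz} but states that it follows from the positivity of the $\la_1$-eigenfunction (via \cite[Lemma 3.1]{suraganfirstsecond}) together with Proposition~\ref{riezpol} applied to the strictly decreasing kernel $K(r)=r^{\alpha-N}$, which is precisely the argument you supply. Your remark that the blowup of $K$ at the origin is harmless and that no diameter restriction is needed (since the Riesz kernel is everywhere positive) correctly identifies the only places where the transcription from the logarithmic case requires any thought.
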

   Consequently, similar to \eqref{fkaliterlog}, one can obtain the following corollary for  $\la_1$ by approximating Schwarz symmetrization via polarization.
\begin{corollary}
    Let $\Omega\subset\R^N$ be a bounded domain. Let $N\geq 2$ and $0<\alpha<N$. Then, 
        \begin{equation}\label{fkaliterriesz}
            \la_1(\Omega)\leq \la_1(\Omega^*).
        \end{equation}
        Equality holds in \eqref{fkaliterriesz}  only if $\Omega\cong\Omega^*$ up to a translation.
\end{corollary}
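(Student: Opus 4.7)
The plan is to mirror the proof of Theorem \ref{faber-unique-log}, replacing the logarithmic kernel with the Riesz kernel $K(r)=r^{-(N-\alpha)}$, which is strictly decreasing on $(0,\infty)$, so Proposition \ref{riezpol} applies. Let $u$ be a positive eigenfunction corresponding to $\la_1(\Om)$ (the existence of such a positive eigenfunction is guaranteed by \cite[Lemma 3.1]{suraganfirstsecond}) normalized so that $\|u\|_{L^2(\Om)}=1$, and extend $u$ by zero to $\R^N$. Using Proposition \ref{approxschwarz}, choose polarizers $(H_n)$ so that $u_n:=P_{H_n H_{n-1}\cdots H_1}(u) \to u^*$ in $L^2(\R^N)$, and denote $\Om_n:=P_{H_n H_{n-1}\cdots H_1}(\Om)$.

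Iterating the Riesz-type inequality \eqref{riez_inequality_pol} along this sequence of polarizations, I obtain
\begin{equation*}
\iint\limits_{\R^N\;\R^N}\frac{u(x)u(y)}{|x-y|^{N-\alpha}}\,dxdy \;\leq\; \iint\limits_{\R^N\;\R^N}\frac{u_n(x)u_n(y)}{|x-y|^{N-\alpha}}\,dxdy,\quad \forall\,n\in\N.
\end{equation*}
Since Proposition \ref{pol_decrs_diam} ensures $\Om_n\subset B_R$ for some fixed $R>0$, and $u_n\to u^*$ pointwise a.e.\ along a subsequence, I would like to pass to the limit by the dominated convergence theorem. The support of every integrand lies in $B_R\times B_R$, and by the standard regularity of Riesz potentials of $L^2$ functions the eigenfunctions are uniformly bounded; each $u_n$ is equimeasurable with $u$ so $\|u_n\|_{L^\infty}=\|u\|_{L^\infty}<\infty$. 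Hence a constant multiple of $|x-y|^{-(N-\alpha)}\chi_{B_R\times B_R}$, which is integrable since $N-\alpha<N$, provides an integrable majorant. Passing to the limit and noting that $\operatorname{supp}(u)\subseteq\overline{\Om}$ and $\operatorname{supp}(u^*)\subseteq\overline{\Om^*}$ together with $\|u^*\|_{L^2(\Om^*)}=1$, the variational characterization \eqref{pevr} immediately gives $\la_1(\Om)\leq \la_1(\Om^*)$.

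For the equality case, suppose $\la_1(\Om)=\la_1(\Om^*)$. For any polarizer $H$, Theorem \ref{fkriesz} combined with the already-established inequality applied to $P_H(\Om)$ yields
\begin{equation*}
\la_1(\Om)\leq \la_1(P_H(\Om))\leq \la_1((P_H(\Om))^*)=\la_1(\Om^*),
\end{equation*}
so that $\la_1(\Om)=\la_1(P_H(\Om))$ for every polarizer $H$. The equality case of Theorem \ref{fkriesz} then forces $P_H(\Om)\cong\Om$ or $P_H(\Om)\cong\sigma_H(\Om)$ for every $H$, and invoking \cite[Lemma 6.3]{brockpol} exactly as in the proof of Theorem \ref{faber-unique-log} concludes $\Om\cong\Om^*$ up to a translation.

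The main technical obstacle is the dominated convergence step, since the Riesz kernel is genuinely singular on the diagonal (unlike the logarithmic kernel, which is merely unbounded). The saving feature is that the singularity is locally integrable in $\R^N\times\R^N$ precisely because $0<\alpha<N$, and combining this with the uniform $L^\infty$ bound on $(u_n)$ (inherited from rearrangement invariance and the boundedness of the eigenfunction of a compact operator with locally integrable kernel on a bounded domain) provides the required integrable majorant. Apart from this, the argument is structurally identical to the logarithmic case.
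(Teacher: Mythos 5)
Your proof is correct and mirrors the paper's intended argument: the paper itself does not spell out a separate proof but states that the corollary follows by the same polarization–approximation scheme used for Theorem~\ref{faber-unique-log}, and your writeup supplies exactly those details (positive eigenfunction from \cite{suraganfirstsecond}, iterated Riesz-type inequality via Proposition~\ref{riezpol}, dominated convergence using the uniform $L^\infty$ bound preserved by polarization and the local integrability of $|x-y|^{-(N-\alpha)}$, then the equality case via Theorem~\ref{fkriesz} and \cite[Lemma~6.3]{brockpol}). One small remark: the logarithmic kernel is also singular on the diagonal (it diverges as $|x-y|\to 0$), so the distinction you draw between ``singular'' and ``merely unbounded'' is not quite accurate; the relevant shared property is that both kernels are locally integrable, which is what makes the dominated convergence argument go through in both cases.
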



\begin{remark}
Like the Logarithmic potential operator, the Riesz potential operator exhibits monotonicity of the largest eigenvalue under various domain perturbations.
    \begin{enumerate}[(i)]
        \item \textbf{The monotonicity of $\la_1$ for eccentric annular region:} The case of an eccentric annular region for $\Lom$ is discussed in Theorem \ref{corlog}. A similar result also holds for $\mathcal{R}$. Let $0<r<R$ and $e_1=(1,0,0,\cdots)\in\mathbb{S}^{N-1}$. Recall $\Omega_t=B_R(0)\setminus \overline{B_r(te_1)}$. Then, $\la_1(\Omega_t)$ is strictly increasing for $t\in [0,R-r)$. In this case, the restriction on the outer radius is not required since a positive eigenfunction exists.

        \item \textbf{The monotonicity of $\la_1$ for punctured domains, as the obstacle is translated:} Let $\Omega_s=\Omega\setminus\mathcal{O}_s$ and $L_\mathcal{O}$ be as mentioned in Remark \ref{translation_statement}. For  $s<t$ in $L_{\mathcal{O}}$, we have $P_{H_a}(\Omega_s)=\Omega_t$, where $a=\frac{s+t}{2}$. Then, by Theorem \ref{fkriesz}, $\la_1(\Om_s)$ increases monotonically as follows:
             \begin{equation*}
                \la_1(\Omega_s)<\la_1(\Omega_t).
             \end{equation*}

        \item \textbf{The monotonicity of the eigenvalues for punctured domains, as the obstacle is rotated:}. Let $\Om_\theta=\Om\setminus\mathcal{O_\theta}$ and $I_{\mathcal{O}}$ be as defined in Remark \ref{rotation_statement}. For $\theta_1<\theta_2$ in $I_{\mathcal{O}}$, recall $P_{H_\theta}$ from Remark \ref{rotation_statement} such that $P_{H_\theta}(\Om_{\theta_1})=\Om_{\theta_2}$. Then, by Theorem \ref{fkriesz},
        \begin{equation}
            \la_1(\Omega_{\theta_1})< \la_1(\Omega_{\theta_2}).
        \end{equation}  
    \end{enumerate}
\end{remark}



\subsection{Some open problems}
As previously mentioned, the transfinite diameter of a domain determines whether the operator is positive or not. In \cite[Theorem 3]{troutman1967}, author proved that $\Lom$ is a positive operator on $L^2(\Om)$ if and only if $T_{diam}(\Om)\leq 1$. Moreover, if $T_{diam}(\Om)>1$, then there exists a unique negative eigenvalue $\Tilde{\tau}_1(\Om)$ for $\Lom$\cite[Theorem 2]{troutman1967}. Next, we provide an example of a compact set $E$ for which $T_{diam}(E)>1$, but $T_{diam}(P_H(E))\leq 1$.
\begin{example}\label{transfinite_diam_example} 
Consider an ellipse $E$ with semi-axes $2$ and $0.25$. The transfinite diameter of $E$ is given by $T_{diam}(E)=\frac{2+0.25}{2}=1.125$. Let $E^*$ be the disc with radius $\frac{1}{\sqrt{2}}$, which has the same area as E. For $E^*$, $T_{diam}(E^*)=\frac{1}{\sqrt{2}}\approx 0.71$. There exists a sequence of polarizers $H_n$ and polarized domains $E_n=P_{H_n H_{n-1}\cdots H_1}(E)$  that converges to $E^*$ with respect to the Hausdorff distance \cite[Lemma 7.2]{brockpol}. i.e; 
    \begin{equation}
        \lim_{n\rightarrow \infty}d(E_n,E^*)=0,
    \end{equation}
    where the Hausdorff distance $d$ between two compact sets $K$ and $M$ is defined as
    \begin{equation*}
        d(K,M):=\inf\{r>0:K\subset M_r,\, M\subset K_r\}, \quad M_r=M+B_r\text{ and } K_r=K+B_r.
    \end{equation*}
    Thus, for large $n$, $E_n \subset B_1$. Consequently, there exists an $N\in\N$ such that $T_{diam}(E_N)>1$ and $$T_{diam}(P_{H_{N+1}}(E_N))=T_{diam}(E_{N+1})\leq 1.$$
\end{example}
It is noted in \cite[Note A, p.188 $\&$ p.192]{polyaszego} and proved in \cite[p.606]{wolontis}, that Schwarz and Steiner symmetrizations decreases the transfinite diameter. We conjecture that polarization also decreases the transfinite diameter. 


\begin{conjecture}
    Let $E$ be a compact subset of $\R^2$ and $H$ be a polarizer. Then, 
    \begin{equation}\label{conjecture_trans}
        T_{diam}(P_H(E))\leq T_{diam}(E).
    \end{equation}
    The equality holds in \eqref{conjecture_trans} only if $P_H(E)\cong E$ or $P_H(E)\cong \sigma_H(E)$. 
\end{conjecture}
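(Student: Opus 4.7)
The plan is to establish the inequality through the $n$-diameter characterization,
\[
\rho_n(E)^{n(n-1)/2} = \max_{(x_1, \ldots, x_n) \in E^n} \prod_{i<j} |x_i - x_j|,
\]
together with $T_{diam}(E) = \lim_{n \to \infty} \rho_n(E)$. It therefore suffices to prove $\rho_n(P_H(E)) \leq \rho_n(E)$ for every $n \geq 2$ and pass to the limit. Given an $n$-tuple $(y_1, \ldots, y_n) \in P_H(E)^n$ attaining $\rho_n(P_H(E))$, I would construct a companion $n$-tuple $(x_1, \ldots, x_n) \in E^n$ with $\prod_{i<j}|x_i - x_j| \geq \prod_{i<j}|y_i - y_j|$. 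Since $P_H(E) \subseteq E \cup \sigma_H(E)$, for every $i$ the set $\{y_i, \sigma_H(y_i)\} \cap E$ is non-empty, so a candidate $x_i$ is always available.

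Encode the choice by $\epsilon_i \in \{+,-\}$ with $x_i = y_i$ when $\epsilon_i = +$ and $x_i = \sigma_H(y_i)$ when $\epsilon_i = -$. Then $|x_i - x_j| = |y_i - y_j|$ when $\epsilon_i = \epsilon_j$, while $|x_i - x_j| = |y_i - \sigma_H(y_j)|$ when $\epsilon_i \neq \epsilon_j$; by Proposition \ref{reflection_distance}, the latter strictly exceeds $|y_i - y_j|$ when $y_i, y_j$ lie on the same side of $\partial H$, and is strictly smaller when they lie on opposite sides. The sign $\epsilon_i$ is forced to $+$ when $\sigma_H(y_i) \notin E$, forced to $-$ when $y_i \notin E$, and otherwise free. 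In the easy case where all $y_i \in E$ (respectively all $\sigma_H(y_i) \in E$), the uniform choice $\epsilon \equiv +$ (respectively $\epsilon \equiv -$) already gives $\prod|x_i - x_j| = \prod|y_i - y_j|$. The equality case should then be extracted from the strictness in Proposition \ref{reflection_distance}: if $T_{diam}(P_H(E)) = T_{diam}(E)$, then every cross-term ratio $|y_i - \sigma_H(y_j)|/|y_i - y_j|$ appearing in the construction must equal $1$ asymptotically, forcing the optimizing configurations to concentrate on a single side of $\partial H$ or to be $\sigma_H$-symmetric, yielding $P_H(E) \cong E$ or $P_H(E) \cong \sigma_H(E)$.

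The main obstacle is the mixed-sign case: when some $y_i \in H \setminus E$ (forcing $\epsilon_i = -$) coexists with some $y_j \in H^c$ with $y_j \in E \cap \sigma_H(E)$ (so $\epsilon_j$ is free but $y_i, y_j$ are on opposite sides of $\partial H$), every compatible sign assignment produces at least one ``bad'' cross-term with $|x_i - x_j| < |y_i - y_j|$. Proving that such bad terms are always outweighed by the ``good'' ones from same-side pairs appears to demand either a delicate combinatorial argument — analyzing the product over all admissible subsets $F \subseteq \{1, \ldots, n\}$ of flipped indices and optimizing via local swaps — or a reformulation via the Robin constant, attempting to ``unpolarize'' the equilibrium measure on $P_H(E)$ to a probability measure on $E$ of no larger log energy. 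The measure-theoretic route faces the same obstruction in the form of non-sign-definite cross-terms between the mass reflected from $P_H(E) \setminus E$ and the mass located in $E \cap \sigma_H(E) \cap H^c$, and would plausibly proceed by a two-step rearrangement using Proposition \ref{riezpol} — first reflecting the $P_H(E) \setminus E$ mass, then symmetrizing the residual $H^c$ mass — in analogy with the Schwarz symmetrization case (cf.\ Proposition \ref{approxschwarz} and the classical results cited from \cite{polyaszego, wolontis}). Resolving this mixed-sign obstruction is, in my view, the crux of any eventual proof.
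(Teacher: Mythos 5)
The statement you are trying to prove is labeled as a \emph{conjecture} in the paper: the authors explicitly present it as open, citing only that Schwarz and Steiner symmetrizations are known to decrease the transfinite diameter (P\'olya--Szeg\H{o}, Wolontis) and noting that they \emph{anticipate} the same for polarization. There is therefore no proof in the paper against which to compare your attempt, and indeed your proposal does not furnish one either: you are candid that the ``mixed-sign obstruction'' is unresolved, and that is precisely the whole difficulty.

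That said, your analysis of where the difficulty lies is essentially sound, with one small overstatement. In the specific three-party configuration you describe ($y_i \in H\setminus E$ forcing $\epsilon_i = -$, and $y_j \in H^c$ with $\epsilon_j$ free), the choice $\epsilon_j = -$ does \emph{not} produce a bad term with $y_i$: it gives $|x_i - x_j| = |\sigma_H(y_i) - \sigma_H(y_j)| = |y_i - y_j|$, which is neutral. The real problem, as you seem to sense, is global rather than pairwise: flipping $\epsilon_j$ to neutralize the $(i,j)$ pair may create bad cross-terms between $\sigma_H(y_j)$ and other indices $k$ forced to $\epsilon_k = +$. No local greedy rule resolves this, so some genuine combinatorial input on the full product $\prod_{i<j}|x_i - x_j|$ over all admissible sign patterns is needed, and that is not supplied. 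Likewise, your sketched energy-theoretic route via the Robin constant $V_E$ faces a directionality issue: Proposition \ref{riezpol} gives $I(f)\le I(P_H f)$, which says polarization \emph{increases} the log-energy of a fixed density; but to show $V_{P_H(E)}\ge V_E$ one must ``unpolarize'' an arbitrary probability measure supported on $P_H(E)$ to one supported on $E$ of no larger energy, and not every such measure is the polarization of a measure on $E$. This is the measure-theoretic avatar of the same obstruction. In short: the strategy via $\rho_n$ and companion $n$-tuples is a reasonable line of attack and correctly reduces the problem, but the crux you identify is genuinely open, consistent with the paper leaving the statement as a conjecture.
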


\par It is natural to ask whether the reverse Faber-Krahn inequality holds for $\Tilde{\tau}_1$?
If $T_{diam}(\Om)\leq 1$ and $T_{diam}(P_H(\Om))\leq 1$, then $$\Tilde{\tau}_1(\Om)=\Tilde{\tau}_1(P_H(\Om))=0.$$ 
Similarly, if $T_{diam}(\Om)>1$ and $T_{diam}(P_H(\Om))\leq 1$(see Example \ref{transfinite_diam_example}), then $$\Tilde{\tau}_1(\Om)<0\text{ and }\Tilde{\tau}_1(P_H(\Om))=0,$$ so the inequality 
$$\Tilde{\tau}_1(\Omega)\leq \Tilde{\tau}_1(P_H(\Omega)),$$ holds trivially in these cases. Based on the above observations, we anticipate the following reverse Faber-Krahn  inequality for $\Tilde{\tau}_1(\Om)$:
\begin{conjecture}\label{conj_faber}
    Let $\Om$ be a bounded domain and $H$ be a polarizer. Then,
        \begin{equation}\label{pollogtilde}
            \Tilde{\tau}_1(\Omega)\leq \Tilde{\tau}_1(P_H(\Omega)).
        \end{equation}
\end{conjecture}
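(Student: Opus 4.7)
The natural plan is to adapt the strategy of Theorem~\ref{fklog}, running it in reverse: instead of polarizing an eigenfunction on $\Om$ to get a candidate on $P_H(\Om)$, I would take the positive eigenfunction on $P_H(\Om)$ and build a candidate on $\Om$ with smaller $E$. First I would reduce to the nontrivial case: if $T_{diam}(P_H(\Om))\le 1$, then by \cite[Theorem~3]{troutman1967} one has $\Tilde{\tau}_1(P_H(\Om))=0$, while $\Tilde{\tau}_1(\Om)\le 0$ always, so \eqref{pollogtilde} holds trivially. I may therefore assume $T_{diam}(P_H(\Om))>1$, in which case $\Tilde{\tau}_1(P_H(\Om))<0$ is simple by \cite[Theorem~2]{troutman1967} and admits a positive $L^2$-normalized eigenfunction $\psi$ on $P_H(\Om)$.

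Next, exploiting the fact that $|\Om|=|P_H(\Om)|$ and that $\sigma_H$ maps $P_H(\Om)\setminus\Om$ bijectively onto $\Om\setminus P_H(\Om)$, I would define the measure-preserving bijection $T\colon\Om\to P_H(\Om)$ by $T(x)=x$ on $\Om\cap P_H(\Om)$ and $T(x)=\sigma_H(x)$ on $\Om\setminus P_H(\Om)$, and take the test function $v=\psi\circ T\ge 0$ on $\Om$. Then $\|v\|_{L^2(\Om)}=\|\psi\|_{L^2(P_H(\Om))}=1$, so by \eqref{pevneg} it suffices to establish $E_\Om(v)\le E_{P_H(\Om)}(\psi)=\Tilde{\tau}_1(P_H(\Om))$. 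Changing variables via $T$ reduces this to
\[
\iint_{P_H(\Om)\times P_H(\Om)}\log\frac{|\xi-\eta|}{|T^{-1}(\xi)-T^{-1}(\eta)|}\,\psi(\xi)\psi(\eta)\,d\xi\,d\eta\le 0.
\]

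I would then analyze this integrand via the decomposition $P_H(\Om)=A_1\cup A_2\cup A_3$ with $A_1=\Om\cap\sigma_H(\Om)$, $A_2=(\Om\setminus\sigma_H(\Om))\cap H$, and $A_3=(\sigma_H(\Om)\setminus\Om)\cap H$. The integrand vanishes when both $\xi,\eta$ lie in $A_1\cup A_2$ or both in $A_3$, because $T^{-1}$ is the identity or an isometric reflection on each such block. On the mixed stratum $(A_1\cup A_2)\times A_3$, Proposition~\ref{reflection_distance} identifies $|T^{-1}(\xi)-T^{-1}(\eta)|=|\xi-\sigma_H(\eta)|$, with $|\xi-\eta|<|\xi-\sigma_H(\eta)|$ when $\xi,\eta$ lie on the same side of $\partial H$ and the reverse otherwise. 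Folding the contribution from $\xi\in A_1\cap H^c$ onto $A_1\cap H$ via $\xi\mapsto\sigma_H(\xi)$, the whole integral collapses to a manifestly nonpositive $A_2\times A_3$ piece plus
\[
\iint_{(A_1\cap H)\times A_3}\log\frac{|\xi-\eta|}{|\xi-\sigma_H(\eta)|}\,\bigl(\psi(\xi)-\psi(\sigma_H(\xi))\bigr)\psi(\eta)\,d\xi\,d\eta,
\]
whose $\log$-factor is strictly negative by Proposition~\ref{reflection_distance}.

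The main obstacle is then the sign of $\psi(\xi)-\psi(\sigma_H(\xi))$ on $A_1\cap H$: the argument closes cleanly provided $\psi$ is itself $H$-polarized, that is, $P_H(\psi)=\psi$. Unlike in Theorem~\ref{fklog}, this polarization-invariance of $\psi$ is not a direct consequence of Proposition~\ref{riezpol}, since both the Riesz inequality and the minimality of $\psi$ push $E(P_H(\psi))\ge E(\psi)$ in the same direction and so do not force equality (the strict equality clause of Proposition~\ref{riezpol} cannot be invoked here). Establishing $P_H(\psi)=\psi$---possibly by exploiting the simplicity of $\Tilde{\tau}_1(P_H(\Om))$ from \cite[Theorem~2]{troutman1967} together with a suitable perturbation of the functional $E$, or by finding a more clever test function that bypasses the polarization-invariance hypothesis altogether---is where I expect the real difficulty to lie.
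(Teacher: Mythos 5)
This statement is posed in the paper as a \emph{conjecture}: no proof is given, and the paper itself flags exactly the obstruction you run into, namely the need to manufacture a normalized competitor on $\Om$ from the negative eigenfunction on $P_H(\Om)$ satisfying a Riesz-type inequality in the reverse direction. So there is no paper proof to compare against; what can be assessed is whether your reduction is sound and whether the gap you name is the real one.

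Your bookkeeping is correct: $T$ (identity on $\Om\cap P_H(\Om)$, reflection on $\Om\setminus P_H(\Om)$) is indeed a measure-preserving bijection onto $P_H(\Om)$, the change of variables yields the desired inequality
\[
\iint_{P_H(\Om)\times P_H(\Om)}\log\frac{|\xi-\eta|}{|T^{-1}(\xi)-T^{-1}(\eta)|}\,\psi(\xi)\psi(\eta)\,d\xi\,d\eta\le 0,
\]
the blocks $(A_1\cup A_2)^2$ and $A_3^2$ contribute zero (since $T^{-1}$ is the identity or an isometry there), and the $A_2\times A_3$ cross term is nonpositive by Proposition \ref{reflection_distance} because both variables lie in $H$ and $\psi\ge 0$ by \cite[Theorem 1]{troutman1967}. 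The genuine and unresolved gap is precisely the one you identify: after folding $A_1\cap H^c$ onto $A_1\cap H$, the remaining term is weighted by $\psi(\xi)-\psi(\sigma_H(\xi))$, and there is nothing in the paper that forces this to be nonnegative. Unlike the $\tau_1$ case (Theorem \ref{fklog}), where the Riesz inequality and the variational principle squeeze in opposite directions, here Proposition \ref{riezpol} gives $E(\psi)\le E(P_H(\psi))$ while the minimality of $\tilde{\tau}_1(P_H(\Om))$ gives the same inequality, so the equality clause of Proposition \ref{riezpol} cannot be invoked and $P_H(\psi)=\psi$ is not available. Your proposal is thus a correct partial reduction that sharply locates the difficulty, not a proof; this matches the paper's own assessment of the conjecture as open.
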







The challenging part is to construct a function $u$ in $L^2(\Om)$ with $\|u\|_{L^2(\Om)}=1$ from a function $v$ in $L^2(P_H(\Om))$ with $\|v\|_{L^2(P_H(\Om))}=1$, such that the following Riesz-type inequality holds:
 \begin{equation}
        \iint\limits_{\Om\;\Om}\log \frac{1}{|x-y|} u(x)u(y) dxdy \leq \int\limits_{P_H(\Om)} \int\limits_{P_H(\Om)} \log \frac{1}{|x-y|}  v(x)v(y) dxdy.
    \end{equation}

Faber-Krahn-type inequalities can also be studied by fixing the area or the transfinite diameter. We conjecture the following reverse Faber-Krahn inequalities for the Logarithmic operator.
\begin{conjecture}
    Let $\Om$ be a bounded domain. Then,
    \begin{equation}\label{fkaliterlogtilde}
\tilde{\tau}_1(\Omega) \leq \tilde{\tau}_1(\Omega^*).
\end{equation}
Additionally, for a domain $\Om$ with transfinite diameter $R$, 
\begin{equation}\label{fkaliterlogtilde_transfinite}
\tilde{\tau}_1(\Omega) \leq \tilde{\tau}_1(B_R).
\end{equation}
\end{conjecture}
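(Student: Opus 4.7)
The plan is to reduce both inequalities to the polarization statement of Conjecture \ref{conj_faber}, namely $\tilde{\tau}_1(\Omega) \leq \tilde{\tau}_1(P_H(\Omega))$ for every polarizer $H$, and then lift this via approximation to Schwarz symmetrization and to discs of the same transfinite diameter.

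Assuming Conjecture \ref{conj_faber}, inequality \eqref{fkaliterlogtilde} would follow by iterating polarizations via Proposition \ref{approxschwarz} to obtain $\Omega_n = P_{H_n H_{n-1} \cdots H_1}(\Omega)$ with $\tilde{\tau}_1(\Omega) \leq \tilde{\tau}_1(\Omega_n)$ and $\Omega_n \to \Omega^*$, then passing to the limit. The limiting step requires upper semi-continuity of $\tilde{\tau}_1$ under this convergence, which I would establish by extracting a weakly $L^2$-convergent subsequence of the normalized positive eigenfunctions $\tilde{v}_n$ on $\Omega_n$ (using the $L^\infty$ bound from Proposition \ref{prop_regularity}) and checking that the weak limit is an admissible test function on $\Omega^*$ realizing $\liminf_n \tilde{\tau}_1(\Omega_n)$.

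The central obstacle is Conjecture \ref{conj_faber} itself, which is fundamentally harder than Theorem \ref{fklog} because the standard Riesz-type rearrangement inequality of Proposition \ref{riezpol} gives energy-\emph{increasing} under polarization, the right direction for a supremum but the wrong direction for an infimum. Starting from the positive eigenfunction $v$ on $P_H(\Omega)$, the naive depolarization $u = v \circ T^{-1}$, with $T$ the measure-preserving bijection that is the identity on $\Omega \cap P_H(\Omega)$ and $\sigma_H$ on $A_H$, gives after a change of variables
\begin{equation*}
E(u) - E(v) \;=\; \frac{1}{\pi} \iint\limits_{(\Omega \cap P_H(\Omega)) \times A_H} v(x)\,v(y)\, \log\frac{|x-y|}{|x-\sigma_H(y)|}\,dx\,dy ,
\end{equation*}
which by Proposition \ref{reflection_distance} has the favorable (negative) sign on $(\Omega \cap H) \times A_H$ but the unfavorable (positive) sign on $(\Omega \cap \sigma_H(\Omega) \cap H^c) \times A_H$. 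To cancel the unfavorable term my proposed strategy is to additionally swap the values of $v$ on each symmetric pair $\{x,\sigma_H(x)\} \subset \Omega \cap \sigma_H(\Omega)$ so that the larger of $v(x),v(\sigma_H(x))$ sits on the $H^c$ side, preserving the $L^2$ norm, and then recombine the cross terms using Proposition \ref{reflection_distance} once more; this is the key technical step where I expect most of the difficulty.

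For \eqref{fkaliterlogtilde_transfinite}, merely chaining \eqref{fkaliterlogtilde} with domain monotonicity (Theorem \ref{domain_monotonicity_tilde}) yields only $\tilde{\tau}_1(\Omega) \leq \tilde{\tau}_1(B_{R_0})$ with $R_0 = \sqrt{|\Omega|/\pi} \leq R$, and since $\tilde{\tau}_1(B_{R_0}) \geq \tilde{\tau}_1(B_R)$, this falls short of the desired bound. I would attack \eqref{fkaliterlogtilde_transfinite} directly by exploiting the equality of Robin constants $V_\Omega = V_{B_R} = -\log R$: starting from the equilibrium measure $\mu_\Omega$ on $\overline{\Omega}$, construct $L^2$-normalized approximants $u_\varepsilon \in L^2(\Omega)$ via mollification and truncation, and compare $E(u_\varepsilon)$ against the explicit value $\tilde{\tau}_1(B_R) = -R^2/\mu_0(B_R)^2$ from Theorem \ref{radial_small_version}. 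Since $\mu_\Omega$ is typically supported on $\partial\Omega$, the main difficulty here is choosing the mollification scale so that the $L^2$ norm remains controlled while the energy captures the correct logarithmic dependence; a careful match with the radial profile $I_0(\mu_0(B_R)|x|/R)$ inside $B_R$ may be required.
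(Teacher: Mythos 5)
The statement you are attacking is presented in the paper as a \emph{conjecture}, with no proof offered; the authors only observe the trivial case $|\Omega|\le\pi$, where $T_{diam}(\Omega^*)\le 1$ forces $\Tilde{\tau}_1(\Omega^*)=0$ and \eqref{fkaliterlogtilde} holds for free. There is therefore no paper proof to compare against, and your proposal is a research outline rather than a proof; you correctly flag that the core step, Conjecture \ref{conj_faber}, is itself open. Within that caveat, several of your concrete observations are right. The change-of-variables identity for $E(u)-E(v)$ under the piecewise-isometry $T$ is correct: the diagonal blocks $(\Omega\cap P_H(\Omega))^2$ and $A_H^2$ drop out, and Proposition \ref{reflection_distance} gives the favorable sign on $(\Omega\cap H)\times A_H$ and the unfavorable sign on $(\Omega\cap\sigma_H(\Omega)\cap H^c)\times A_H$. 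Your remark that chaining \eqref{fkaliterlogtilde} with Theorem \ref{domain_monotonicity_tilde} only reaches $B_{R_0}$ with $R_0=\sqrt{|\Omega|/\pi}\le R$ (since Schwarz symmetrization shrinks the transfinite diameter) is also right; in fact \eqref{fkaliterlogtilde_transfinite} is the strictly stronger of the two conjectured inequalities, since it implies \eqref{fkaliterlogtilde} by domain monotonicity.

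The remaining gaps are not technicalities to be filled in. For the ``swap'' fix to Conjecture \ref{conj_faber}: replacing $v$ by $w$ with $w(\sigma_H(x))=\max(v(x),v(\sigma_H(x)))$ on the symmetric core does give $E(w)\le E(v)$ on $P_H(\Omega)$, which provides a buffer; but if you actually collect the $A_H$-cross terms, the buffer contributed by $(\Omega\cap\sigma_H(\Omega)\cap H)\times A_H$ merely replaces the $\max(v(x),v(\sigma_H(x)))$ factor in the unfavorable term by $v(\sigma_H(x))$, i.e.\ you arrive back at exactly the unfavorable contribution you would have had with no swap at all, so on that block the maneuver is a wash and the sign problem persists. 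For \eqref{fkaliterlogtilde_transfinite}, the equilibrium-measure route has a structural obstruction rather than a tuning issue: $\mu_\Omega$ minimizes $\iint\log\tfrac{1}{|x-y|}\,d\nu\,d\nu$ over \emph{probability} measures, while $\Tilde{\tau}_1$ minimizes $E$ over the $L^2$ unit sphere, and any mollification $u_\varepsilon$ with $\|u_\varepsilon\|_{L^1}=1$ has $\|u_\varepsilon\|_{L^2}\to\infty$, so $E\bigl(u_\varepsilon/\|u_\varepsilon\|_{L^2}\bigr)=E(u_\varepsilon)/\|u_\varepsilon\|_{L^2}^2\to 0$, nowhere near the strictly negative $\Tilde{\tau}_1(B_R)=-R^2/\mu_0(B_R)^2$. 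The missing ingredient is precisely the bridge between the $L^1$ and $L^2$ variational problems, and neither mollification nor matching against $I_0$ supplies it. Your diagnosis of where the difficulty lies is accurate, but neither proposed fix closes it, and the statement remains, as the paper says, a conjecture.
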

Observe that, if $|\Om|\leq \pi$, then $T_{diam}(\Om^*)\leq 1$. Thus,  $\Tilde{\tau}_1(\Om^*)=0$ and hence \eqref{fkaliterlogtilde} trivially holds.





\appendix

\appendix
\section{Eigenvalues of Logarithmic Potential on a disc}\label{Appendix}

In \cite{anderson}, Anderson and others studied the eigenvalues of Logarithmic potential on a unit disc. They showed that the largest eigenvalue has a multiplicity of three. Motivated by this, we study eigenvalues on a disc with arbitrary radius $R$. For the sake of computational simplicity in this section, we treat $\Omega$ as a subset of $\mathbb{C}$. 
\begin{definition}
    We say an eigenvalue $\tau$ is a radial eigenvalue if there is a radial eigenfunction corresponding to it, while a non-radial eigenvalue is an eigenvalue with a non-radial eigenfunction corresponding to it. An eigenvalue $\tau$ can be both radial and non-radial.
\end{definition}
 First, we describe the radial eigenvalues of $\Lom$ on $B_R$. Before that, we recall the following functions. The Bessel function of order $n$,
\begin{equation}
    J_n(x)=\sum_{m=0}^\infty \frac{(-1)^m}{m!(n+m)!}\left(\frac{x}{2}\right)^{2m+n},
\end{equation} 
and the modified Bessel function of order zero,
\begin{equation}
    I_0(x)=\sum_{m=0}^\infty \frac{1}{(m!)^2}\left(\frac{x}{2}\right)^{2m}.
\end{equation}
Next, we state a more detailed version of Theorem \ref{radial_small_version}, and we will prove Theorem \ref{radial} in this section. 
\begin{theorem}\label{radial}
    Let $B_R$ be the disc with radius $R$ and centre at the origin. Then, the set of all positive radial eigenvalues $\tau_{0,n}$ of $\Lom$  and the corresponding radial eigenfunctions(up to a constant multiple) $u_{0,n}$ on $B_R$ are given by
    \begin{equation*}
        \tau_{0,n}(B_R)=\frac{R^2}{(\mu_{0,n}(B_R))^2},\quad u_{0,n}(x)=J_0\left(\frac{\mu_{0,n}(B_R)}{R}|x|\right),\quad n\in\N,
    \end{equation*}
    where $\mu_{0,n}(B_R)$ is the  n$^{th}$ zero of the  function $
        J_0(t)-\log R\, t J_0'(t).
   $
   In addition,  for $R>1$, there exists a unique negative eigenvalue $\Tilde{\tau}_1(B_R)$ and the corresponding eigenfunction(up to a constant multiple) are given by
    \begin{equation*}
        \Tilde{\tau}_1(B_R)=-\frac{R^2}{\mu_{0}(B_R)^2}, \quad \Tilde{u}(x)=I_0\left(\frac{\mu_{0}(B_R)}{R}|x|\right),
    \end{equation*}
    where $\mu_{0}(B_R)$ is the unique zero of the  function
    $I_0(t)-\log R\, t I_0'(t).  $
   \end{theorem}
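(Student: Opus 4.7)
\textbf{Proof plan for Theorem \ref{radial}.}

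The plan is to reduce the eigenvalue integral equation for a radial eigenfunction to a second order ODE on $(0,R)$ and extract a nonlocal-style boundary condition by evaluating the identity at $r=R$. First I would write a radial candidate eigenfunction as $u(x)=\Phi(|x|)$ and use the classical averaging identity
\begin{equation*}
\frac{1}{2\pi}\int_0^{2\pi}\log\frac{1}{|re^{i\alpha}-se^{i\theta}|}\,d\theta=-\log\max(r,s),
\end{equation*}
to rewrite $\mathcal{L}u$ as the one-dimensional integral
\begin{equation*}
\tau\Phi(r)=-\log r\int_0^{r}s\Phi(s)\,ds-\int_{r}^{R}s\Phi(s)\log s\,ds,\qquad 0<r\le R.
\end{equation*}

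Next I would differentiate this identity twice in $r$. The first differentiation kills the logarithmic singularity and yields $r\tau\Phi'(r)=-\int_0^r s\Phi(s)\,ds$; the second yields the ODE
\begin{equation*}
r^2\Phi''(r)+r\Phi'(r)+\tfrac{1}{\tau}\,r^2\Phi(r)=0.
\end{equation*}
Setting $\lambda=1/\tau$, this is Bessel's equation of order zero when $\tau>0$ and the modified Bessel equation (as in \eqref{modified_bessel}) when $\tau<0$. Regularity at $r=0$ (which follows from Proposition \ref{prop_regularity}) rules out the $Y_0$ and $K_0$ branches, so
\begin{equation*}
\Phi(r)=c\,J_0(\sqrt{\lambda}\,r)\quad(\tau>0),\qquad \Phi(r)=c\,I_0(\sqrt{-\lambda}\,r)\quad(\tau<0).
\end{equation*}

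To pin down $\lambda$, I would evaluate the original integral identity at $r=R$ and compare with $R\tau\Phi'(R)=-\int_0^{R}s\Phi(s)\,ds$, obtaining the nonlocal boundary relation
\begin{equation*}
\Phi(R)-R\log R\,\Phi'(R)=0.
\end{equation*}
Substituting the $J_0$ and $I_0$ Ans\"atze and setting $t=\sqrt{|\lambda|}R$ converts this into $J_0(t)-t\log R\,J_0'(t)=0$ and $I_0(t)-t\log R\,I_0'(t)=0$ respectively, which identifies the admissible $\lambda$ with the zeros $\mu_{0,n}(B_R)$ and $\mu_0(B_R)$. The formulas for $\tau_{0,n}(B_R)$ and $\widetilde{\tau}_1(B_R)$ then follow by back-substitution. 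Uniqueness of the negative eigenvalue reduces to the uniqueness of the positive zero of $I_0(t)-t\log R\,I_0'(t)$ for $R>1$, which is exactly Proposition \ref{bessel_prop_unique}; if $R\le 1$ then $\log R\le 0$ and the function stays strictly positive, which is consistent with the absence of a negative eigenvalue in that regime.

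The main obstacle I anticipate is the derivation of the boundary relation $\Phi(R)=R\log R\,\Phi'(R)$: unlike Dirichlet or Neumann boundary conditions, it is not imposed a priori but is \emph{forced} by the nonlocal integral equation, and care is needed when passing from the differentiated identity back to the undifferentiated one to make sure the boundary term $-\int_R^R s\Phi(s)\log s\,ds$ vanishes cleanly and that no spurious constant is lost in the two differentiations. Once this subtle step is handled, the remainder of the argument is a direct application of the uniqueness results for the modified Bessel zero in Proposition \ref{bessel_prop_unique}.
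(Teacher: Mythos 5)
Your proposal is correct, and it takes a genuinely different route from the paper. The paper derives the boundary condition $\Phi(R)-R\log R\,\Phi'(R)=0$ via a two-step machinery: Lemma \ref{lapla_ef} uses the Green's function of $-\Delta$ and Green's representation formula to show that eigenpairs of $\Lom$ are exactly solutions of $-\Delta u=u/\tau$ subject to a nonlocal boundary integral identity, and Lemma \ref{ball_lemma1} then computes that boundary identity on $B_R$ by Fourier decomposition in $\theta$ together with the residue theorem and a power-series expansion of $\log\frac{1}{1-z}$. You instead exploit the classical circular-averaging identity $\frac{1}{2\pi}\int_0^{2\pi}\log\frac{1}{|re^{i\alpha}-se^{i\theta}|}\,d\theta=-\log\max(r,s)$, which collapses $\Lom u$ for radial $u$ to a one-dimensional Volterra-type integral, and then obtain the Bessel ODE and the boundary condition by differentiating twice and evaluating at $r=R$. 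Both routes arrive at the same ODE $r^2\Phi''+r\Phi'+\lambda r^2\Phi=0$ and the same transcendental conditions $J_0(t)-\log R\,tJ_0'(t)=0$ and $I_0(t)-\log R\,tI_0'(t)=0$, and both close via Proposition \ref{bessel_prop_unique}.

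Each approach has advantages. Yours is considerably more elementary and self-contained for the radial case: it avoids Green's functions, representation formulas, and contour integration entirely, and the subtlety you flag — showing the nonlocal boundary relation is \emph{forced} — is handled cleanly, since $\tau\Phi(r)$ and $F(r):=-\log r\int_0^r s\Phi(s)\,ds-\int_r^R s\Phi(s)\log s\,ds$ have equal derivatives and agree at $r=R$ by the boundary condition, hence agree identically (so the converse direction is also immediate). The paper's machinery is heavier but pays off elsewhere: Lemma \ref{ball_lemma1} simultaneously produces the boundary conditions $m\Phi(R)+R\Phi'(R)=0$ for all $m\in\N$, which are needed to prove Theorem \ref{nonradial} on the non-radial eigenvalues. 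So your plan is a valid and cleaner proof of Theorem \ref{radial} specifically, but it would need a separate computation (e.g.\ the averaging identity for the kernel against $e^{im\theta}$) to recover the non-radial spectrum.

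One small point worth making explicit in a final write-up: you assume $\tau\ne 0$ when dividing out $\tau$ to extract $\Phi(R)=R\log R\,\Phi'(R)$. This is justified since $\Lom$ is a compact operator whose eigenfunctions are continuous by Proposition \ref{prop_regularity}, and a short argument (as in the paper's Lemma \ref{lapla_ef}) shows $\tau=0$ cannot carry a nonzero eigenfunction; alternatively, just observe that if $\tau=0$ the original integral identity would force $\int_0^r s\Phi(s)\,ds=0$ for all $r$ after differentiation, hence $\Phi\equiv 0$.
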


    The authors in \cite{suraganvolume} investigate the eigenvalues of the Logarithmic potential on discs, where all eigenvalues are positive. This paper expands its study to include a unique negative and positive eigenvalue on discs $B_R$, even when $\Lom$ is not a positive operator on $L^2(B_R)$. We derive a boundary integral using the approach outlined by \cite{anderson}, used for the unit disc. Next, we will prove some lemmas and propositions required to establish Theorem \ref{radial}.

   \begin{lemma}\label{lapla_ef}
     Let $\Omega$ be a bounded domain with $C^2$ boundary. Then $(\tau,u)$ is an eigenpair for $\Lom$ if and only if $u$ satisfies the equation(in the classical sense):
     \begin{equation}\label{lap_u=la_u}
        -\Delta u=\frac{1}{\tau} u \quad \text{ in } \Omega,
    \end{equation} 
    and the boundary condition:
     \begin{equation}\label{ef_bdry}
         \int_{\partial \Omega} u(y) \frac{\partial}{\partial\eta_y} \log \frac{1}{|x-y|} d\sigma_y- \int_{\partial \Omega} \frac{\partial u}{\partial\eta_y}\log \frac{1}{|x-y|} d\sigma_y=0, \forall\, x\in \Omega.
    \end{equation}
   \end{lemma}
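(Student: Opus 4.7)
The plan is to exploit the fact that $-\frac{1}{2\pi}\Delta_x \log\frac{1}{|x-y|} = \delta_y(x)$, so the logarithmic kernel is the fundamental solution of $-\Delta$ in $\R^2$, and to combine this with Green's representation formula on $\Omega$.

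For the forward implication, suppose $(\tau,u)$ is an eigenpair. By Proposition \ref{prop_regularity}, $u\in C(\overline{\Omega})$ and $\tau u(x) = \Lom u(x)$ pointwise in $\Omega$. Standard potential-theoretic regularity upgrades this: since $u$ is bounded, the logarithmic potential $\Lom u$ lies in $C^{1,\alpha}(\Omega)$, and an elliptic bootstrap (using the $C^2$ boundary and Schauder estimates) pushes $u$ into $C^2(\Omega)\cap C^1(\overline{\Omega})$. Then applying $-\Delta_x$ to $\tau u(x) = \frac{1}{2\pi}\int_\Omega \log\frac{1}{|x-y|}u(y)\,dy$ and using the fundamental solution property yields $-\tau\Delta u(x) = u(x)$ in $\Omega$, which is \eqref{lap_u=la_u}. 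For the boundary identity \eqref{ef_bdry}, I would invoke Green's representation formula: for $x\in\Omega$,
\begin{equation}
u(x) = \frac{1}{2\pi}\int_\Omega \log\frac{1}{|x-y|}(-\Delta u(y))\,dy + \frac{1}{2\pi}\int_{\partial\Omega}\left[\log\frac{1}{|x-y|}\frac{\partial u}{\partial \eta_y}(y) - u(y)\frac{\partial}{\partial\eta_y}\log\frac{1}{|x-y|}\right] d\sigma_y.
\end{equation}
Substituting $-\Delta u = u/\tau$ turns the volume integral into $\frac{1}{\tau}\Lom u(x) = u(x)$, and cancelling forces the boundary integral to vanish, which is precisely \eqref{ef_bdry}.

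The converse runs the same identity in reverse. Given $u\in C^2(\Omega)\cap C^1(\overline{\Omega})$ satisfying $-\Delta u = u/\tau$, the same Green representation formula yields
\begin{equation}
u(x) = \frac{1}{\tau}\Lom u(x) + \frac{1}{2\pi}\int_{\partial\Omega}\left[\log\frac{1}{|x-y|}\frac{\partial u}{\partial \eta_y}(y) - u(y)\frac{\partial}{\partial\eta_y}\log\frac{1}{|x-y|}\right] d\sigma_y,
\end{equation}
and the hypothesis \eqref{ef_bdry} kills the boundary integral, leaving $\tau u(x) = \Lom u(x)$ in $\Omega$. By continuity, this extends to $\overline{\Omega}$, so $(\tau,u)$ is an eigenpair.

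The main obstacle I anticipate is justifying the regularity required to differentiate under the integral sign and to apply Green's identity classically, since the kernel is singular on the diagonal. This is handled by a careful bootstrap: first extracting $C^{1,\alpha}$ regularity from boundedness of $u$ via the mapping properties of the logarithmic potential, then using the $C^2$-boundary hypothesis together with interior elliptic regularity for the identity $-\Delta u = u/\tau$ to obtain $u\in C^2(\Omega)\cap C^1(\overline{\Omega})$. Once this is in hand, everything reduces to a standard application of the fundamental-solution property of $\frac{1}{2\pi}\log\frac{1}{|x-y|}$ together with Green's second identity, with the small point that the singularity at $y=x$ is excised by the usual limiting argument with a shrinking ball $B_\varepsilon(x)$.
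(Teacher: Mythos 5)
Your proof is correct and follows essentially the same path as the paper: both establish regularity so that $u\in C^2$ and $-\Delta u = u/\tau$, then invoke Green's representation formula to identify the eigenvalue relation with the vanishing of the boundary integral. The only cosmetic difference is that you apply Green's representation directly with the logarithmic fundamental solution, whereas the paper detours through the domain's Green's function $g(x,y)=\frac{1}{2\pi}\log\frac{1}{|x-y|}-h^x(y)$ and then observes that the harmonic correction terms $h^x$ cancel, so the two derivations are algebraically identical.
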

\begin{proof}   
Since the boundary of $\Omega$ is $C^2$,  the  Green’s function $g$ of Laplacian on $\Omega$ exists and it is given by 
$$g(x,y)=\frac{1}{2\pi} \log\frac{1}{|x-y|}-h^x(y),$$  
where $h^x$ is such that 
     \begin{align*}
         \Delta h^x &=0\text{ in }\Omega; \qquad 
         h^x(y)= \frac{1}{2\pi} \log\frac{1}{|x-y|}   \text{ on }\partial \Omega  .    
     \end{align*}
    Let $(Gu)(x)=\int_\Omega g(x,y)u(y)dy.$
    \noindent  Notice that,
    \begin{equation}\label{Gf_Lf}
        (Gu)(x)=\Lom u(x)- \int_{\Omega} h^x(y)u(y)dy.
    \end{equation}
    Then by using Green’s representation formula\cite[p.19]{gilbarg_trudinger}, we have:
   \begin{align}
                 G(\De u)(x)&= \int_\Omega g(x,y)\De u(y)dy=
          - u(x)-  \int_{\pa\Om} u(y) \frac{\pa g(x,y) }{\pa \eta_y } d\si_y \label{Gu=nu_u+} \nonumber\\
          &= -u(x)- \frac{1}{2\pi} \int_{\pa\Om} u(y) \frac{\partial}{\partial\eta_y} \log \frac{1}{|x-y|}d\sigma_y+ \int_{\partial\Omega} u(y)\frac{\partial h^x(y)}{\partial\eta_y}d\sigma_y. 
            \end{align}
            Furthermore, using Green’s second identity, see, for example, \cite[p.18]{gilbarg_trudinger},
             \begin{equation}\label{h}
            \int_\Omega h^x(y)\Delta u(y)dy= \frac{1}{2\pi} \int_{\partial\Omega} \log \frac{1}{|x-y|}\frac{\partial u(y)}{\partial\eta}d\sigma_y -\int_{\partial\Omega} u(y)\frac{\partial h^x(y)}{\partial\eta} d\sigma_y.
         \end{equation}
         Therefore, from \eqref{Gf_Lf}
         \begin{equation}\label{LDelta_u}
             \Lom(\Delta u)(x)= -u(x)- \frac{1}{2\pi} \int_{\pa\Om} u(y) \frac{\partial}{\partial\eta_y} \log \frac{1}{|x-y|}d\sigma_y+\frac{1}{2\pi} \int_{\partial\Omega} \log \frac{1}{|x-y|}\frac{\partial u(y)}{\partial\eta}d\sigma_y.
         \end{equation}
    Let $\tau$ be an eigenvalue of $\Lom$ and $u$  be an eigenfunction corresponding to $\tau$. It can be easily shown that, for any $f\in L^2(\Omega)$, $\Lom f\in C^\alpha (\Omega)$ with $0<\alpha<1$, see for example \cite[p.2]{troutman1967}. Thus $u\in C^\alpha(\Omega)$ and $u$ is bounded. Now, by \cite[Lemma 4.2, Theorem 4.6]{gilbarg_trudinger} we get $ \Lom u\in  C^{2,\alpha}(\Omega)$ and $-\Delta \Lom u=u$ in $\Omega$. Thus $u\in C^2(\overline{\Omega})$ and $-\Delta \tau u= u  \text{ in } \Omega$ pointwise. Therefore, $\tau\neq 0$ and 
    \begin{equation}\label{laplace_ev}
       - \Delta  u= \frac{1}{\tau} u.
    \end{equation}
      Moreover,
       \begin{align}
                Gu(x)&= G(-\tau\De u)(x)=-\tau G(\De u)(x),
            \end{align}
     and
    \begin{align}\label{h^x_y}
      \Lom u(x)&-  \int_{\Omega} h^x(y)u(y)dy
      =\tau  u(x)+ \tau  \int_\Omega h^x(y)\Delta u(y)dy.\nonumber
    \end{align}
 From $\eqref{Gf_Lf}$, $\eqref{Gu=nu_u+}$ and \eqref{h}  we can easily conclude that,
    \begin{equation*}
         \int_{\partial \Omega} u(y) \frac{\partial}{\partial\eta_y} \log \frac{1}{|x-y|} d\sigma_y- \int_{\partial \Omega} \log \frac{1}{|x-y|}\frac{\partial u(y)}{\partial\eta_y} d\sigma_y=0, \forall\, x\in \Omega.
    \end{equation*}
    Conversely, assume that $u$ satisfies $\eqref{lap_u=la_u}$ and $\eqref{ef_bdry}$. Then, 
    \begin{align}
        \Lom u(x)&=-\tau \Lom(\Delta u)
        =\tau u(x),\,\,\forall\, x\in \Om ,
    \end{align}
    follows from \eqref{lap_u=la_u},$\eqref{LDelta_u}$ and \eqref{ef_bdry}.
\end{proof}

   \begin{lemma}\label{ball_lemma1}
   On $B_R$, $\tau$ is an eigenvalue of $\Lom$ if and only if there exists an $m\in \N_0$,  and $\Phi$  on $(0,R)$ satisfying 
     \begin{equation}\label{slproblemradial}
        r^2\Phi''+r\Phi'+\left[\frac{1}{\tau} r^2-m^2\right]\Phi=0,
    \end{equation}
    along with the boundary condition: \\
    \begin{equation}\label{m=0}
        \Phi(R)-R\log R \Phi'(R)=0,\text{ if } m=0,
    \end{equation}
    and 
    \begin{equation}\label{m-1onwards}
        m\Phi(R)+R\Phi'(R)=0, \text{ if } m\in\N.
    \end{equation}
   \end{lemma}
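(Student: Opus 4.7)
The plan is to invoke Lemma~\ref{lapla_ef} and then separate variables in polar coordinates on $B_R$. By that lemma, $(\tau,u)$ is an eigenpair of $\Lom$ on $B_R$ if and only if $-\Delta u = u/\tau$ in $B_R$ together with the boundary identity \eqref{ef_bdry}. Writing $u(r,\theta) = \Phi(r)\Psi(\theta)$, the Helmholtz equation forces $\Psi'' + m^2\Psi = 0$ with $\Psi$ being $2\pi$-periodic, so $m \in \N_0$ and $\Psi \in \mathrm{span}\{\cos(m\theta),\sin(m\theta)\}$; the radial factor then satisfies \eqref{slproblemradial}. By linearity of both the PDE and \eqref{ef_bdry}, it suffices to verify \eqref{ef_bdry} on a single angular mode, say $u(r,\theta) = \Phi(r)\cos(m\theta)$.

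The main step is to evaluate the two boundary integrals using the classical generating-series expansion
\[
\log\frac{1}{|x-y|} = -\log\rho + \sum_{k=1}^\infty \frac{1}{k}\left(\frac{|x|}{\rho}\right)^k \cos\bigl(k(\theta_x - \theta_y)\bigr), \qquad |x| < \rho = |y|.
\]
Parametrizing $y = (\rho\cos\theta_y, \rho\sin\theta_y)$ and taking the outward normal derivative $\partial/\partial\eta_y = \partial/\partial \rho$ at $\rho = R$ yields
\[
\frac{\partial}{\partial\eta_y}\log\frac{1}{|x-y|}\bigg|_{y\in\partial B_R} = -\frac{1}{R} - \frac{1}{R}\sum_{k=1}^\infty\left(\frac{|x|}{R}\right)^k\cos\bigl(k(\theta_x - \theta_y)\bigr).
\]
Parametrizing $\partial B_R$ by $\theta_y$ with $d\sigma_y = R\,d\theta_y$ and applying the orthogonality relations $\int_0^{2\pi}\cos(m\theta_y)\,d\theta_y = 2\pi\delta_{m,0}$ and, for $m\geq 1$, $\int_0^{2\pi}\cos(m\theta_y)\cos\bigl(k(\theta_x-\theta_y)\bigr)d\theta_y = \pi\cos(m\theta_x)\delta_{k,m}$, every off-diagonal term vanishes and the double series collapses to a single surviving summand.

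For $m=0$ the first integral of \eqref{ef_bdry} becomes $-2\pi\Phi(R)$ and the second becomes $-2\pi R\log R\,\Phi'(R)$, whose difference equals zero is exactly \eqref{m=0}. For $m\geq 1$ the first integral equals $-\pi\Phi(R)(|x|/R)^m\cos(m\theta_x)$ and the second equals $(\pi R/m)\Phi'(R)(|x|/R)^m\cos(m\theta_x)$; since \eqref{ef_bdry} must hold for every $x\in B_R$, we may cancel the nonzero factor $\pi(|x|/R)^m\cos(m\theta_x)$ (for generic $x$) and recover \eqref{m-1onwards}. The $\sin(m\theta)$ modes yield the identical scalar condition. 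The converse is then immediate: given $m$ and $\Phi$ satisfying \eqref{slproblemradial} with the appropriate boundary condition, the function $u(r,\theta) = \Phi(r)\cos(m\theta)$ satisfies both $-\Delta u = u/\tau$ and \eqref{ef_bdry}, so Lemma~\ref{lapla_ef} makes $(\tau,u)$ an eigenpair.

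The obstacle I expect is essentially bookkeeping: justifying the term-by-term angular integration of the Fourier series against $\cos(m\theta_y)$, which is legitimate thanks to uniform convergence on compact subsets $\{|x|\le r_0 < R\}$, and treating $m=0$ separately since the ``constant'' terms $-\log R$ and $-1/R$ (annihilated by angular integration against $\cos(m\theta_y)$ for $m\ge 1$) are precisely what produce the $\log R$ weight distinguishing \eqref{m=0} from \eqref{m-1onwards}.
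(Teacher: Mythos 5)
Your proof is correct and follows essentially the same route as the paper: reduce to the Helmholtz equation plus the boundary integral identity via Lemma~\ref{lapla_ef}, separate variables, evaluate the two boundary integrals mode by mode, and extract the scalar boundary condition. The only difference is cosmetic — you evaluate the angular integrals via the real Fourier (generating-series) expansion of $\log\frac{1}{|x-y|}$ and orthogonality of $\cos(m\theta)$, whereas the paper works with $\Phi(r)e^{im\theta}$ and applies the Cauchy integral formula and residues; both methods collapse the same double series to the same surviving term and recover \eqref{m=0} and \eqref{m-1onwards}.
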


 \begin{proof} Let $\tau$ be an eigenvalue of $\Lom$ on $B_R$. By Lemma \ref{lapla_ef}, $\frac{1}{\tau}$ must be an eigenvalue of $-\Delta$ on $B_R$ and  for any eigenfunction $u$ must satisfy the following  condition:
    \begin{equation*}
        \int_{\partial B_R} u(y) \frac{\partial}{\partial\eta_y} \log \frac{1}{|x-y|} d\sigma_y-\int_{\partial B_R} \frac{\partial u}{\partial\eta_y} \log \frac{1}{|x-y|}d\sigma_y=0.
    \end{equation*}
    Since $\frac{1}{\tau}$ is an eigenvalue of $-\De$ on $B_R$, there must exist  $m\in\N_0$ and $\Phi$ on $(0,R)$ such that $\Phi$ satisfies \eqref{slproblemradial}. In particular, any eigenfunction of $-\De$ corresponding to $\frac{1}{\tau}$ is a linear combination of  $\Phi(r)\cos(m\theta)$ and $\Phi(r)\sin (m\theta)$. For an eigenfunction $u$ and $x\in B_R$, let  $$A=\int_{\partial B_R} u(y) \frac{\partial}{\partial\eta_y} \log \frac{1}{|x-y|} d\sigma_y,\text{ and }B=\int_{\partial B_R} \frac{\partial u}{\partial\eta_y} \log \frac{1}{|x-y|}d\sigma_y.$$  
    For the ease of computation, first we calculate $A-B$ for $\mathcal{F}$ where  
    \begin{equation*}
        \mathcal{F}(x)=\Phi(r)e^{im\theta},\,m\in\N_0.
    \end{equation*} 
    Now for any $x=re^{i\theta}$ and $y=Re^{i\omega}$, we have
    
\begin{align}
    \frac{\partial}{\partial\eta_y} \log \frac{1}{|x-y|}&=\frac{x-y}{|x-y|^2}\cdot \frac{y}{R}   = \frac{r\cos (\omega-\theta)-R}{|re^{i\theta}-R e^{i\omega}|^2}=\frac{1}{2}\left[ \frac{e^{i\omega}}{re^{i\theta}-Re^{i\omega}} + \frac{ e^{-i\omega}}{re^{-i\theta}-R e^{-i\omega}}\right]\\
\end{align}
Next, we compute $A$:
\begin{align}
    A&= \int_{\partial B_R} \mathcal{F}(y) \frac{\partial}{\partial\eta_y} \log \frac{1}{|x-y|} d\sigma_y
    =\frac{\Phi(R)}{2 } \int_0^{2\pi}   \left[ \frac{e^{i(m+1)\omega}}{re^{i\theta}-Re^{i\omega}} + \frac{ e^{i(m-1)\omega}}{re^{-i\theta}-R e^{-i\omega}}\right] d\omega
\end{align}
By  applying the Residue theorem and Cauchy Integral formula, we obtain
\begin{align}
    \int_0^{2\pi}  \frac{e^{i(m+1)\omega}}{re^{i\theta}-Re^{i\omega}} d\omega&=-\frac{1}{Ri}\int_{|\zeta|=1}\frac{\zeta^m}{\zeta-\frac{re^{i\theta}}{R}}d\zeta
    =-\frac{2\pi}{R}   \left(\frac{re^{i\theta}}{R}\right)^m    .
\end{align}
 Similarly, 
\begin{align}
    \int_0^{2\pi} \frac{ e^{i(m-1)\omega}}{re^{-i\theta}-R e^{-i\omega}} d\omega&= \begin{cases}
       -\frac{2\pi}{R}   & \text{ if } m=0,\\
     0       & \text{ if } m\in\N
    \end{cases}
\end{align}
Combining the above integrals, we get
\begin{align}\label{I2}
    A=-\frac{\pi\Phi(R)}{R} \begin{cases}
        2 & \text{ if } m= 0,\\
       \left(\frac{re^{i\theta}}{R}\right)^m  & \text{ if } m\in\N.
    \end{cases}
\end{align}
Observe that, 
\begin{align*}
    \log \frac{1}{|x-y|}&= \log \frac{1}{|re^{i\theta}-Re^{i\omega}|}=\frac{1}{2}\left[ \log \frac{1}{re^{i\theta}-Re^{i\omega}}+ \log \frac{1}{re^{-i\theta}-Re^{-i\omega}} \right]\\
    &=\frac{1}{2}\left[2\log \frac{1}{R}+\log \frac{1}{1-\frac{re^{-i\theta}}{R}e^{i\omega}}+\log \frac{1}{1-\frac{re^{i\theta}}{R}e^{-i\omega}} \right].
\end{align*}
Therefore,
\begin{align}
    B&=\int_{\partial B_R} \frac{\partial \mathcal{F}}{\partial\eta_y} \log \frac{1}{|x-y|}d\sigma_y\\
    &=  \frac{\Phi'(R)}{2}\int_0^{2\pi} e^{ im\omega}\left[2\log \frac{1}{R}+\log \frac{1}{1-\frac{re^{-i\theta}}{R}e^{i\omega}}+\log \frac{1}{1-\frac{re^{i\theta}}{R}e^{-i\omega}} \right]d\omega.
\end{align}
We have
\begin{align}
    \int_0^{2\pi} e^{ im\omega}2\log \frac{1}{R}d\omega&=\begin{cases}
        4\pi \log \frac{1}{R}&  m=0,\\
        0& m\in\N
    \end{cases}
\end{align}
Noting that, 
\begin{equation*}
    \log\frac{1}{1-z}=z+\frac{z^2}{2}+\frac{z^3}{3}+\cdots,
\end{equation*}
we easily obtain
\begin{align}
    \int_0^{2\pi} e^{ im\omega} \log \frac{1}{1-\frac{re^{-i\theta}}{R}e^{i\omega}}d\omega&=0,\text{ for all } m\in\N_0,
\end{align}
and
\begin{align}
    \int_0^{2\pi} e^{ im\omega} \log \frac{1}{1-\frac{re^{i\theta}}{R}e^{-i\omega}}d\omega=\begin{cases}
     \frac{2\pi}{m}  \left(\frac{re^{i\theta}}{R}\right)^m  & \text{ if } m\in\N, \\
     0       & \text{ if } m=0.\end{cases}
\end{align}
Therefore, we obtain
\begin{align}\label{I3}
    B=\pi\Phi'(R)\begin{cases}
        2 \log \frac{1}{R} & \text{ if } m= 0,\\
      \frac{1}{m}\left(\frac{re^{i\theta}}{R}\right)^m & \text{ if } m\in\N.
    \end{cases}
\end{align}
For each $x\in B_R$, combining $\eqref{I2}$, and $\eqref{I3}$, we get
\begin{equation}\label{A_B_compute}
    A-B=\begin{cases}
             -\frac{2\pi}{R} [\Phi(R)-R\log R\Phi'(R)], & m=0,\\
             -\frac{\pi}{mR}\left(\frac{re^{i\theta}}{R}\right)^m [m\Phi(R)+R\Phi'(R)], &m\in\N.
         \end{cases}
\end{equation}
Notice that $\Phi(r)\cos (m\theta)=Re\, \mathcal{F}$ is also  an eigenfunction of $\Lom$ corresponding to $\tau$ and hence satisfies \eqref{ef_bdry}. Consequently, $Re \, (A-B)=0$ and by taking $x\neq 0$, we obtain two transcendental equations as follows:
\begin{equation}
    \Phi(R)-R\log R \Phi'(R)=0,\quad \text{ for } m=0,
\end{equation}
and 
\begin{equation}
    m\Phi(R)+R\Phi'(R)=0.\quad \text{ for }m\in\N.
\end{equation}

Conversely, assume that $\Phi$ satisfies $\eqref{slproblemradial}$ for some $\tau$ and $m$ and also satisfies the associated boundary condition $\eqref{m=0}$ or $\eqref{m-1onwards}$. Then, it is clear that, for $u_m(r,\theta)=\Phi(r)\cos (m\theta)$ satisfies \eqref{lap_u=la_u}. For $\mathcal{F}(r,\theta)=\Phi(r)e^{im\theta}$, from $\eqref{A_B_compute}$, together with $\eqref{m=0}$ or $\eqref{m-1onwards}$ we have $Re \, (A-B)=0$. Therefore $u_m(r,\theta)$ satisfies \eqref{ef_bdry}. Now, the proof follows from Lemma \ref{lapla_ef}. 
    
\end{proof}

\begin{remark}\label{complete_eigensystem}
   It can be observed that,  $u_{m,n}(x)=u_{m,n}(r,\theta)=\Phi_{m,n}(r)\cos (m\theta),m\in\N_0$ and  similarly, $v_{m,n}(x)=v_{m,n}(r,\theta)=\Phi_{m,n}(r)\sin (m\theta),m\in\N$ are eigenfunctions of $\Lom$ on $B_R$.  To conclude that these are the only eigenfunctions, it suffices to show that these functions form a complete orthogonal system for $L^2(B_R)$. We adopt a similar approach to the one outlined in \cite[p.56]{methods_of_math_physics}. The functions $\{1,\cos(m\theta),\sin(m\theta)\}$ forms an orthogonal basis for $L^2(0,2\pi)$. For $m=0$, the functions $\Phi_{0,n}(r)$ satisfying the Sturm-Liouville eigenvalue problem on $(0,R)$:
\begin{align*}
    \frac{d}{dr}(r\Phi')+\frac{r}{\tau} \Phi=0,\\
     \Phi(R)-R\ln R \Phi'(R)=0.
\end{align*}
For $m\neq 0$, the functions  $\Phi_{m,n}(r)$ satisfying 
\begin{align*}
    \frac{d}{dr}(r\Phi')+\left(\frac{r}{\tau}-\frac{m^2}{r} \right)\Phi=0,\\
    \quad m\Phi(R)+R \Phi'(R)=0.
\end{align*}
 For each $m\in\N_0$, $\Phi_{m,n}(r)$ constitute an orthogonal basis for $L^2[(0, R),rdr]$.  
\par For any $f\in L^2(B_R)$, let $g_r(\theta):=f(r,\theta)$ for a fixed $r$. As a consequence of Fubini’s theorem, $g_r\in L^2(0,2\pi)$ and we can express $f$ as follows:
\begin{equation*}
    f(x)=f(r,\theta)=\sum_{m\in\N_0} a_m(r)\cos (m\theta)+\sum_{m\in\N} b_m(r)\sin (m\theta), \quad 
\end{equation*}
where 
\begin{equation*}
    a_m(r)=\int_0^{2\pi} g_r(\theta)\cos (m\theta) d\theta\text{ and }
    b_m(r)=\int_0^{2\pi} g_r(\theta)\sin (m\theta) d\theta.
\end{equation*}
 Since $f\in L^2(B_R)$, from Fubini's theorem we have $a_m(r),b_m(r)\in L^2(0,R)$ , we can write $$a_m(r)=\sum_{n\in\N} c_{m,n} \Phi_{m,n}(r) .$$ Therefore, the products $\{\Phi_{m,n}(r)\cos(m\theta):m\in \N_0\}\cup\{\Phi_{m,n}(r)\sin(m\theta):m\in\N\} $  forms an orthogonal system for $L^2(B_R)$.
\end{remark}

\noindent We need the following proposition for proving Theorem \ref{radial}.
\begin{proposition}\label{bessel_prop_unique}

\begin{enumerate}[(i)]
    \item Let $I_0$ be the modified Bessel function of order zero. Then, for each  $\alpha>0$,  the function $I_0(t)-\alpha t I_0'(t)$ vanishes exactly at one point $t_\alpha \in (0,\infty)$.
    Moreover, $t_\alpha$ is monotonically decreasing.
    
    \item Let $J_0$ be the Bessel function of order zero. Then, for each  $\alpha\in \R$,  the function $J_0(t)-\alpha t J_0'(t)$ vanishes exactly once between any two zeroes of $J_0$.
\end{enumerate}
    
\end{proposition}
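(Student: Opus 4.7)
My plan is to reduce both statements to the strict monotonicity of the auxiliary quotient
\[
g_\flat(t) \coloneqq \frac{t\,F'(t)}{F(t)}
\]
with $F = I_0$ for part (i) and $F = J_0$ for part (ii). Whenever $F$ does not vanish, one has the factorisation $F(t) - \alpha t F'(t) = F(t)\bigl(1 - \alpha\, g_\flat(t)\bigr)$, so zeros of $F - \alpha t F'$ correspond precisely to solutions of $g_\flat(t) = 1/\alpha$.

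For part (i), I differentiate $g(t) = t I_0'(t)/I_0(t)$ and substitute $t I_0'' = t I_0 - I_0'$ obtained from the modified Bessel equation \eqref{modified_bessel}. This yields the clean expression
\[
g'(t) = \frac{t\bigl(I_0(t)^2 - I_0'(t)^2\bigr)}{I_0(t)^2}.
\]
The positivity $I_0(t) > I_0'(t) > 0$ on $(0,\infty)$ follows directly from the integral representation \eqref{i_o-I_1} already invoked in the paper, so $g' > 0$ on $(0,\infty)$. Combined with $g(0) = 0$ and $g(t) \to \infty$ as $t\to\infty$ (which also comes out of \eqref{i_o-I_1} since $I_0'/I_0 \to 1$), this shows $g:(0,\infty)\to(0,\infty)$ is a strictly increasing bijection. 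Hence $g(t) = 1/\alpha$ has a unique solution $t_\alpha$ for every $\alpha > 0$, and monotonicity $\alpha \mapsto t_\alpha$ is immediate: if $\alpha_1 < \alpha_2$ then $g(t_{\alpha_1}) = 1/\alpha_1 > 1/\alpha_2 = g(t_{\alpha_2})$, so $t_{\alpha_1} > t_{\alpha_2}$.

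For part (ii), I work on an interval $(j_k, j_{k+1})$ between two consecutive zeros of $J_0$, where $J_0$ does not vanish. A parallel computation, now substituting $t J_0'' = -J_0' - t J_0$ from Bessel's equation, gives
\[
\tilde{g}'(t) = -\frac{t\bigl(J_0(t)^2 + J_0'(t)^2\bigr)}{J_0(t)^2} < 0,
\]
so $\tilde{g}(t) = t J_0'(t)/J_0(t)$ is strictly decreasing on $(j_k, j_{k+1})$. Since $J_0$ has only simple zeros, $J_0'(j_k)$ and $J_0'(j_{k+1})$ are nonzero and of opposite sign; a direct sign check at the endpoints shows that $\tilde{g}$ blows up to $+\infty$ at one end and to $-\infty$ at the other. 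Consequently $\tilde{g}$ takes every real value exactly once in $(j_k, j_{k+1})$, so $\tilde{g}(t) = 1/\alpha$ has exactly one solution for each nonzero $\alpha$. The evaluation $J_0(j_k) - \alpha j_k J_0'(j_k) = -\alpha j_k J_0'(j_k) \neq 0$ for $\alpha \neq 0$ confirms that this solution is interior to $(j_k, j_{k+1})$, which is precisely the claim.

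The main obstacle is really bookkeeping: selecting the right quotient whose monotonicity encodes the desired uniqueness, and invoking the two Bessel equations exactly to cancel the second-derivative terms into expressions with a definite sign. The only genuinely nontrivial input is the positivity $I_0 > I_0' > 0$, which is supplied by the integral representation in \eqref{i_o-I_1} already established in the paper.
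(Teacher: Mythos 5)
Your proposal is correct and follows essentially the same route as the paper: both reduce the claim to strict monotonicity of the quotient $t\mapsto tF'(t)/F(t)$, compute its derivative via the respective Bessel ODE to obtain $\frac{t(I_0^2-I_0'^2)}{I_0^2}>0$ and $-\frac{t(J_0^2+J_0'^2)}{J_0^2}<0$, invoke the integral representation of $I_0$ for the sign $I_0>I_0'>0$, and argue via the endpoint limits that the quotient is a bijection onto $[0,\infty)$ (respectively onto $\R$ between consecutive zeros of $J_0$). The only cosmetic differences are that you make the factorisation $F-\alpha tF'=F(1-\alpha\,g_\flat)$ explicit, check the endpoints $J_0(j_k)-\alpha j_k J_0'(j_k)\neq 0$ directly, and deduce $g(t)\to\infty$ from $I_0'/I_0\to 1$ rather than the paper's L'H\^opital argument, while the paper writes $J_1=-J_0'$ where you keep $J_0'$.
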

\begin{proof}
 (i) Define 
 \begin{equation}\label{appendix_g(t)}
     g(t)=\frac{tI_0'(t)}{I_0(t)}.
 \end{equation}
  Clearly, the function $I_0(t)-\alpha t I_0'(t)$ vanishes at a point $t_\alpha$ if and only if $\frac{1}{\alpha}=g(t_\alpha)$. Therefore, the proof will be complete if we show that $g$ is a bijection from 
    $[0,\infty)$ to $[0,\infty).$ 
    For this, we compute 
    \begin{align*}
        g'(t)&= \frac{I_0(t)[tI_0'(t)]'-tI_0'(t)^2}{I_0(t)^2}
        =\frac{t(I_0(t)^2-I_0'(t)^2)}{I_0(t)^2}>\frac{t(I_0(t)-I_0'(t))}{I_0(t)^2},
    \end{align*}
    where the equality follows as $$[tI_0'(t)]'-tI_0(t)=tI_0''(t)+I_0'(t)-tI_0(t)=0.$$ By considering the integral expression $I_0(t)=\frac{1}{\pi} \int_{-1}^{1} \frac{e^{-t\theta}}{\sqrt{1-\theta^2}}d\theta$, see \cite[p.237]{nico_temme_bessel}, we obtain
      \begin{equation}\label{i_0-I_1}
        I_0(t)-I_0'(t)=\frac{1}{\pi} \int_{-1}^{1} \frac{(1+\theta)e^{-t\theta}}{\sqrt{1-\theta^2}}d\theta>0.
    \end{equation}
    Thus, $g$ is strictly increasing. Moreover, using the L’ Hopital’s rule and \eqref{i_0-I_1}, we obtain
    \begin{align*}
        \lim_{t\rightarrow\infty} g(t)&=\lim_{t\rightarrow\infty}\frac{tI_0'(t)}{I_0(t)}=\lim_{t\rightarrow\infty} \frac{tI_0(t)}{I_0'(t)}
        \geq\lim_{t\rightarrow\infty}t.
    \end{align*}
        Hence, $g(t)\rightarrow\infty$ as $t\rightarrow\infty$. Thus, $g$ must be a bijection from 
    $[0,\infty)$ to $[0,\infty).$ This can also be observed from Fig \ref{graphofh}.
    Let $\alpha_1<\alpha_2$. If $t_{\alpha_2}\geq t_{\alpha_1}$, then  since $g$ is increasing, we must have $g(t_{\alpha_2})\geq g(t_{\alpha_1})$ leads to the contradiction $\alpha_1\geq\alpha_2$. Thus, $t_{\alpha_2}< t_{\alpha_1}$ and hence $t_\alpha$ is monotonically decreasing.   This completes the proof.

 \vspace{5mm}
 
\noindent (ii)Let $h(t)=\frac{tJ_0'(t)}{J_0(t)}=-\frac{tJ_1(t)}{J_0(t)}$ for $t>0$ such that $J_0(t)\neq 0$. Then, 
    \begin{equation}
        h'(t)=-\frac{t\left[ J_0(t)^2+J_1(t)^2 \right]}{J_0(t)^2}<0,\text{ when } J_0(t)\neq 0.
    \end{equation}
    Consequently, $h$ is strictly decreasing from 0 to $-\infty$ between  $(0,j_{0,1})$ and strictly decreasing from $\infty$ to $-\infty$ between any two zeroes of $J_0$ as illustrated in Fig \ref{graphofg}.
    
\vspace{5mm}

\begin{minipage}[t]{0.47\textwidth} 
             \begin{center}
            \captionsetup{type=figure}
            \includegraphics[scale=0.5]{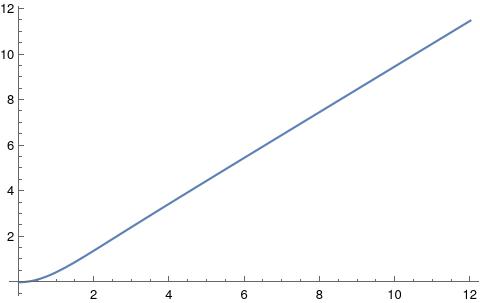}
            \captionof{figure}{Graph of $g$}\label{graphofh}
            \end{center}
     \end{minipage}    
    \begin{minipage}[t]{0.47\textwidth} 
             \begin{center}
            \captionsetup{type=figure}
            \includegraphics[scale=0.5]{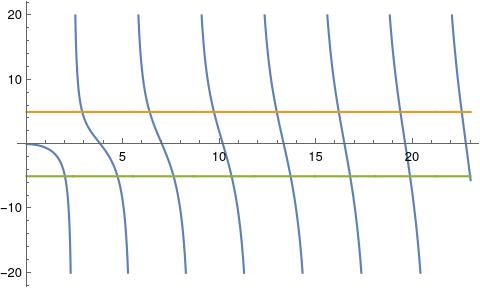}
            \captionof{figure}{Graph of $h$}\label{graphofg}
            \end{center}
     \end{minipage}

\end{proof}

\noindent Now we will prove Theorem \ref{radial}.\\
\textbf{Proof of Theorem \ref{radial}:}  Let $u$ be a radial eigenfunction corresponding to a positive eigenvalue $\tau$ of $\Lom$. Then, $u(x)=\Phi(|x|)$, where $\Phi$ satisfies the Bessel equation($m=0$ case), $$r^2\Phi''+r\Phi'+\la r^2\Phi=0,\quad (\la=\frac{1}{\tau})$$  with the boundary condition,  $$ \Phi(R)-R\log R\,\Phi'(R)=0.$$  Thus, $\Phi(r)=J_0(\sqrt{\la}r)$ and     
\begin{equation}\label{bessel_boundary}
    J_0(\sqrt{\la}R)-\sqrt{\la}R\log R J_0'(\sqrt{\la}R)=0.
\end{equation}
This equation has infinitely many roots follows from Proposition \ref{bessel_prop_unique}. Consequently,
\begin{equation}
        \tau_{0,n}(B_R)=\frac{R^2}{\mu_{0,n}(B_R)^2}\text{ for some }n\in\N,
\end{equation}
where $\mu_{0,n}(B_R)$ is the $n^{th}$ root of the  equation $J_0(t)-\log R\, t J_0'(t)=0$.  The corresponding eigenfunctions are given by 
   \begin{equation*}
        u_{0,n}(x)=J_0\left(\frac{\mu_{0,n}(B_R)}{R}|x|\right),\quad n\in\N.
    \end{equation*}    
For $R\leq 1$, all the eigenvalues are positive and hence above $\tau_{0,n}(B_R)$ gives all the radial eigenvalues. For $R>1$, it has already been proved in Theorem \ref{radial_small_version}.

\qed


\begin{theorem}\label{nonradial}
   Let $B_R$ be the disc with radius $R$. The non-radial eigenvalues $\tau_{m,n}$ on $B_R$ are given by 
   \begin{equation*}
        \tau_{m,n}(B_R)=\frac{R^2}{j_{m-1,n}^2}\text{ for }m\in\N,
    \end{equation*}
    where $j_{m,n}$ denotes the $n^\text{th}$ positive zero of the Bessel  function $J_m$. The corresponding eigenfunctions(up to a constant multiple) are given by
    \begin{equation*}
        u_{m,n}(x)=u_{m,n}(r,\theta)=J_m\left(\frac{j_{m-1,n}}{R}r\right)\cos (m\theta),\quad m, n\in\N,
    \end{equation*} 
    and
    \begin{equation*}
        v_{m,n}(x)=v_{m,n}(r,\theta)=J_m\left(\frac{j_{m-1,n}}{R}r\right)\sin (m\theta),\quad m, n\in\N.
    \end{equation*}
\end{theorem}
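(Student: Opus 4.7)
The plan is to apply Lemma \ref{ball_lemma1} with $m\in\mathbb{N}$, identify the admissible radial profiles $\Phi$, convert the boundary condition into a zero-condition on a Bessel function, and finally invoke Remark \ref{complete_eigensystem} to confirm that the list obtained is exhaustive.

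First, fix $m\in\mathbb{N}$ and suppose $\tau$ is an eigenvalue whose corresponding eigenfunction has angular dependence $\cos(m\theta)$ or $\sin(m\theta)$. By Lemma \ref{ball_lemma1}, setting $\lambda=1/\tau$, the radial profile $\Phi$ solves the Bessel equation
\begin{equation*}
r^2\Phi''+r\Phi'+(\lambda r^2-m^2)\Phi=0 \quad\text{on }(0,R),
\end{equation*}
together with the boundary condition $m\Phi(R)+R\Phi'(R)=0$. Since $\Phi$ must be bounded at $r=0$ (to yield an $L^2$ eigenfunction on $B_R$), we discard the singular solution $Y_m$ and write $\Phi(r)=cJ_m(\sqrt{\lambda}\,r)$ for some constant $c$, which forces $\lambda>0$.

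Next, I will turn the boundary condition into a zero-condition on $J_{m-1}$. Using the classical identity $tJ_m'(t)+mJ_m(t)=tJ_{m-1}(t)$, which follows at once from the recurrence relations of Bessel functions, and writing $t=\sqrt{\lambda}\,R$, the boundary condition $m\Phi(R)+R\Phi'(R)=0$ becomes
\begin{equation*}
mJ_m(t)+tJ_m'(t)=tJ_{m-1}(t)=0.
\end{equation*}
Since $t>0$, this is equivalent to $J_{m-1}(t)=0$, so $t=j_{m-1,n}$ for some $n\in\mathbb{N}$. Consequently
\begin{equation*}
\tau_{m,n}(B_R)=\frac{1}{\lambda}=\frac{R^2}{j_{m-1,n}^2}, \qquad \Phi_{m,n}(r)=J_m\!\left(\frac{j_{m-1,n}}{R}\,r\right),
\end{equation*}
and pairing $\Phi_{m,n}$ with $\cos(m\theta)$ and $\sin(m\theta)$ produces the eigenfunctions $u_{m,n}$ and $v_{m,n}$.

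It remains to show that there are no other non-radial eigenvalues. For this I will appeal to Remark \ref{complete_eigensystem}: the separated functions $\{\Phi_{m,n}(r)\cos(m\theta)\}_{m\in\mathbb{N}_0,n\in\mathbb{N}}\cup\{\Phi_{m,n}(r)\sin(m\theta)\}_{m,n\in\mathbb{N}}$ constitute an orthogonal basis of $L^2(B_R)$, so any eigenfunction of the compact self-adjoint operator $\mathcal{L}_\Omega$ must lie in the span of these. Because distinct $m$ produce orthogonal angular components and the radial Sturm--Liouville problem for each fixed $m\geq 1$ has exactly the simple eigenvalues $j_{m-1,n}^2/R^2$ computed above, every non-radial eigenvalue coincides with some $\tau_{m,n}(B_R)$. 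The only subtle point in this whole argument is ensuring that the identity $tJ_m'(t)+mJ_m(t)=tJ_{m-1}(t)$ cleanly converts the boundary condition to $J_{m-1}(t)=0$; everything else is a routine consequence of Lemma \ref{ball_lemma1} and the completeness noted in Remark \ref{complete_eigensystem}.
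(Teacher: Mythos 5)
Your proof follows the paper's route exactly: apply Lemma \ref{ball_lemma1} for $m\in\N$, identify $\Phi=J_m(\sqrt{\lambda}\,r)$, convert the Robin-type boundary condition into $J_{m-1}(\sqrt{\lambda}R)=0$ via the recurrence $tJ_m'(t)+mJ_m(t)=tJ_{m-1}(t)$, and invoke Remark \ref{complete_eigensystem} for exhaustiveness. Your added remarks about discarding the singular solution $Y_m$ and cancelling the factor $t>0$ are correct elaborations of what the paper leaves implicit, so this is the same argument.
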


\begin{proof}
        Let $\tau$ be a non-radial eigenvalue of $\Lom$ and  $u$ be the corresponding eigenfunction. It follows that $m\neq 0$ by Remark \ref{complete_eigensystem}. Then, by Lemma \ref{ball_lemma1}, there exists an $m\in\N$ and $\Phi$ satisfying the Bessel equation of order m, $$r^2\Phi''+r\Phi'+(\la r^2-m^2)\Phi=0,\quad \left(\la=\frac{1}{\tau}\right)$$  with the boundary condition: 
        \begin{equation}\label{m1onwards}
            m\Phi(R)+R\Phi'(R)=0.
        \end{equation}
        Consequently,
        \begin{equation}\label{phiformnonrad}
            u_m(x)=u_m(r,\theta)=J_m(\sqrt{\la}r)\cos(m\theta), m\in \N,
        \end{equation}
        and 
        \begin{equation}
            v_m(x)=v_m(r,\theta)=J_m(\sqrt{\la}r)\sin(m\theta), m\in \N.
        \end{equation}
         From $\eqref{m1onwards}$ and by using the recurrence relation of Bessel functions\cite[formula (3)-p.18]{watson_Bessel} we get 
         \begin{equation}
             mJ_{m}(\sqrt{\la}R)+\sqrt{\la} RJ_{m}'(\sqrt{\la}R)=\sqrt{\la}R J_{m-1}(\sqrt{\la} R)=0.
         \end{equation}
        Moreover, $J_{m-1}(\sqrt{\la} R)=0$. Hence, for each $m\in\N$,    
        \begin{equation*}
                \tau_{m,n}(B_R)=\frac{R^2}{j_{m-1,n}^2},\,\; n\in\N,
        \end{equation*}
        where $j_{m,n}$ denotes the $n^\text{th}$ positive zero of the Bessel  function $J_m$. The corresponding eigenfunctions are given by 
         \begin{equation*}
        u_{m,n}(x)=u_{m,n}(r,\theta)=J_m\left(\frac{j_{m-1,n}}{R}r\right)\cos (m\theta),\quad m, n\in\N,
    \end{equation*} 
    and
    \begin{equation*}
        v_{m,n}(x)=v_{m,n}(r,\theta)=J_m\left(\frac{j_{m-1,n}}{R}r\right)\sin (m\theta),\quad m, n\in\N.
    \end{equation*}

 \end{proof}


\begin{theorem}\label{summary}
    Let $B_R$ be a disc of radius $R$.
     \begin{enumerate}[(I)]
         \item If $R=1$, then $\tau_{0,n}(B_1)=\frac{1}{j_{0,n}^2},\,\; n\in\N$.
         \item If $R<1$, then $$ \frac{R^2}{j_{0,n}^2}<\tau_{0,n}(B_R)<\frac{R^2}{j_{0,n-1}^2} ,\text{ for }n\geq 2,$$ and
         $$\frac{R^2}{j_{0,1}^2}<\tau_{0,1}(B_R).  $$  Furthermore,
            
            \begin{equation}
                \tau_{0,1}(B_R)\approx\frac{R^2|\log\, R|}{2}  \text{ when $R$ is near } 0.
            \end{equation}
            \item If $R>1$, then $$\frac{R^2}{j_{0,n+1}^2}<\tau_{0,n}(B_R)<\frac{R^2}{j_{0,n}^2},\,\; n\in\N.$$
            Moreover,  
        \begin{equation*}
        \Tilde{\tau}_1(B_R)\approx\begin{cases} -\frac{R^2 \,\log\, R}{2} 
                    & \text{ when  $R$ is near } 1,\\
            -R^2(\log\, R)^2 & \text{ when $R$ is near $\infty$} .
        \end{cases}
    \end{equation*}
         
            
        

    \item The non-radial eigenvalues $\tau_{m,n}(B_R)=R^2\tau_{m,n}(B_1),\,\,m,n\in\N$.
    \end{enumerate}
\end{theorem}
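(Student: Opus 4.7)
The plan: Parts (I) and (IV) are immediate from the explicit formulas already established. For (I), substituting $R=1$ into the equation $J_0(t)-\log R\cdot tJ_0'(t)=0$ defining $\mu_{0,n}(B_R)$ (Theorem~\ref{radial}) collapses it to $J_0(t)=0$, so $\mu_{0,n}(B_1)=j_{0,n}$ and hence $\tau_{0,n}(B_1)=1/j_{0,n}^2$. For (IV), Theorem~\ref{nonradial} gives $\tau_{m,n}(B_R)=R^2/j_{m-1,n}^2$ with $j_{m-1,n}$ independent of $R$, so $\tau_{m,n}(B_R)=R^2\tau_{m,n}(B_1)$.

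For the interval bounds in (II) and (III), the strategy is to reformulate the defining equation as $h(t)=1/\log R$, where $h(t):=tJ_0'(t)/J_0(t)$, and then locate each solution using the interval-by-interval monotonicity of $h$ established in the proof of Proposition~\ref{bessel_prop_unique}(ii). Recall that $h$ strictly decreases from $0$ to $-\infty$ on $(0,j_{0,1})$ and from $+\infty$ to $-\infty$ on each $(j_{0,k},j_{0,k+1})$. When $R<1$, $1/\log R<0$: there is one solution in $(0,j_{0,1})$ and one in each $(j_{0,k},j_{0,k+1})$, so $\mu_{0,1}(B_R)\in(0,j_{0,1})$ and $\mu_{0,n}(B_R)\in(j_{0,n-1},j_{0,n})$ for $n\ge 2$; inverting via $\tau_{0,n}=R^2/\mu_{0,n}^2$ gives the bounds in (II). When $R>1$, $1/\log R>0$ is not attained on $(0,j_{0,1})$ (where $h<0$), but is attained exactly once on each $(j_{0,k},j_{0,k+1})$, so $\mu_{0,n}(B_R)\in(j_{0,n},j_{0,n+1})$, yielding the bounds in (III).

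The asymptotic $\tau_{0,1}(B_R)\approx R^2|\log R|/2$ in (II) follows from a local expansion of $h$ at the origin. From $J_0(t)=1-t^2/4+O(t^4)$ and $tJ_0'(t)=-t^2/2+O(t^4)$, one gets $h(t)=-t^2/2+O(t^4)$. Since $1/\log R\to 0^-$ as $R\to 0^+$ and $h$ is a bijection of $(0,j_{0,1})$ onto $(-\infty,0)$, the unique solution $\mu_{0,1}(B_R)$ tends to $0$; substituting into $h(\mu_{0,1}(B_R))=1/\log R$ gives $\mu_{0,1}(B_R)^2\approx 2/|\log R|$, and hence $\tau_{0,1}(B_R)\approx R^2|\log R|/2$. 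The two asymptotics of $\Tilde{\tau}_1(B_R)$ in (III) are direct restatements of Theorem~\ref{asymptotic_small_version} (I would note in passing that the two branches as printed in (III) appear transposed relative to Theorem~\ref{asymptotic_small_version}, and I would cite the latter for the actual asymptotic analysis based on $g(t)=tI_0'(t)/I_0(t)$).

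Nothing here presents a serious obstacle: the whole argument rests on the monotonicity/bijection properties of $g$ and $h$ from Proposition~\ref{bessel_prop_unique} and on Taylor expansion of (modified) Bessel functions near $0$, together with direct substitution into the formulas proved in Theorem~\ref{radial}, Theorem~\ref{radial_small_version}, Theorem~\ref{nonradial}, and Theorem~\ref{asymptotic_small_version}. The one mildly delicate verification is that $\mu_{0,1}(B_R)\to 0$ as $R\to 0^+$ (which justifies replacing $h$ by its small-$t$ expansion); this follows from the bijection of $h$ on $(0,j_{0,1})$ onto $(-\infty,0)$ together with $1/\log R\to 0^-$.
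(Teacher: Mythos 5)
Your argument is correct and follows the paper's own route precisely: both reduce the transcendental boundary condition to $h(\mu_{0,n}(B_R)) = 1/\log R$ with $h(t) = tJ_0'(t)/J_0(t) = -tJ_1(t)/J_0(t)$, locate the roots via the interval-by-interval monotonicity of $h$ from Proposition~\ref{bessel_prop_unique}(ii), and obtain the $R\to 0^+$ asymptotic from the small-$t$ expansion $h(t)\approx -t^2/2$ together with the observation that $\mu_{0,1}(B_R)\to 0$. Your observation that the two branches of $\Tilde{\tau}_1(B_R)$ as printed in part~(III) are transposed relative to Theorem~\ref{asymptotic_small_version} is correct: the proof of that theorem gives $\mu_0(B_R)\approx 1/\log R$ near $R=1$ and $\mu_0(B_R)\approx \sqrt{2/\log R}$ near $R=\infty$, so Theorem~\ref{asymptotic_small_version} is the correct statement and part~(III) of the present theorem contains a typographical error.
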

            
        

\begin{proof}
\noindent (I) From Theorem \ref{radial}, for $R=1$, the radial eigenvalues are $\tau_{0,n}(B_1)=\frac{1}{j_{0,n}^2},\,\;n\in\N$.

\vspace{5 mm}

\par  For an arbitrary  $R$, the positive radial eigenvalues are $\tau_{0,n}(B_R)=\frac{R^2}{(\mu_{0,n}(B_R))^2}$ where $\mu_{0,n}(B_R)$ are the $n^\text{th}$ root of the equation $J_0(t)-\log R t J_0'(t)=0\,(\text{or }J_0(t)+\log R t J_1(t)=0)$. Recall,
        \begin{equation*}
            h(t)=-\frac{t J_1(t)}{J_0(t)}, \,\, \text{for }t\text{ such that }J_0(t)\neq 0.
        \end{equation*}
     \noindent As mentioned in Proposition \ref{bessel_prop_unique}, $h$ is strictly decreasing from $+\infty$ to $-\infty$ between any two zeroes of $J_0$(See Fig.\ref{graphofg}). We have $\mu_{0,n}(B_R)$ satisfies, $$h(\mu_{0,n}(B_R))=-\frac{\mu_{0,n}(B_R)J_1(\mu_{0,n}(B_R))}{J_0(\mu_{0,n}(B_R))}=\frac{1}{\log\, R}.$$ Thus $\mu_{0,n}(B_R)$ are precisely the $n^\text{th}$ points where the horizontal line  $\frac{1}{\log\, R}$ intersects the graph of $h$(See Fig.\ref{graphofg}).

     \vspace{5mm}

    \noindent (II) For $R<1$, $0<\mu_{0,1}(B_R)<j_{0,1}$ and $j_{0,n-1}<\mu_{0,n}(B_R)<j_{0,n},n\geq 2$. Since $h(t)=-\frac{t J_1(t)}{J_0(t)}\approx -\frac{t^2}{2}$ when $t\text{ is near to } 0$, we have, $\mu_{0,1}(B_R)\approx \sqrt{\frac{2}{|\log\, R|}}$ when $R$ is near to $0$.

             \begin{center}
            \captionsetup{type=figure}
            \includegraphics[scale=0.5]{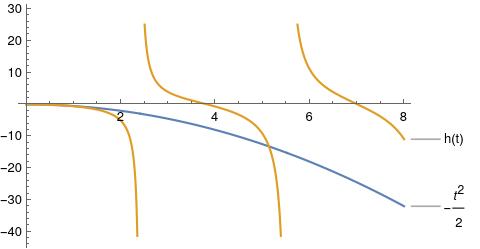}
            \captionof{figure}{Graph of $h$ and $-\frac{t^2}{2}$}\label{handt2by2}
            \end{center}

     \vspace{5 mm}
     
\noindent (III) For $R>1$, from the previous arguments we have $j_{0,n}<\mu_{0,n}(B_R)<j_{0,n+1},\,\; n\in\N$. The asymptotic behaviour of the unique negative eigenvalue $\Tilde{\tau}_1(B_R)$ is established in the proof of Theorem \ref{asymptotic_small_version}.


    \vspace{5 mm}

 \noindent (IV) Follows directly from Theorem \ref{nonradial}.
\end{proof}

\section{First, Second, and Third eigenvalues on \texorpdfstring{$B_R$}{BR}}\label{AppendixB}


In this section, we will examine the multiplicity and radiality of the first three largest eigenvalues $\tau_1(B_R)\geq\tau_2(B_R)\geq \tau_3(B_R)$ of $\Lom$ on $B_R$. The positive eigenvalues on $B_R$ can be enumerated by ordering  $\mu_{0,n},\,n\in\N$ and $j_{m,n},\,n\in\N$. 
\begin{center}
    $\begin{matrix}
    \mu_{0,1}(B_R) & < &  \mu_{0,2}(B_R) &< & \mu_{0,3}(B_R)&< &\cdots&\text{    (depends on } R)\\
    \hfill \\
    j_{0,1}&< & j_{0,2}&<& j_{0,3}&<&\cdots \\
    \wedge & & \wedge& & \wedge\\
    j_{1,1}& < & j_{1,2}& <& j_{1,3}& <&\cdots \\
    \wedge& & \wedge& & \wedge\\
    \vdots & &\vdots & &\vdots & &\ddots 
    \end{matrix}    $
\end{center}
Now, recall the function $h(t)=  -\frac{tJ_1(t)}{J_0(t)}$. Notice that, $h$ is negative in $(0\, ,\,j_{0,1})$ and strictly decreasing from $+\infty$ to $-\infty$ in $(j_{0,1}\, ,\, j_{0,2})$. 
 \begin{center}
            \captionsetup{type=figure}
            \includegraphics[scale=0.6]{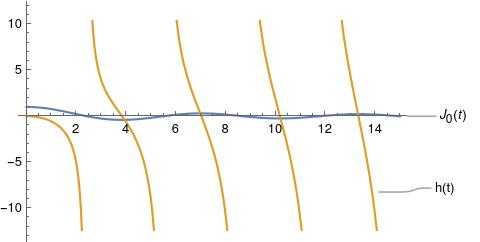}
            \captionof{figure}{Graph of $J_0$ and $h$}\label{besselgraphmultiplicity}
\end{center}

\noindent\underline{\textbf{On a disc with $R<1$:}}  We have, $$h(\mu_{0,1}(B_R))=\frac{1}{\log\, R}<0.$$ 
      Consequently, $\mu_{0,1}(B_R)<j_{0,1}$. Thus, we obtain the following ordering:
      \begin{equation*}
    \begin{matrix}
    \mu_{0,1}(B_R) & < & \fbox{$ \mu_{0,2}(B_R)$} &< & \mu_{0,3}(B_R)&< &\cdots&\text{    (depends on } R)\\
    \wedge& & \wedge& & \wedge\\
    \fbox{$j_{0,1}$}&< & j_{0,2}&<& j_{0,3}&<&\cdots \\
    \wedge & & \wedge& & \wedge\\
    j_{1,1}& < & j_{1,2}& <& j_{1,3}& <&\cdots \\
    \wedge& & \wedge& & \wedge\\
    \vdots & &\vdots & &\vdots & &\ddots 
    \end{matrix}    
    \end{equation*}
      Therefore,      
      $$\tau_{1}(B_R)=\tau_{0,1}(B_R)=\frac{R^2}{(\mu_{0,1}(B_R))^2},$$  and from Theorem \ref{radial}, $\tau_1(B_R)$ admits only radial eigenfunctios  and its multiplicity is 1 . The second eigenvalue can be determined by 
\begin{equation}\label{tau_2new}
    \tau_2(B_R)=\sup \{\tau_{m,n}(B_R): \tau_{1}(B_R)>\tau_{m,n}(B_R), m\in \N_0, n\in\N \}.
\end{equation}
In the proof of Theorem \ref{summary}(II), we mentioned that $0<\mu_{0,1}(B_R)<j_{0,1}<\mu_{0,2}(B_R)<j_{0,2}$. Therefore,
\begin{equation}
\tau_2(B_R)=\tau_3(B_R)=\tau_{1,1}(B_R)=\frac{R^2}{j_{0,1}^2}.
\end{equation}
Thus, second and third eigenvalues admit only non-radial eigenfunctions.
\\

\noindent\underline{\textbf{On $B_1$:} }
From Theorem \ref{radial}, since $\log R =0$, it follows that $\mu_{0,n}(B_1)=j_{0,n}$. Thus, 
\begin{center}
    $\begin{matrix}
    \mu_{0,1}(B_1) & < &  \mu_{0,2}(B_1) &< & \mu_{0,3}(B_1)&< &\cdots\\
    \shortparallel & & \shortparallel & &\shortparallel \\
    j_{0,1}&< & j_{0,2}&<& j_{0,3}&<&\cdots \\
    \wedge& & \wedge& & \wedge \\
    j_{1,1}& < & j_{1,2}& <& j_{1,3}& <&\cdots \\
    \wedge& & \wedge& & \wedge\\
    \vdots & &\vdots & &\vdots & &\ddots 
    \end{matrix}    $
\end{center}
Therefore, all radial eigenvalues admit non-radial eigenfunctions and have multiplicity exactly 3\cite[Theorem 3.1]{anderson}. In particular,
     $$\tau_1(B_1)=\tau_2(B_1)=\tau_3(B_1)=\tau_{0,1}(B_1)=\tau_{1,1}(B_1)=\frac{1}{j_{0,1}^2}.$$     
\\

\noindent\underline{\textbf{On a disc with $R>1$:} } We have,
      $$h(\mu_{0,1}(B_R))=\frac{1}{\log\, R}>0.$$            
      \noindent This implies that $j_{0,1}<\mu_{0,1}(B_R)<j_{0,2}$. Therefore, 
     \begin{equation*}
    \begin{matrix}
    \fbox{$\mu_{0,1}(B_R)$} & < &  \mu_{0,2}(B_R)  &< & \mu_{0,3}(B_R)&< &\cdots&\text{    (depends on } R)\\
    \vee & & \vee& & \vee\\
    j_{0,1}&< & \fbox{$j_{0,2}$}&<& j_{0,3}&<&\cdots \\
    \wedge & & \wedge& & \wedge\\
    \fbox{$j_{1,1}$}& < & j_{1,2}& <& j_{1,3}& <&\cdots \\
    \wedge& & \wedge& & \wedge\\
    \vdots & &\vdots & &\vdots & &\ddots 
    \end{matrix}    
\end{equation*}
By Theorem \ref{nonradial}, $$\tau_1(B_R)=\tau_2(B_R)=\tau_{1,1}(B_R)=\frac{R^2}{j_{0,1}^2},$$    
and these eigenvalues admit only non-radial eigenfunctions. The third eigenvalue is given by:
\begin{equation}\label{tau_3new}
    \tau_3(B_R)=\sup \{\tau_{m,n}(B_R): \tau_{1}(B_R)>\tau_{m,n}(B_R) \}.
\end{equation}
Consider the function $f(t)=-\frac{J_0(t)}{tJ_1(t)}$. Similarly to earlier discussions, it can be shown that $f$ is strictly increasing from $-\infty$ to $\infty$ between any two zeroes of $J_1$. Notice that, $f(\mu_{0,1}(B_R))=\log\, R\, >\,0$.
\begin{center}
\captionsetup{type=figure}
    \includegraphics[scale=0.8]{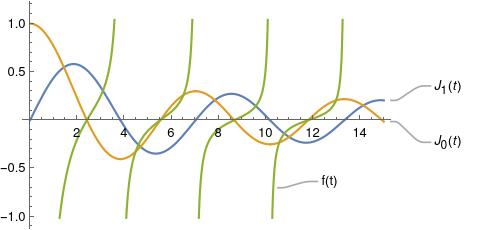}
    \captionof{figure}{Graph of $J_0,\, J_1$ and $f$}
\end{center}
\noindent It follows that $\mu_{0,1}(B_R)<j_{1,1}\approx 3.8317 < j_{0,2}\approx 5.5201$. Hence, the third eigenvalue $\tau_3(B_R)=\frac{R^2}{(\mu_{0,1}(B_R))^2}$, which admits only  radial eigenfunctions.

\bibliographystyle{abbrvurl}
\bibliography{Reference}

\end{document}